\documentclass[11pt,reqno]{amsart}
\usepackage[T1]{fontenc}
\usepackage{amsmath,esint}
\usepackage{amsthm}
\usepackage{amsfonts}
\usepackage{enumerate}

\usepackage{arydshln}
\usepackage{amssymb}
\usepackage{a4wide}

\newcommand{\bea}{\begin{eqnarray}}
\newcommand{\eea}{\end{eqnarray}}	
\newcommand{\bee}{\begin{eqnarray*}}
\newcommand{\eee}{\end{eqnarray*}}
\newcommand\NN{\mathbb{N}}

\newcommand\RR{\mathbb{R}}

\newcommand\OO{\mathcal{O}}

\newcommand\EE{\mathcal{E}}
\newcommand\WW{\mathbf{W}}

\newcommand\e{\epsilon}
\newcommand\be{\begin{equation}}
\newcommand\ee{\end{equation}}

\newcommand\la{\langle}
\newcommand\ra{\rangle}

\newcommand\p{\partial}

\newcommand\im{\mathrm{\, Im\,}}

\newcommand\R{\RR}

\newtheorem*{theorem*}{Main Theorem} 
\newtheorem{theorem}{Theorem}
\newtheorem{lemma}[theorem]{Lemma}

\newtheorem{proposition}[theorem]{Proposition}
\newtheorem{remark}{Remark}
\numberwithin{equation}{section}

\title[Multi-solitons with logarithmic distance]{Strongly interacting multi-solitons with logarithmic relative distance for gKdV equation}
\author[T.V. Nguy\~{\^e}n]{Nguy\~{\^e}n Ti\'{\^e}n Vinh}
\address{CMLS, \'{E}cole Polytechnique, CNRS, Universit\'{e} Paris-Saclay, 91128 Palaiseau, France}
\email{tien-vinh.nguyen@polytechnique.edu}
\begin{document}
\begin{abstract}
We consider the following class of equations of (gKdV) type
$$\p_t u + \p_x (\p_x^2 u + |u|^{p-1}u) = 0, \quad p\mbox{ integer},\quad t,x \in \R$$
with mass sub-critical ($2< p<5$) and mass super-critical nonlinearities ($p> 5$). We prove the existence of 2-solitary wave solutions with logarithmic relative distance, i.e. solutions $u(t)$ satisfying
\[\left\|u(t)- \bigg( Q (\cdot - t - \log (ct)) + \sigma Q (\cdot -  t + \log (ct))\bigg)\right\|_{H^1}\to 0 \ \ \mbox{as} \ \ t\to +\infty,\]
where $c=c(p)> 0$ is a fixed constant, $\sigma = -1$ in sub-critical cases and $\sigma = 1$ in super-critical cases. For the integrable case ($p=3$), such solution was known by integrability theory. This regime corresponds to strong attractive interactions. For sub-critical $p$, it was known that opposite sign traveling waves are attractive. For super-critical $p$, we derive from our computations that same sign traveling waves are attractive. 
\end{abstract}
\maketitle
\section{Introduction}
We consider the following class of equations of (gKdV) type
\be\label{gkdv}
\begin{cases}
\p_t u + \p_x(\p_x^2 u + |u|^{p-1}u) = 0 ,\qquad t,x \in \RR\\
u(0,x) = u_0 (x), \qquad u_0 \in H^1 : \RR \to \RR
\end{cases}
\ee
where $p > 2$ is an integer. When $p$ is odd $(p=3,5,7,...)$, \eqref{gkdv} is the usual generalized Korteweg-de Vries equations (gKdV). In particular, the case $p=3$ corresponds to the modified Korteweg-de Vries (mKdV) equation, which is a completely integrable model \cite{PCS}.

From \cite{KPV}, the Cauchy problem for \eqref{gkdv} is well-posed: for all $u_0 \in H^1(\RR)$, there exists $T^* > 0$ and a solution $u \in \mathcal{C} ([0, T^*], H^1(\RR))$ to \eqref{gkdv}, unique in some class $Y_{T^*} \subset \mathcal{C} ([0, T^*), H^1(\RR))$. Moreover, the following blow up criterion holds:
\[T^* < + \infty \,\,\mbox{  implies  } \lim_{t \to T^*} \| u(t)\|_{H^1} = + \infty.\]
Recall that such $H^1$ solutions $u(t)$ satisfy the conservation of mass and energy:
\begin{equation*}
\begin{gathered}
M(u(t)) = \int u^2(t,x) dx = M_0, \\\quad E(u(t)) = \frac{1}{2} \int (\p_x u)^2(t,x) dx - \frac{1}{p+1} \int |u|^{p+1}(t,x) dx =E_0.
\end{gathered}
\end{equation*}
For $p<5$, all $H^1$ solutions are global in time, as a consequence of the conservation laws and the Gagliardo-Nirenberg inequality
\[\|u\|_{L^{p+1}} \lesssim \|u\|_{L^2}^{1-\sigma} \|\p_x u\|^\sigma_{L^2}\quad \mbox{with} \quad \sigma  = \frac{p-1}{2(p+1)}.\]
For $p=5$, called the critical case, blow-up in finite time is possible (see e.g. \cite{MMR1}).

A very important feature of these equations is the existence of traveling wave solutions, usually called solitons, of the form
\[R_{v_0,x_0} = Q_{v_0} (x-x_0 - v_0t)\]
where $Q_v = v^{\frac{1}{p-1}} Q (\sqrt{v} x)$ and $Q$ is the ground state solitary wave, i.e. the unique positive even solution of the equation
\[Q'' + Q^p = Q, \qquad Q (x) = \left(\frac{p+1}{2 \cosh^2\left( \frac{p-1}{2}x\right)} \right)^{\frac{1}{p-1}}.\]
Recall that for sub-critical cases ($p<5$), the solitons are stable and asymptotically stable in $H^1$ in some sense (see \cite{CL}, \cite{MMarch}, \cite{PW}, \cite{We86}) while for critical and super-critical cases ($p\geq 5$), the solitons are unstable (see \cite{VC1}, \cite{GSS}, \cite{MMR1}, \cite{PW2}). 
\subsection{Main results} In this article, we construct a 2-solitary wave solution with logarithmic relative distance.

\begin{theorem*}[Multi-solitons with logarithmic distance]
\label{main:th} Let $p$ integer, $p \neq 5$ and $p>2$. There exist $t_0 >0$ and an $H^1$ solution on $[t_0, + \infty)$ of~\eqref{gkdv}
which decomposes asymptotically into two solitary waves 
\begin{equation}\label{main:eq}
\left\|u(t)- \bigg( Q (\cdot - t - \log (ct)) + \sigma Q (\cdot -  t + \log (ct))\bigg)\right\|_{H^1}\to 0 \ \ \mbox{as} \ \ t\to +\infty,
\end{equation}
where
\begin{equation}\label{c:eq}
c= c(p)=  \sqrt{\frac{ 8(p-1)}{|5-p|}} \left(2p+2\right)^{\frac{1}{p-1}} \|Q\|_{L^2}^{-1} > 0 ,
\end{equation} 
$\sigma = -1$ in sub-critical cases ($2< p<5$) and $\sigma = 1$ in super-critical cases ($p>5$).
\end{theorem*}

In sub-critical cases, \cite{Mi03} proved that the interaction of two solitons of same sign is repulsive. The regime displayed in Main Theorem corresponds to attractive interaction between solitons and thus $\sigma = -1$ for $p<5$. For the integrable case ($p = 3$), the existence of "double pole solutions", two solitons with alternative sign corresponding to the regime in Main Theorem, was reported in \cite{OW} by using inverse scattering transform (see also Remark \ref{rk3}). 

In super-critical cases, we derive from our computations that the interaction between two solitons with the same sign is attractive which explains that $\sigma = 1$ for $p>5$. In particular, we can apply the strategy of this article to construct multi-solitary waves with logarithmic relative distance for super-critical (gKdV) $\p_t u + \p_x(\p_x^2 u + u^p) = 0$ with $p$ even ($p=6,8,...$).

We observe that the relative distance of the two traveling waves is $2 \log (ct)$ with $c$ given in \eqref{c:eq}. We expect that this is the unique regime of logarithmic relative distance for this scaling.

\smallskip
We point out similarity with the result proved by the author in \cite{NV1} for nonlinear Schr\"odinger equations $i \p_t u + \Delta u + |u|^{p-1}u = 0$: for any dimension $d \geq 1$ and any $H^1$ sub-critical nonlinearity $p$, except the $L^2$ critical one $p = 1+\frac{4}{d}$, there exists an $H^1$ solution $u(t)$ which decomposes asymptotically
\[\left\|u(t)- e^{i\gamma(t)}\sum_{k=1}^2 Q(.-x_k(t))\right\|_{H^1 (\R^d)} \lesssim \frac{1}{t}
\]
where $x_1(t) = - x_2(t)$ and $|x_1(t) - x_2(t)| = 2(1 + o(1))\log t\ \mbox{as} \ t\to +\infty.$

\begin{remark} Our result holds in both mass sub-critical ($p<5$) and mass super-critical cases ($p>5$). For the mass critical case $p=5$, we conjecture that solution such as in Main Theorem does not exist. Indeed, as for nonlinear Schr\"odinger equations, the instability directions related to scaling should be excited by the nonlinear interactions (see \cite{MRlog}, \cite{NV1}). 

By scaling and translation, for any $v >0$ and $x_0\in \R$, there exists an $H^1$ solution such that as $t\to +\infty$
\[\left\|u(t)- \bigg( Q_v (\cdot - x_0- vt - \frac{1}{\sqrt{v}}\log (cv^{\frac{3}{2}}t)) + \sigma Q_v (\cdot - x_0 -  vt + \frac{1}{\sqrt{v}}\log (cv^{\frac{3}{2}}t))\bigg)\right\|_{H^1}\to 0.\]
It is mandatory that the scaling $v$ is the same for both solitons since otherwise they would have different speeds, see next remark.
\end{remark}
\begin{remark} In our main result, the logarithmic distance is due to strong attractive interaction between the two solitary waves. This is in contrast with most previous works on multi-solitary waves of (gKdV) where weak interactions do not change the behavior of solitons, see in particular \cite{CMM}, \cite{VC2}, \cite{Martel1}, \cite{MMT2}. A typical example to illustrate weakly interacting dynamics is the existence of multi-soliton solutions $u(t)$ of (gKdV) with any different speeds $0 < v_1 <...<v_K$ and any $x_1,...,x_K \in \R$,
\[\left\|u(t) - \sum_{k=1}^K Q_{v_k} (\cdot - v_k t -x_k)\right\|_{H^1} \to 0, \qquad \mbox{ as } t \to +\infty.\]
Combining the construction of this paper and the construction of multi-soliton solutions with weak interactions in \cite{CMM}, \cite{Martel1}, we prove the existence of $u(t)$ such that as $t \to + \infty$
\begin{multline*}
\bigg\|u(t)- \bigg[ \sum_{k=1}^K \bigg( Q_{v_k} (\cdot - v_k t - \frac{1}{\sqrt{v_k}}\log (c v_k^{\frac{3}{2}} t)) + \sigma Q_{v_k} (\cdot -  v_k t + \frac{1}{\sqrt{v_k}}\log (c v_k^{\frac{3}{2}} t)) \bigg)\\
+ \sum_{k'=1}^{K'} Q_{w_{k'}} (\cdot - w_{k'} t) \bigg]\bigg\|_{H^1}\to 0
\end{multline*}
for any $v_k, w_{k'} \neq 1$, $0<v_1 <...<v_K$, $0<w_1 <...<w_{K'}$ and $v_k \neq w_{k'}$. Since configurations of 2-soliton with logarithmic distance like in \eqref{main:eq} are determined by its strong interaction, which will not be affected by weak interactions.
\end{remark}
\begin{remark}\label{rk3} It is informative to observe the asymptotic form of the double pole solution in \cite{OW} as $t \to \pm \infty$ to remark the perfect interaction of the solitons in integrable case.  For $p=3$, the ground state solitary wave is $Q (x) = \sqrt{2} \cosh^{-1} (x) $ and the behavior of double pole solution at $t \to \pm \infty$ writes
\begin{align*}
u(x,t) &\sim \, Q (x - t + \log (4t)) - Q (x - t - \log (4t)), \qquad t \to + \infty\\
&\sim -\, Q (x - t + \log (-4t)) + Q (x - t - \log (-4t)), \qquad t \to - \infty
\end{align*}
(see the formula $\mathrm{3.13}$ in \cite{OW} with $\eta = 1$ after suitable scaling, note that $c = 4$ for $p=3$ matches \eqref{c:eq}) so soliton and antisoliton approach very slowly, interact nonlinearly and separate again very slowly. The distance between soliton and antisoliton is asymptotically proportional to $\log |t|$ both at $t \to + \infty$ and $t \to - \infty$. An interesting question is to understand the behavior of solutions in Main Theorem for $t \leq 0$ in non-integrable case ($p=4$). We conjecture that the behavior as $t \to - \infty$ for $p=4$ is not the same, the relative distance being of order $|t|$. A hint for this observation comes from computations in \cite{MaMeinvent}: when the dispersion is nontrivial, the faster soliton becomes bigger and the slower becomes smaller and then they should split linearly in time for $t \to - \infty$, in contrast with the integrable case. 
\end{remark}

We summarize the organization of the paper. In Section \ref{s:2}, we construct an approximate solution (an ansatz solution) and find the main order of all terms in the formal evolution system of the geometrical parameters (scaling, position). In Section \ref{sec:uni}, we prove backward uniform estimates, note that the proof of these estimates is slightly different in super-critical cases due to unstable directions (see \cite{CMM}). In Section \ref{s:3}, we finish the proof of Main Theorem by compactness arguments on a suitable sequence of backward solutions.
\subsection{Notation and identities on solitons} The $L^2$ scalar product of two real valued functions $f,g \in L^2 (\RR)$ is denoted by
\[ \la f,g \ra = \int_{\RR} f(x) g(x) dx.  \]
Recall the equation of $Q_v$
\begin{equation}\label{eq:Q}
Q_v = v^{\frac{1}{p-1}} Q (\sqrt{v} x), \qquad Q_v'' + Q^p_v = v Q_v 
\end{equation}
for $v>0$ where 
\[Q (x) = \left(\frac{p+1}{2\cosh^2\left(\frac{p-1}{2}x\right)} \right)^{\frac{1}{p-1}}\]
solves
\begin{equation}
Q'' + Q^p = Q \quad\mbox{and} \quad (Q')^2+ \frac{2}{p+1} Q^{p+1} = Q^2.
\end{equation}
It is easily checked that for $p>2$, as $x \to + \infty$
\begin{equation}\label{asyQ}
\begin{aligned}
Q (x) &=  c_Q e^{-x} + O(e^{-2x}),\quad Q' (x) &=  -c_Q e^{-x} + O(e^{-2x}).
\end{aligned}
\end{equation}
with $c_Q = \left(2p+2\right)^{\frac{1}{p-1}} $. Let
\[P= \frac{Q'}{Q} + 1 - \frac{2}{c_Q} e^{-x}Q,\]
then $|P(x)| \lesssim e^{- 2|x|}$.

We denote by $\mathcal{Y}$ the set of smooth functions $f$ satisfying
\[\mbox{for all } p \in \NN, \mbox{ there exists }q\in \NN,\mbox{ such that }\forall x\in \RR \quad |f^{(p)}| \lesssim |x|^q e^{-x}. \]
Let $\Lambda$ be the generator of $L^2$-scaling
\begin{equation}\label{def1}
\Lambda Q_v = \left(\frac{\p}{\p v'} Q_{v'}\right)_{|v' = v} = \frac{1}{v}\left(\frac{1}{p-1} Q_v + \frac{1}{2} xQ_v' \right),
\end{equation}
\begin{equation}\label{def2}
\Lambda^2 Q_v = \left(\frac{\p^2}{\p v'^2} Q_{v'}\right)_{|v' = v}.
\end{equation}
The linearization of \eqref{gkdv} involves the following self-adjoint operator
\[L f = - f'' + f - p Q^{p-1}f. \]
We recall the coercivity property of $L$ (see \cite{MMT2}, \cite{We86}) in sub-critical cases: there exists $\mu > 0$ such that for $f \in H^1(\RR)$,
\begin{equation}\label{coer:eq}
\la Lf,f \ra  \geq \mu \|f\|_{H^1}^2 - \frac{1}{\mu} \left(\la f,Q \ra^2 + \la f, Q'\ra^2 \right).
\end{equation}
The situation is different in super-critical cases since the direction related to the eigenvector $Q^{\frac{p+1}{2}}$ cannot be controlled by the scaling parameter. This is due to the unstable nature of the soliton and to the existence of eigenfunctions $Z^{\pm}$ with real nonzero eigenvalues of the operator $L\p_x$:
\begin{equation}\label{eqZ}
L(\p_x Z^{\pm}) = \pm e_0 Z^{\pm}, \qquad e_0>0
\end{equation}
constructed in \cite{L}, \cite{PW}. The functions $Z^{\pm}$ are normalized so that $\|Z^\pm\|_{L^2} =1$. We recall from \cite{PW} that $Z^\pm \in \mathcal{Y}$ and from \cite{CMM} (see also \cite{DM}) that there holds a property of positivity based on $Z^\pm$: 
there exists $\mu > 0$ such that for $f \in H^1(\RR)$,
\begin{equation}\label{coer:eq1}
\la Lf,f \ra  \geq \mu \|f\|_{H^1}^2 - \frac{1}{\mu} \left(\la f,Z^+ \ra^2+\la f,Z^- \ra^2 + \la f, Q'\ra^2 \right).
\end{equation}
Now, we give here some explicit antecedents and integral identities for $L$:
\begin{equation}\label{Q:iden}
L Q = - (p-1)Q^p, \qquad LQ' =0, \qquad\left(\frac{Q'}{Q} \right)' = - \frac{p-1}{p+1}Q^{p-1},
\end{equation}
\begin{equation}\label{Q:iden2}
L\left(\frac{Q'}{Q} \right) =  -\frac{3p-1}{p+1}Q^{p-2}Q'+ \frac{Q'}{Q},\quad \left(L\left(\frac{Q'}{Q} \right)\right)'= - \frac{3p(p-1)}{p+1}Q^{p-1} + \frac{3 (3p-1)(p-1)}{(p+1)^2}Q^{2p-2},
\end{equation}
\begin{equation}\label{int:value}
\int Q \Lambda Q = \left(\frac{1}{p-1} - \frac{1}{4} \right) \int Q^2, \quad \int Q^{r +p-1} = \frac{r(p+1)}{2r + p-1}\int Q^r \mbox{ for } r \geq 1.
\end{equation}
We introduce here some notation of order
\begin{itemize}
\item $f(t,x) = O (g(t,x))$ if $\exists C > 0$ such that
\begin{equation*}
|f(t,x)| \leq C g(t,x) \quad \mbox{and} \quad |\p_x f(t,x)| \leq C g(t,x)
\end{equation*}
\item $f(t,x) = \OO (g(t,x))$ if $\exists C > 0$ such that
\begin{equation}
\left(1 + e^{\frac{1}{2}(x - z_1(t))}\right)|f(t,x)| \leq C g(t,x) \quad \mbox{and} \quad \left(1 + e^{\frac{1}{2}(x - z_1(t))}\right)|\p_x f(t,x)| \leq C g(t,x)
\end{equation}
\item $f(t,x) = O_{H^{1}} (g(t))$ if $\exists C > 0$ such that
\begin{equation*}
f(t,x) = O (g(t)) \quad \mbox{and} \quad\|f(t,x)\|_{H^1} \leq C g(t).
\end{equation*}
\end{itemize}

\section{Approximate solution}\label{s:2}
In this section, we first construct an almost symmetric 2-bubble approximate solution to renormalized equations of \eqref{gkdv} and then extract the evolution system of the geometrical parameters of the bubbles. The approximate solution contains special terms due to the nonlinear interactions of the waves (see Lemma \ref{lem:A}) which appear at the main order of the evolution system (see \eqref{flow}). This tail of order $e^{-z}$ is indeed relevant in the description of the exact solution, see Remark \ref{remark5}. We also state a standard modulation lemma around the approximate solution. 
\subsection{System of modulation equations}
We renormalize the flow by considering
\be\label{renor}
u (t,x) =  w(t,y),  \quad x = y + t
\ee
so that $w(t,y)$ verifies the equation
\begin{equation}\label{rescale:eq}
\p_t w + \p_y (\p^2_y w  - w + |w|^{p-1}w ) = 0.
\end{equation}
Consider a time dependent $\mathcal{C}^1$ function of parameters $\Gamma (t)$ of the form
\[\Gamma (t) = (\mu_1 (t), \mu_2(t), z_1(t), z_2(t)) \in \RR^+ \times \RR^- \times \RR \times \RR \]
with $|\mu_1|, |\mu_2| \ll 1, z(t) \gg 1$ and $|z_1(t) + z_2(t)| \ll 1$ where we denote
\[z (t) = z_1 (t) - z_2 (t).\] 
We look for an approximate solution to the problem. By expanding the first order of the interaction of the two solitons which is of order $e^{-z}$, we guess an anszat $r(t,y)$ for this order and deduce from the computations the evolution system of the geometrical parameters $\Gamma (t)$. Since the extra term $r(t,y)$ due to the interactions may not be in $H^1$ (it may have nonzero limits at $-\infty$), we have to introduce an $L^2$ approximation of these terms, using suitable cut-off functions. Note that in the integrable case ($p=3$), one should have $r(t,y)$ in $L^2$ (see Remark \ref{remark4}), thus the phenomenon is related to nonintegrability (see \cite{MaMeinvent} for a similar phenomenon).

Let $\psi: \RR \to [0,1]$ be a $\mathcal{C}^\infty$ function such that
\[\psi \equiv 0 \mbox{ on } ( - \infty, 0],\quad \psi \equiv 1 \mbox{ on } \bigg[\frac{1}{2}, + \infty\bigg) , \quad \psi' \geq 0\]
and 
\[\varphi (t, y) = \psi\left( e^{- \frac{1}{2}z(t)} y+ 1\right),\quad \tilde{\varphi} (t, y) = \psi'\left( e^{- \frac{1}{2}z(t)} y+ 1\right).\]
Then remark that 
\begin{equation}\label{remark:varphi}
\|\varphi (t,y)/ (1 + e^{\frac{1}{2}(y - z_1(t))}) \|_{L^2} + \|\tilde{\varphi} (t,y)/ (1 + e^{\frac{1}{2}(y - z_1(t))}) \|_{L^2}\lesssim e^{\frac{1}{4}z(t)},
\end{equation}
\begin{equation}\label{remark:varphi1}
\|\p_y^k \varphi (t,y)\|_{L^2} \lesssim  e^{\frac{1}{4}z(t)-\frac{k}{2}z(t)},\quad \mbox{for } k \in \mathbb{N}
\end{equation}
and
\begin{equation}\label{remark:varphi2}
\left|\frac{\p\varphi (t,y)}{\p z_k}\right| = \left|\frac{1}{2}e^{-\frac{1}{2}z(t)} y \psi' \left( e^{- \frac{1}{2}z(t)} y+ 1\right)\right| \lesssim \left|\tilde{\varphi} (t,y)\right|, \quad \mbox{for } k = 1,2
\end{equation}
as $\psi' \left( e^{- \frac{1}{2}z(t)} y+ 1\right) \equiv 0$ for $|y| \geq e^{\frac{1}{2}z(t)}$. Next, we set
\begin{align*}
\tilde{R}_k (t,y) = Q_{1 + \mu_k (t)} (y - z_k(t)),& \quad  R_k (t,y) = Q (y - z_k(t))\\
\Lambda\tilde{R}_k (t,y) =\Lambda Q_{1 + \mu_k (t)} (y - z_k(t)),& \quad  \Lambda R_k (t,y) =\Lambda Q (y - z_k(t))
\end{align*}   
and similarly for $\Lambda^2 R_k$, where $\Lambda Q_v$ and $\Lambda^2 Q_v$ are defined in \eqref{def1}, \eqref{def2}. Denote $\tilde{R} = \tilde{R}_1 + \sigma\tilde{R}_2$, let consider the approximate solution of the form
\begin{equation}\label{V0:eq}
V = \tilde{R}_1 + \sigma \tilde{R}_2 + \tilde{r} = \tilde{R}+ \tilde{r},\quad
\mbox{ where } \tilde{r} (t,y) = r (t,y) \varphi (t,y),
\end{equation}
with $r(t,y)$ to be determined.

\begin{proposition}[Approximate solution and leading order flow] \label{prop:lead} Let $I$ be some interval and a function of parameters $\Gamma (t)$ on $I$ such that
\begin{equation}\label{cond:G}
|\mu_1 (t) + \mu_2 (t)| \leq e^{- \frac{9}{16} z(t)}, \quad |z_1 (t) + z_2 (t)| \leq e^{- \frac{1}{32} z(t)},\quad z_1(t) - z_2(t) \geq 0.
\end{equation}
Then there exist unique real-valued functions $A_1 (y), A_2 (y)$ and some constants $\alpha > 0, \theta , a_1, a_2$ satisfying:
\begin{enumerate}[(a)]
\item $V (t,y)$ defined as in \eqref{V0:eq} with $r (t,y) = e^{- z(t)} [A_1(y - z_1(t)) + A_2 (y - z_2(t))]$
\begin{equation}
\begin{aligned}
V (t, y) =&  V(y;\Gamma (t))\\
=& \,Q _{1 + \mu_1(t)} (y- z_1(t)) + \sigma Q _{1 + \mu_2(t)} (y- z_2(t))\\
&+ e^{- z(t)}\left(A_1(y - z_1(t)) + A_2 (y - z_2(t))\right)\varphi (t,y)
\end{aligned}
\end{equation}
is an approximate solution of equation \eqref{rescale:eq} in the following sense: the error to the flow at $V$
\begin{equation}
\EE_{V}= \p_t V + \p_y (\p^2_y V - V +|V|^{p-1}V)
\end{equation}
decomposes as
\begin{equation}\label{flow}
\EE_V = \vec{m}_1 \cdot \vec{M}_1 +  \vec{m}_2 \cdot \vec{M}_2 + E(t,y)
\end{equation}
where
\begin{align}
\vec{m}_1 = \begin{pmatrix}
\dot{\mu}_1 +\alpha e^{-z}\\ \dot{z}_1 - \mu_1 - a_1 e^{-z}
\end{pmatrix}&, \quad \vec{m}_2 = \begin{pmatrix}
\dot{\mu}_2 -\alpha e^{-z}\\ \dot{z}_2 - \mu_2 + a_2 e^{-z}
\end{pmatrix},\\ \vec{M}_1 = \begin{pmatrix}
\Lambda \tilde{R}_1\\ -\p_y \tilde{R}_1 
\end{pmatrix}&,\quad \vec{M}_2 = \sigma\begin{pmatrix}
\Lambda \tilde{R}_2\\ - \p_y \tilde{R}_2
\end{pmatrix}
\end{align}
and 
\begin{multline}\label{e:R}
\|E (t,y) \|_{H^1} \lesssim |\mu_1| z e^{-\frac{3}{4} z} + |\mu_2| z e^{- \frac{3}{4}z}+ e^{-\frac{5}{4} z} + \sum_{j=1}^2 \left|\vec{m}_j \right| \left( e^{-z} + |\mu_1| e^{-\frac{3}{4}z} + |\mu_2| e^{-\frac{3}{4}z}\right).
\end{multline}
\item Closeness to the sum of two solitons. For some $q>0$,
\begin{equation}\label{closeH:V}
\left\|V(t) - \{Q_{1 + \mu_1(t)} (\cdot - z_1(t)) + \sigma Q_{1+\mu_2(t)}(\cdot - z_2(t))\}\right\|_{H^1} \lesssim |z(t)|^q e^{-\frac{3}{4} z(t)},\quad t\in I.
\end{equation} 
\item $A_j \in L^\infty (\RR), j=1,2$ are the unique solutions of:
\begin{equation}
\begin{gathered}
(-LA_1)' - \alpha \Lambda Q - a_1 Q' =  - \sigma p c_Q \, \p_x (e^{-x}Q^{p-1}),\\
(-LA_2)' + 2 p \theta (Q^{p-1})' + \alpha \sigma\Lambda Q + a_2 \sigma Q' = - p c_Q \, \p_x (e^{x}Q^{p-1})
\end{gathered}
\end{equation}
satisfying
\begin{equation}
\lim_{y \to + \infty} A_1 = \lim_{y \to + \infty} A_2 = 0, \qquad \lim_{y \to - \infty} A_1 = \lim_{y \to - \infty} A_2 = 2 \theta
\end{equation}
and
\[\int A_1  Q' = \int A_1 Q = 0, \quad \int A_2 Q' = \int (A_2 + 2 \theta) Q = 0 .\]
\end{enumerate}
\end{proposition}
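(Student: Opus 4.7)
The plan is to compute $\mathcal E_V$ by direct expansion, sort the contributions by order in $e^{-z}$ and $\mu_k$, and then fix $A_1,A_2$ together with the constants $\alpha,\theta,a_1,a_2$ so that every order-$e^{-z}$ interaction cancels; what remains constitutes the error $E$ with the stated bound, and the closeness estimate \eqref{closeH:V} follows from a direct bound on $\|\tilde r\|_{H^1}$.

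For the linear piece, I would use the soliton equation $Q_{1+\mu_k}''+Q_{1+\mu_k}^p=(1+\mu_k)Q_{1+\mu_k}$ together with $\partial_t\tilde R_k=\dot\mu_k\Lambda\tilde R_k-\dot z_k\partial_y\tilde R_k$ to get
\[
\partial_t\tilde R+\partial_y\bigl(\partial_y^2\tilde R-\tilde R+\tilde R_1^p+\sigma\tilde R_2^p\bigr)=\dot\mu_1\Lambda\tilde R_1-(\dot z_1-\mu_1)\partial_y\tilde R_1+\sigma\bigl[\dot\mu_2\Lambda\tilde R_2-(\dot z_2-\mu_2)\partial_y\tilde R_2\bigr],
\]
which is already the $\vec m_k\cdot\vec M_k$ skeleton, modulo the $\alpha e^{-z}$ and $a_k e^{-z}$ shifts that will emerge from the nonlinear interaction. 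The contribution of $\tilde r=r\varphi$ to the linear part is $\partial_t\tilde r-\partial_y\tilde r+\partial_y^3\tilde r$; every term in which a derivative lands on $\varphi$ is supported on the set $\{-e^{z/2}\le y\le -e^{z/2}/2\}$ and is controlled by \eqref{remark:varphi1}--\eqref{remark:varphi2}, so those pieces go into $E$.

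For the nonlinear interaction I would split
\[
|V|^{p-1}V-\tilde R_1^p-\sigma\tilde R_2^p=\bigl(|\tilde R|^{p-1}\tilde R-\tilde R_1^p-\sigma\tilde R_2^p\bigr)+p|\tilde R|^{p-1}\tilde r+O\bigl(\tilde r^{\,2}\bigr)
\]
and use the asymptotics \eqref{asyQ}: near $z_1$ one has $\tilde R_2(y)=\sigma c_Q e^{-z}e^{-(y-z_1)}+O(e^{-2z})$, hence Taylor expansion yields the leading interaction $p\sigma c_Q e^{-z}\tilde R_1^{p-1}e^{-(y-z_1)}+O(e^{-2z})$ there, with the symmetric statement at $z_2$. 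After $\partial_y$ and recentering at $x=y-z_k$, these produce exactly the forcings $-\sigma p c_Q\partial_x(e^{-x}Q^{p-1})$ and $-p c_Q\partial_x(e^x Q^{p-1})$ appearing in~(c). Since $A_1(\cdot-z_1)\to 2\theta$ as $y-z_1\to-\infty$, the linearized term $p|\tilde R|^{p-1}\tilde r$ picks up the additional $e^{-z}$-contribution $2p\theta\sigma\tilde R_2^{p-1}$ near $z_2$, which is the source of the $2p\theta(Q^{p-1})'$ forcing in the $A_2$-equation.

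Matching coefficients of $e^{-z}$ then gives precisely the coupled system in~(c). To solve it I would integrate once to get $LA_k=G_k$ with $G_k$ linear in $\alpha,a_k,\theta$; since $\ker L=\R Q'$ by \eqref{Q:iden}, the Fredholm condition $\langle G_k,Q'\rangle=0$ determines $a_k$, and matching the left-tail $LA_1(-\infty)=2\theta$ against the primitive of $\alpha\Lambda Q$, using $\int\Lambda Q=(\tfrac{1}{p-1}-\tfrac{1}{2})\int Q$ derived from the scaling of $Q_v$, fixes $\theta$ in terms of $\alpha$; the normalization $\int A_1Q=0$ then pins $\alpha$. The remaining freedom $A_k\mapsto A_k+cQ'$ is removed by imposing $\int A_kQ'=0$, so each $A_k\in L^\infty(\R)$ is unique. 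Finally, $\|E\|_{H^1}$ in~\eqref{e:R} is obtained by adding up (i) the Taylor remainders of size $O(e^{-2z})$ from the nonlinear expansion (producing $e^{-5z/4}$ after absorbing the exponential localization), (ii) terms of size $|\mu_k|ze^{-3z/4}$ from $\tilde R_k-R_k=\mu_k\Lambda R_k+O(\mu_k^2)$ (the $z$-factor coming from spatial overlap between the translated $\Lambda R_k$ and $Q^{p-1}$-type weights), and (iii) $|\vec m_k|$-multiplied terms from $\partial_t$ landing on $A_k(\cdot-z_k),\varphi$ and the $\mu_k$-dependence; the closeness \eqref{closeH:V} follows from $\|\tilde r\|_{H^1}\lesssim e^{-3z/4}|z|^q$ via \eqref{remark:varphi1}. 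The delicate point in the whole argument is that $r$ is only bounded (not $L^2$), reflecting $\theta\ne 0$ in the non-integrable case, which forces the use of the cut-off $\varphi$ and is what generates the "background" forcing $2p\theta(Q^{p-1})'$ in the $A_2$-equation.
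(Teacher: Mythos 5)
Your overall strategy matches the paper's: decompose $\mathcal E_V$ into the modulation skeleton $\vec m_k\cdot\vec M_k$ plus the order-$e^{-z}$ interaction, choose $A_1,A_2$ so that the interaction is absorbed, and then estimate the remainder. Your account of where the forcing terms $-\sigma p c_Q\partial_x(e^{-x}Q^{p-1})$, $-p c_Q\partial_x(e^{x}Q^{p-1})$, and the background term $2p\theta(Q^{p-1})'$ come from is correct, as is your tail-matching derivation of $\theta$ (it agrees with the paper's because $\int Q^{p-1}=\tfrac{2(p+1)}{p-1}$, which makes the coefficient $\tfrac{(p+1)}{(p-1)\int Q^{p-1}}$ equal to $\tfrac12$).

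However, there is a genuine error in the linear algebra by which you fix the constants $\alpha$ and $a_k$. You claim that, after integrating \eqref{eq:A1} once to get $L A_1 = G_1$, the Fredholm condition $\langle G_1,Q'\rangle=0$ determines $a_1$, and that afterwards the normalization $\int A_1 Q=0$ pins down $\alpha$. This is backward. Integrating $a_1 Q'$ once produces the term $a_1 Q$ in $G_1$, and $\langle Q,Q'\rangle=0$; so $a_1$ disappears entirely from the Fredholm pairing. The same pairing \emph{does} contain $\alpha$ (through $\langle\int_{+\infty}^{\cdot}\Lambda Q,\,Q'\rangle=-\langle\Lambda Q,Q\rangle\neq 0$), and indeed $\langle G_1,Q'\rangle=0$ is exactly equivalent to the paper's step of multiplying the equation \eqref{eq:A1} by $Q$ and integrating, which yields
\[
\alpha=-\frac{\sigma c_Q\int e^{-x}Q^p}{\int Q\,\Lambda Q}.
\]
Thus the Fredholm condition fixes $\alpha$, not $a_1$. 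Once $\alpha$ and $\theta$ are known, the solution of $LA=Z$ with $\int AQ'=0$ is unique, and $a_1$ is the one remaining free parameter; it is fixed precisely by imposing the normalization $\int \hat A_1 Q=0$ (this works because $L\Lambda Q=-Q$ and $\int Q\,\Lambda Q\neq0$). If you try to run your scheme as written, the ``Fredholm condition $\Rightarrow a_1$'' step fails (that equation is $a_1$-free), and the ``normalization $\Rightarrow\alpha$'' step is then overdetermined, since $\alpha$ was already forced. The remaining ingredients of your proposal---the expansion of the nonlinear interaction, the cut-off handling, the $\theta$ tail-matching, and the $H^1$ bookkeeping for $E$---are sound.

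Two small additional remarks: (i) $\ker L=\mathbb{R}Q'$ requires the usual ODE uniqueness argument, not just the identity $LQ'=0$ from \eqref{Q:iden}; (ii) the background contribution arising from $A_1(\cdot-z_1)\to 2\theta$ near $z_2$ is $2p\theta\,\tilde R_2^{p-1}$, without the extra factor of $\sigma$ you wrote.
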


\subsection{Proof of Proposition \ref{prop:lead}}
We compute $\EE_V$. 

Using $\frac{\p \tilde{R}_j}{\p \mu_j} = \Lambda \tilde{R}_j$, $\frac{\p \tilde{R}_j}{\p y_j} = - \p_y \tilde{R}_j$ and $\p_y^2 \tilde{R}_k - (1 + \mu_k) \tilde{R}_k + |\tilde{R}_k |^{p-1}\tilde{R}_k  = 0$, we have
\begin{multline}\label{eq:Er}
\EE_{\tilde{R}} = \dot{\mu}_1 \Lambda \tilde{R}_1 - (\dot{z}_1 - \mu_1)\p_y \tilde{R}_1 + \dot{\mu}_2\sigma \Lambda \tilde{R}_2 - (\dot{z}_2- \mu_2)\sigma\p_y \tilde{R}_2 \\ + \p_y [|\tilde{R}_1 + \sigma \tilde{R}_2|^{p-1} (\tilde{R}_1 + \sigma \tilde{R}_2) - \tilde{R}_1^p - \sigma \tilde{R}_2^p].
\end{multline}
Next, let
\[I(r) = \p_y \left[\p_y^2 r - r + p (\tilde{R}_1^{p-1} + \tilde{R}_2^{p-1}) r\right], \]
\[K_1(r, \varphi) = \mu_1 \frac{\p r}{ \p z_1}\varphi + \mu_2 \frac{\p r}{ \p z_2}\varphi + \mu_1  r \frac{\p \varphi}{\p z_1} + \mu_2 r\frac{\p\varphi}{\p z_2}, \]
\[K_2(r,\varphi)=p(\tilde{R}_1^{p-1} + \tilde{R}_2^{p-1}) r \p_y \varphi-  r \p_y\varphi +  r \p_y^3\varphi + 3\p_y r \p_y^2\varphi + 3 \p_y^2r \p_y\varphi\]
(note that $I(r), K_1(r, \varphi), K_2(r,\varphi)$ are linear in $r$) and
\[J_1 (\tilde{r}) = \dot{\mu}_1 \frac{\p \tilde{r}}{ \p \mu_1} + (\dot{z}_1 - \mu_1)\frac{\p \tilde{r}}{ \p z_1} + \dot{\mu}_2  \frac{\p \tilde{r}}{ \p  \mu_2} + (\dot{z}_2- \mu_2)\frac{\p \tilde{r}}{ \p z_2},\]
\[J_2 (\tilde{r}) =\p_y [|\tilde{R}_1 + \sigma \tilde{R}_2 + \tilde{r}|^{p-1} (\tilde{R}_1 + \sigma \tilde{R}_2 + \tilde{r}) - |\tilde{R}_1 + \sigma \tilde{R}_2|^{p-1} (\tilde{R}_1 + \sigma \tilde{R}_2) - p (\tilde{R}_1^{p-1} + \tilde{R}_2^{p-1}) \tilde{r}].\]
Then by computation, we see that
\begin{equation}\label{EV:total}
\EE_V = \EE_{\tilde{R}} + I(r)\varphi + K_1(r,\varphi) + J_1(\tilde{r})+ J_2(\tilde{r})+K_2(r,\varphi).
\end{equation}
\begin{lemma}[Expansion of nonlinear interaction]\label{lem:expnl} The nonlinear interaction term 
\[G = \p_y [|\tilde{R}|^{p-1} \tilde{R} - \tilde{R}_1^p - \sigma \tilde{R}_2^p]\] 
can be decomposed asymptotically as
\begin{equation}\label{eq:nonlinear}
G = p c_Q \left(\sigma e^{-z} \p_y [e^{- (y - z_1)}R_1^{p-1}] + e^{-z} \p_y [e^{(y - z_2)}R_2^{p-1}] \right) + \sum_{k=1}^2 O_{H^1}(|\mu_k| ze^{-z}) + O_{H^1}(e^{-\frac{3}{2}z}).
\end{equation}
\end{lemma}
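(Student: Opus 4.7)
The plan is to decompose the nonlinear interaction as a sum of two explicit leading cross terms plus a higher-order pointwise remainder, and then to replace each $\tilde R_k$ by its asymptotic tail provided by \eqref{asyQ} inside the cross terms.

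First I establish the pointwise identity
\[|\tilde R|^{p-1}\tilde R - \tilde R_1^p - \sigma \tilde R_2^p = p\sigma\,\tilde R_1^{p-1}\tilde R_2 + p\,\tilde R_1 \tilde R_2^{p-1} + N,\]
with a remainder $N$ controlled by $|N|+|\p_y N|\lesssim \min(\tilde R_1,\tilde R_2)^2\,(\tilde R_1+\tilde R_2)^{p-2}$ pointwise. For $p$ odd or for $\sigma=+1$ with $p$ even (so that $\tilde R_1+\sigma\tilde R_2\ge 0$ everywhere), this follows from the binomial theorem, the remainder being the sum of the interior cross terms $\binom{p}{k}\sigma^k\tilde R_1^{p-k}\tilde R_2^k$ with $2\le k\le p-2$. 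In the remaining case ($p$ even, $\sigma=-1$, hence $p=4$), $|u|^{p-1}u$ is merely $C^{p-1}$; I split $\RR$ at the unique crossing $y_*$ of $\tilde R_1-\tilde R_2$ (located near $m:=(z_1+z_2)/2$), apply the polynomial identity in each region, and absorb the region-dependent discrepancy into $N$. Since $\tilde R_k(y_*)\sim e^{-z/2}$, the discrepancy fits within the same pointwise bound.

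Using $\tilde R_k(y)\lesssim e^{-|y-z_k|}$, the bound $\min^2\max^{p-2}$ attains its maximum $\sim e^{-2z}$ near $y=z_1$ or $y=z_2$ (where $\min\sim e^{-z}$ and $\max\sim 1$) and is at most $e^{-2z}$ on the intermediate interval $[z_2,z_1]$; integrating yields $\|N\|_{H^1}+\|\p_y N\|_{L^2}\lesssim e^{-2z}\sqrt{z}\ll e^{-3z/2}$, absorbed into the claimed error. For the cross terms, I use the scaling $Q_{1+\mu_k}(x)=(1+\mu_k)^{1/(p-1)}Q(\sqrt{1+\mu_k}\,x)$ combined with \eqref{asyQ} to obtain, for $x\ge 0$,
\[Q_{1+\mu_k}(x) = c_Q e^{-x} + O\bigl(|\mu_k|\langle x\rangle e^{-x}\bigr) + O(e^{-2x}).\]
Applying this with $x=y-z_2$ on the support of the localized factor $R_1^{p-1}$ (so $y-z_2\approx z$ is large) and combining with $\tilde R_1^{p-1}=R_1^{p-1}+O(|\mu_1|\langle y-z_1\rangle R_1^{p-1})$ gives
\[p\sigma\tilde R_1^{p-1}\tilde R_2 = p\sigma c_Q e^{-z} R_1^{p-1} e^{-(y-z_1)} + O_{H^1}\bigl(|\mu_1|z e^{-z} + |\mu_2|z e^{-z} + e^{-2z}\bigr).\]
The symmetric argument, using $Q(x)\sim c_Q e^{x}$ as $x\to-\infty$ from the evenness of $Q$, handles $p\,\tilde R_1\tilde R_2^{p-1}$ and produces the second main term $p c_Q e^{-z} R_2^{p-1} e^{(y-z_2)}$ with the same error type.

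Taking $\p_y$ of the resulting expansion preserves exponential decays, and gathering all the errors under the bounds $|\mu_k|ze^{-z}$ and $e^{-3z/2}$ yields the stated formula for $G$. The main obstacle is the algebraic treatment of the non-smooth case ($p=4$, $\sigma=-1$), where the Taylor expansion of $|u|^{p-1}u$ is region-dependent near the crossing point $y_*$; this is resolved by observing that at $y_*$ every relevant term is exponentially small of order $e^{-z/2}$, so the regional mismatch is negligible in the final $H^1$ error.
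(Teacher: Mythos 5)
Your proposal is correct and mathematically equivalent to the paper's argument. The key steps are the same: (i) extract the two leading cross terms $p\sigma\tilde R_1^{p-1}\tilde R_2 + p\tilde R_1\tilde R_2^{p-1}$ from the nonlinearity with a remainder bounded pointwise by $\min(\tilde R_j)^2\max(\tilde R_j)^{p-2}$ (the paper writes this as $O(e^{-z}R_1R_2)$, which is the same magnitude after using $\min\max\lesssim e^{-z}$ for $p\geq 4$ and the fact that the remainder vanishes identically for $p=3$); and (ii) replace the "far" soliton by its exponential tail $c_Qe^{-(y-z_2)}$ using~\eqref{asyQ}, tracking $\mu_j$-errors. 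The two genuine differences in presentation are worth noting. First, you make the binomial remainder and its pointwise bound fully explicit, including a separate discussion of the $C^3$-but-not-$C^4$ case $p=4$, $\sigma=-1$, where $|u|^{p-1}u$ is not a polynomial and the expansion is region-dependent around the unique sign change of $\tilde R$; the paper invokes only "$p-1\geq 2$" and does not spell this out, so your added care is a modest improvement in rigor at no extra cost, since the region-dependent pieces match at $y_*$ and satisfy the same bound. Second, you merge the paper's two-step process, Taylor in $\mu_k$ (via~\eqref{Rtil}) followed by the claim $\p_y(R_1^{p-1}R_2) = c_Qe^{-z}\p_y[e^{-(y-z_1)}R_1^{p-1}] + O_{H^1}(e^{-3z/2})$, into a single expansion of $Q_{1+\mu_k}(x) = c_Qe^{-x} + O(|\mu_k|\langle x\rangle e^{-x}) + O(e^{-2x})$ applied on the support of the opposing bubble; this is equivalent and also absorbs the paper's $e^{-3z/2}$ error (your $O_{H^1}(e^{-2z})$ is in fact $O_{H^1}(\sqrt z\,e^{-2z})$ for $p\in\{3,4\}$, but both are $\ll e^{-3z/2}$, so the stated conclusion follows either way).
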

\begin{proof}[Proof of Lemma \ref{lem:expnl}] We observe that for $z = z_1 - z_2$, by \eqref{asyQ},
\begin{align*}
Q (y) Q(y-z) \lesssim e^{-z}\qquad \mbox{and}& \qquad \|Q (y) Q(y-z) \|_{L^2} \lesssim z e^{-z} \\
|Q' (y)| Q(y-z) \lesssim e^{-z}\qquad \mbox{and}& \qquad \| Q' (y) Q(y-z) \|_{L^2} \lesssim z e^{-z} 
\end{align*}
so that (recall $p-1 \geq 2$)
\begin{align*}
\p_y [|\tilde{R}_1 + \sigma \tilde{R}_2|^{p-1} (\tilde{R}_1 + \sigma \tilde{R}_2)- \tilde{R}_1^p - \sigma\tilde{R}_2^p] &= p\p_y [\sigma\tilde{R}_1^{p-1} \tilde{R}_2 + \tilde{R}_1 \tilde{R}_2^{p-1} ] + O(e^{-z}R_1R_2)\\
&= p\p_y [\sigma\tilde{R}_1^{p-1} \tilde{R}_2 + \tilde{R}_1 \tilde{R}_2^{p-1} ] + O_{H^1}(ze^{-2z}).
\end{align*}
Next, also from the asymptotic behavior of $Q$ and $Q'$, we deduce the Taylor formula
\begin{multline}\label{Rtil}
\tilde{R}_k (t,y) = Q(y - z_k) + \mu_k \Lambda Q(y - z_k) + \mu_k^2 \int_0^1 (1-s) \Lambda^2Q_{1 + s\mu_k}(y - z_k) ds\\
= Q(y - z_k) + \mu_k \Lambda Q(y - z_k) + O (\mu_k^2(1 + |y-z_k|^2)e^{-|y-z_k|})
\end{multline}
thus we find
\begin{align*}
\sigma\p_y [\tilde{R}_1^{p-1} \tilde{R}_2] =& \sigma \p_y  (R_1 ^{p-1} R_2 ) + O_{H^1}(|\mu_1| ze^{-z}) + O_{H^1}(|\mu_2| ze^{-z}).
\end{align*}
On the other hand, we claim that
\[ \p_y  (R_1 ^{p-1} R_2 )= c_Q e^{-z} \p_y [e^{- (y - z_1)}R_1^{p-1}] +O_{H^1}(e^{-\frac{3}{2} z}).\]
Indeed, consider 
\[R_1 ^{p-1}\p_y R_2  + c_Q e^{-z }e^{-(y-z_1)}R_1^{p-1}  = R_1 ^{p-1}\p_y R_2  + c_Q e^{-(y-z_2)}R_1^{p-1}. \]
For $y < z_2$, by the exponential decay of $R_1$, we have $e^{-(y-z_2)} R_1 =e^{-z}$ and $|R_1 (y)|^{p} \lesssim e^{-p z}$ so as $p-1 \geq 2$,
\[\left|R_1 ^{p-1}\p_y R_2  + c_Q e^{-(y-z_2)}R_1^{p-1}\right| \lesssim e^{-\frac{3}{2} z} |R_1|^{\frac{1}{2}}.\]
For $y > z_2$, as $\p_y R_2 = -c_Q e^{- (y-z_2)} + O(e^{-2 (y-z_2)}) $, we also have:
\[\left|R_1 ^{p-1}\p_y R_2  + c_Q e^{-(y-z_2)}R_1^{p-1}\right| \lesssim e^{-\frac{3}{2} z} |R_1|^{\frac{1}{2}}.\]
By the same way, we consider $R_1 ^{p-2}\p_y R_1\, R_2  - c_Q e^{-z }e^{-(y-z_1)}R_1^{p-2} \p_y R_1$ and have
\[\left|R_1 ^{p-2}\p_y R_1 \,R_2  - c_Q e^{-z }e^{-(y-z_1)}R_1^{p-2} \p_y R_1\right| \lesssim e^{-\frac{3}{2}z} |R_1|^{\frac{1}{2}}.\]
Therefore,
\begin{align*}
\sigma\p_y [\tilde{R}_1^{p-1} \tilde{R}_2]= \sigma c_Q e^{-z} \p_y [e^{- (y - z_1)}R_1^{p-1}] + O_{H^1}(|\mu_1| ze^{-z}) + O_{H^1}(|\mu_2| ze^{-z})+ O_{H^1}(e^{-\frac{3}{2} z}).
\end{align*}
Similarly
\begin{align*}
\p_y [\tilde{R}_2^{p-1} \tilde{R}_1] = c_Q e^{-z} \p_y [e^{(y - z_2)}R_2^{p-1}]  +  O_{H^1}(|\mu_1| ze^{-z}) + O_{H^1}(|\mu_2| ze^{-z}) + O_{H^1}(e^{-\frac{3}{2} z}).
\end{align*}
\end{proof}

Now we construct the refined term $r(y; \Gamma (t))$ to match the order $e^{-z(t)}$ of the nonlinear interaction.
\begin{lemma}[Definition and equation of $r (t,y)$] \label{lem:A} There exist $\alpha > 0$, $\theta $, $a_1, a_2$, $\hat{A}_1 \in \mathcal{Y}, \hat{A}_2 \in \mathcal{Y}$ such that the two functions
\[A_1  = \hat{A}_1+ \theta \left(1 +\frac{Q'}{Q} \right), \qquad A_2 = \hat{A}_2 - \sigma \theta \left(1 +\frac{Q'}{Q} \right)
\]
solves
\begin{equation}\label{eq:A1}
(-LA_1)' - \alpha \Lambda Q - a_1 Q' =  - \sigma p c_Q \, \p_x (e^{-x}Q^{p-1}),\qquad \int A_1  Q' = \int A_1 Q = 0, 
\end{equation}
\begin{equation}\label{eq:A2}
(-LA_2)' + 2 p \theta (Q^{p-1})' + \alpha \sigma\Lambda Q + a_2 \sigma Q' = - p c_Q \, \p_x (e^{x}Q^{p-1}), \qquad \int A_2 Q' = \int (A_2 + 2 \theta) Q = 0.
\end{equation}
Moreover, by setting
\begin{equation}\label{eq:r}
r (y; (\mu (t), z(t))) = e^{- z(t)} [A_1(y - z_1(t)) + A_2 (y - z_2(t))]
\end{equation}
then
\begin{multline}\label{for:I}
\EE_{\tilde{R}} + I(r) \varphi=\vec{m}_1 \cdot \vec{M}_1 +  \vec{m}_2 \cdot \vec{M}_2  +  O_{H^1}(|\mu_1| z e^{-z}) +  O_{H^1}(|\mu_2| z e^{-z}) + O_{H^1} (e^{-\frac{5}{4} z}).
\end{multline}
\end{lemma}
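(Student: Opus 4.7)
The plan is to choose the corrector $r$ so that $I(r)\varphi$ absorbs the leading $e^{-z}$ interaction $G$ from Lemma~\ref{lem:expnl}, leaving only the modulation pieces $\vec m_j\cdot\vec M_j$ plus smaller errors. Since $G$ splits into two pieces localized near $z_1$ and $z_2$, one sets $r(y)=e^{-z}[A_1(y-z_1)+A_2(y-z_2)]$: after the change of variables $x=y-z_k$, the principal part of $I(r)$ near $z_k$ is $e^{-z}(-LA_k)'(x)$, using $\tilde R_k^{p-1}\to Q^{p-1}$ and exponential smallness of $\tilde R_{3-k}^{p-1}$ there. The nontrivial point is the nondecaying tail $A_1\to 2\theta$ at $-\infty$: near $z_2$ one has $r\approx e^{-z}[2\theta+A_2(y-z_2)]$, and the constant baseline $2\theta$ interacting with $p\tilde R_2^{p-1}$ produces, after one $\p_y$, the extra source $2p\theta(Q^{p-1})'$ appearing in \eqref{eq:A2}. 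With this matching, the $\p_x(e^{\mp x}Q^{p-1})$ contributions of $G$ cancel exactly, and the $\alpha\Lambda Q+a_jQ'$ terms in \eqref{eq:A1}--\eqref{eq:A2} produce precisely the modulation corrections $\alpha e^{-z}\Lambda\tilde R_k+a_ke^{-z}\p_y\tilde R_k$.

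To construct $(A_j,\alpha,\theta,a_j)$, I would split $A_j=\hat A_j\pm\theta(1+Q'/Q)$; since $1+Q'/Q$ tends to $0$ at $+\infty$ and $2$ at $-\infty$, this isolates the nondecaying tail and forces $\hat A_j\to 0$ at both ends. Integrating $(-LA_j)'=F_j$ over $\RR$ and using $A_j(-\infty)=2\theta$, $A_j(+\infty)=0$ gives the flux identity $\int F_j=2\theta$; via $\int\Lambda Q=\frac{3-p}{2(p-1)}\int Q$ (from \eqref{def1} and an integration by parts) this determines $\theta$ as an explicit multiple of $\alpha$. The solvability condition for inverting $L$ reads $\langle G_j,Q'\rangle=0$, which after integration by parts becomes $\langle F_j,Q\rangle=0$; using the identity $L\Lambda Q=-Q$ (from differentiating \eqref{eq:Q} in $v$) and the explicit positive value $\langle e^{-x}Q^{p-1},Q'\rangle=\frac{1}{p}\int e^{-x}Q^p$, this pins down $\alpha$ uniquely, and a sign inspection shows $\alpha>0$ in both regimes, since the coefficient $\frac{5-p}{4(p-1)}\int Q^2$ and $\sigma$ both change sign at $p=5$ and their ratio stays positive. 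The remaining ambiguity in $A_j$, namely the kernel component $cQ'$ of $L$ and the shift $a_j\Lambda Q$ generated by $L\Lambda Q=-Q$ when $a_j$ is varied in $F_j$, is fixed by the orthogonalities $\int A_jQ'=\int A_jQ=0$, nondegenerate precisely because $\langle\Lambda Q,Q\rangle=\frac{5-p}{4(p-1)}\int Q^2\neq0$ for $p\neq 5$. The construction of $A_2$ proceeds identically, noting that the extra source $2p\theta(Q^{p-1})'$ is an exact derivative and thus does not alter the flux identity, while its contribution to the solvability condition is consistent with the same $\alpha$.

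For the identity \eqref{for:I}, substituting the constructed $A_j$ into $I(r)\varphi$ and combining with Lemma~\ref{lem:expnl} shows that the main-order interaction terms cancel by construction and the remaining $\alpha\Lambda Q+a_jQ'$ pieces reconstruct $\vec m_j\cdot\vec M_j$ up to the three claimed error sources: (i) cutoff contributions $r\,\p_y^k\varphi$ are supported on $|y|\sim e^{z/2}\gg z$, hence by \eqref{remark:varphi1} contribute $H^1$-errors much smaller than $e^{-5z/4}$; (ii) the Taylor remainders $\tilde R_k^{p-1}=R_k^{p-1}+O(\mu_k)$ from \eqref{Rtil} give the $O(|\mu_j|ze^{-z})$ terms; (iii) the cross products of $A_1(y-z_1)$ with $\tilde R_2^{p-1}$ (and symmetrically) away from their own soliton, together with the decay of $A_k\mp(\pm\theta)(1+Q'/Q)\in\mathcal Y$ at distance $z$ from the other soliton, produce the $O(e^{-5z/4})$ remainder. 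The main delicate step is maintaining $\alpha>0$ across the double sign flip at $p=5$, which is exactly why the critical exponent must be excluded from the Main Theorem; apart from that, the argument is a careful but standard Fredholm analysis of $L$ combined with explicit integration-by-parts identities.
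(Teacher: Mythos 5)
Your proposal is substantially correct and follows the same overall architecture as the paper: split $A_j$ into a decaying part and a multiple of $1+Q'/Q$ to isolate the nondecaying tail, impose the Fredholm condition to fix $\alpha$, and fix the residual kernel freedom via the orthogonalities, all nondegenerate because $\langle\Lambda Q,Q\rangle\neq 0$ for $p\neq 5$. The genuine variation is your derivation of $\theta$: you integrate $(-LA_j)'=F_j$ over $\RR$ and read off $\theta$ from the boundary flux $[-LA_j]_{-\infty}^{+\infty}=2\theta=\int F_j=\alpha\int\Lambda Q$, whereas the paper derives the equation for $\hat A_j$ explicitly, uses the identities \eqref{Q:iden}, \eqref{Q:iden2} to collect the nondecaying $Q^{p-1}$ and $Q^{2p-2}$ pieces, and then imposes that their integral against $\alpha\int\Lambda Q$ vanish; these give the same $\theta$ since $\int Q^{p-1}=\tfrac{2(p+1)}{p-1}$, but your flux argument avoids the explicit $L(Q'/Q)$ computations and is cleaner. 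Your identification of the origin of the extra source $2p\theta(Q^{p-1})'$ in \eqref{eq:A2} (the constant baseline $2\theta$ of $A_1$ seen near $z_2$ under $pR_2^{p-1}\,\cdot$ then $\p_y$) matches the paper's third step, and the sign bookkeeping for $\alpha>0$ is right. Two small imprecisions worth flagging: the terms $r\,\p_y^k\varphi$ belong to $K_2(r,\varphi)$, not to $\EE_{\tilde R}+I(r)\varphi$, so they are not part of the error in \eqref{for:I}; and your shorthand $A_j=\hat A_j\pm\theta(1+Q'/Q)$ needs the $\sigma$-dependent sign of the lemma statement to carry the flux computation for $A_2$ consistently (the paper itself records $A_2(-\infty)=2\theta$ while the split gives $-2\sigma\theta$, an internal sign that your framing handles more transparently than the paper does). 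Neither affects the validity of the argument.
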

\begin{remark}\label{remark4} Note that $\lim_{y \to + \infty} r(y)= 0 $ while the limit at $- \infty$ of $r(s,y)$ may be non-zero, in others words, the function $r(t,y)$ may have a tail on the left of two solitons which corresponds to a dispersion of size $e^{-z(t)}$ (in the integrable case $p=3$, we have $\theta = 0$ (see \eqref{theta}) so $r(y)$ has no tail, which is compatible to the property of integrable model).
\end{remark}
\begin{proof}[Proof of Lemma \ref{lem:A}] First, assume $A_1$ solves
\begin{equation}\label{A1:eq}
(-LA_1)' - \alpha_1 \Lambda Q - a_1 Q' =  - \sigma p c_Q \, \p_x (e^{-x}Q^{p-1})
\end{equation} 
and $A_2$ solves
\begin{equation}\label{A2:eq}
(-LA_2)' + 6 \theta_2 (Q^{p-1})' + \alpha_2 \sigma \Lambda Q + a_2 \sigma Q' = - p c_Q \, \p_x (e^{x}Q^{p-1}).
\end{equation}
To show $\alpha_1 = \alpha_2 = \alpha$, we multiply both sides of \eqref{A1:eq}, \eqref{A2:eq} with $Q$, integrate and use $L Q' =0$ and parity properties to obtain
\begin{align*}
&- \alpha_1 \int Q \Lambda Q  = - \sigma p c_Q \int (e^{-x}Q^{p-1})' Q =  \sigma c_Q \int e^{- x} Q^p
\end{align*}
and so
\begin{align*}
\alpha_ 1 = - \frac{\sigma c_Q\int e^{-x} Q^p }{\int Q \Lambda Q}.
\end{align*} 
Similarly, we also have
\begin{align*}
 \alpha_2 \sigma \int Q \Lambda Q  =  -p c_Q\int (e^{x}Q^{p-1})' Q = - c_Q\int e^{ x} Q^p = - c_Q \int e^{-x} Q^p
\end{align*} 
and so we deduce the unique possible value for $\alpha_1$ and $\alpha_2$
\[\alpha_ 1 =\alpha_2 = \alpha = -\frac{\sigma c_Q\int e^{-x} Q^p }{\int Q \Lambda Q}.\] 
Remark from \eqref{int:value} that in sub-critical cases $\int Q\Lambda Q > 0$ and in super-critical cases $\int Q \Lambda Q < 0$ thus by choice of sign of $\sigma$: in sub-critical cases $\sigma=-1$ and in super-critical cases $\sigma =1$, we have $\alpha > 0$ in both cases as required. By the parity of $Q$ and integration by parts, we obtain
\begin{align*}
\int e^{-x} Q^p dx =& \int \frac{e^{-x} + e^x}{2} Q^p dx= \int_0^\infty (e^{-x}+ e^x) Q^p dx \\
=& \lim_{R\to +\infty} \int_0^R (e^{-x}+ e^x) Q^p dx= \lim_{R\to +\infty} \int_0^R (e^{-x}+ e^x) (Q - Q'')dx\\
= & \lim_{R\to +\infty} \left( \int_0^R (e^{-x}+ e^x) Q\,dx + \int_0^R (-e^{-x}+ e^x) Q'dx - \left[(e^{-x}+e^x)Q'\right]\bigg|_0^R  \right)\\
= & \lim_{R\to +\infty} \left([(-e^{-x}+ e^x) Q]\bigg|_0^R - \left[(e^{-x}+e^x)Q'\right]\bigg|_0^R  \right) = 2\, c_Q =2 \left(2p+2\right)^{\frac{1}{p-1}}
\end{align*} 
using the asymptotic behavior \eqref{asyQ}. Combine with \eqref{int:value} to get
\begin{equation}\label{eq:alpha}
\alpha = \frac{ 8(p-1)}{|5-p|} \left(2p+2\right)^{\frac{2}{p-1}} \|Q\|_{L^2}^{-2}.
\end{equation}

Second, let us look for a solution $A_1$ under the form $A_1 = \hat{A}_1+ \theta_1 \left(1 +\frac{Q'}{Q} \right)$ then by \eqref{Q:iden}, \eqref{Q:iden2}, we deduce the equation of $\hat{A}_1$
\begin{multline*}
(- L\hat{A}_1)' - \theta_1 \left( - \frac{3p(p-1)}{p+1}Q^{p-1} + \frac{3 (3p-1)(p-1)}{(p+1)^2}Q^{2p-2}\right) \\- p \theta_1 (Q^{p-1})' - \alpha \Lambda Q - a_1 Q' = - \sigma p c_Q\p_x (e^{-x}Q^{p-1}).
\end{multline*}
To find $\hat{A}_1 \in \mathcal{Y}$, which implies $L \hat{A}_1 \in \mathcal{Y}$, we need to impose
\begin{equation}
\theta_1 \int \left(- \frac{3p(p-1)}{p+1}Q^{p-1} + \frac{3 (3p-1)(p-1)}{(p+1)^2}Q^{2p-2}\right)+ \alpha \int \Lambda Q = 0
\end{equation}
so from \eqref{int:value}, we get
\[\theta_1 = \frac{(p+1) \alpha \int \Lambda Q}{(p-1)\int Q^{p-1}}.\]
Similarly, we consider the equation of $\hat{A}_2$ and obtain the same equation for $\theta_2$
\begin{equation}
\sigma\theta_2 \int \left(- \frac{3p(p-1)}{p+1}Q^{p-1} + \frac{3 (3p-1)(p-1)}{(p+1)^2}Q^{2p-2}\right)+ \sigma\alpha \int \Lambda Q = 0
\end{equation}
thus 
\begin{equation}\label{theta}
\theta_1 = \theta_2 = \theta = \frac{(p+1) \alpha \int \Lambda Q}{(p-1)\int Q^{p-1}} = \frac{(p+1)(3-p)\alpha \int Q}{(p-1)^2 \int Q^{p-1}}.
\end{equation}
Next, let $Z \in \mathcal{Y}, \int Z Q' = 0$ be such that 
\[Z' = \theta \left(- \frac{3p(p-1)}{p+1}Q^{p-1} + \frac{3 (3p-1)(p-1)}{(p+1)^2}Q^{2p-2}\right) + \alpha \Lambda Q - \sigma p c_Q \p_x (e^{-x}Q^{p-1}).\]
Then it suffices to solve $ -  L (\hat{A}_1 + a_1 \Lambda Q) = Z$. Indeed, from properties of the linearized operator $L$, there exists unique $A \in \mathcal{Y}, \int A Q' = 0$ such that $ -LA =Z$. Therefore, we set $\hat{A}_1 = A - a_1 \Lambda Q$ solves the equation. We uniquely fix $a_1$ so that $\int \hat{A}_1 Q = 0$ as $\int Q \Lambda Q \neq 0$. It is straightforward to check that $A_1 = \hat{A}_1+ \theta \left(1 +\frac{Q'}{Q} \right)$ satisfies desired conditions. We do similarly for $A_2$. Note from the definition of $A_1, A_2$ that $A_1', A_2' \in \mathcal{Y}$ as $\hat{A}_1, \hat{A}_2 \in \mathcal{Y}$ and
\[\left(1 + \frac{Q'}{Q}\right)' = \frac{(p-1)Q^{p-1}}{p+1}  \in \mathcal{Y}.\]

Third, set
\[r (y; (\mu (t), z(t))) = e^{- z(t)} [A_1(y - z_1(t)) + A_2 (y - z_2(t))]. \]
By \eqref{Rtil}, we have
\begin{align*}
I(r) =& \p_y [\p_y^2 r - r + p (R_1^{p-1} + R_2^{p-1}) r] + O_{H^1}({|\mu_1| e^{-z}}) + O_{H^1}({|\mu_1| e^{-z}}).
\end{align*}
Consider 
\begin{multline} \p_y [\p_y^2 r - r + p (R_1^{p-1} + R_2^{p-1}) r]
= e^{- z} \left(-L A_1 \right)' (x- z_1) + e^{- z} \p_y\left(pR_1^{p-1} A_2(x-z_2) \right) \\+ e^{-z} \left(-L A_2 + 2p\theta Q^{p-1} \right)' (x- z_2) + e^{-z} \p_y\left( pR_2^{p-1}(A_1(x-z_1) - 2 \theta)\right).
\end{multline}
Using the estimates
\[|A_2(x-z_2)| \lesssim (1 + |x - z_2|^q)e^{-(x-z_2)}, \quad \mbox{ for } x>z_2 \]
and
\[|A_1(x-z_1) - 2 \theta| \lesssim (1 + |x - z_1|^q)e^{-(x-z_1)}, \quad \mbox{ for } x<z_1 \]
with $A_1', A_2' \in \mathcal{Y}$, we have that
			\begin{multline}I(r) =  e^{- z} \left(-L A_1 \right)' (x- z_1) + e^{-z} \left(-L A_2 + 2p\theta Q^{p-1} \right)' (x- z_2)\\ + O_{H^1}({|\mu_1| e^{-z}}) + O_{H^1}({|\mu_1| e^{-z}})+ \OO( e^{-\frac{3}{2}z}).
\end{multline}
Then, by \eqref{remark:varphi},
\[\|\varphi (t,y)/ (1 + e^{\frac{1}{2}(y - z_1(t))}) \|_{L^2} \lesssim e^{\frac{1}{4}z(t)}\] 
and we obtain
\begin{multline}
I(r) \varphi =  e^{- z} \left(-L A_1 \right)' (x- z_1)\psi (e^{-\frac{1}{2}z}y +1) + e^{-z} \left(-L A_2 + 2p\theta Q^{p-1} \right)' (x- z_2)\psi (e^{-\frac{1}{2}z}y +1)\\ + O_{H^1}({|\mu_1| e^{-z}}) + O_{H^1}({|\mu_1| e^{-z}})+ O_{H^1}( e^{-\frac{5}{4}z}).
\end{multline}

Forth, we deduce from \eqref{eq:Er}, the expansion of nonlinear interaction \eqref{eq:nonlinear}, the equation of $A_1$ \eqref{eq:A1} and $A_2$ \eqref{eq:A2} that 
\begin{align*}
&\EE_{\tilde{R}} + I(r) \varphi 
\\&= e^{-z}[\alpha \Lambda Q + a_1 Q' - \sigma p c_Q \p_y (e^{-y} Q^{p-1})] (y-z_1) \psi (e^{-\frac{1}{2}z}y +1)+\dot{\mu}_1 \Lambda \tilde{R}_1 - (\dot{z}_1 - \mu_1)\p_y \tilde{R}_1 \\
&+ e^{-z}[-\alpha \sigma\Lambda Q - a_2 \sigma Q' - p c_Q \p_y (e^{y} Q^{p-1})] (y-z_2) \psi (e^{-\frac{1}{2}z}y +1) + \dot{\mu}_2\sigma \Lambda \tilde{R}_2 - (\dot{z}_2- \mu_2)\sigma\p_y \tilde{R}_2\\ 
&+ \sigma  p c_Q e^{-z}\p_y (e^{-(y-z_1)}Q^{p-1}(y-z_1)) + p c_Q e^{-z}\p_y (e^{(y-z_2)}Q^{p-1}(y-z_2))\\
 &+ O_{H^1}(|\mu_1| z e^{-z}) +  O_{H^1}(|\mu_2| z e^{-z}) + O_{H^1} (e^{-\frac{5}{4} z})\\
&= (\dot{\mu}_1 +\alpha e^{-z})\Lambda \tilde{R}_1 - (\dot{z}_1 - \mu_1 - a_1 e^{-z})\p_y \tilde{R}_1\\
&+ (\dot{\mu}_2 - \alpha e^{-z})\sigma\Lambda \tilde{R}_2 -(\dot{z}_2 - \mu_2 +a_2 e^{-z})\sigma\p_y \tilde{R}_2\\
 &+ O_{H^1}(|\mu_1| z e^{-z}) +  O_{H^1}(|\mu_2| z e^{-z}) + O_{H^1} (e^{-\frac{5}{4} z})\\
&= \vec{m}_1 \cdot \vec{M}_1 +  \vec{m}_2 \cdot \vec{M}_2 + O_{H^1}(|\mu_1| z e^{-z}) +  O_{H^1}(|\mu_2| z e^{-z}) + O_{H^1} (e^{-\frac{5}{4} z})
\end{align*}
here we use ~\eqref{Rtil} and $\|Q(y - z_i)(1 - \varphi (y)) \|_{H^1}\lesssim \exp (- \frac{1}{4}e^{- \frac{1}{2}z}) \ll e^{- \frac{5}{4}z}, i =1,2$. Therefore, we obtain the estimation \eqref{for:I} as required.
\end{proof}
Finally, we will control other terms in \eqref{EV:total}. By the definition of $r(t,y)$, we have 
\[|r| + |\p_y r| +|\p_y^2 r| \lesssim e^{-z}.\]
Let consider $\frac{\p r}{\p z_k} = (-1)^k r - e^{-z} A_k' (y - z_k) $, as $A_k' \in \mathcal{Y}$ so we can control $\mu_k \frac{\p r}{\p z_k} \varphi$ as $\OO(|\mu_1| e^{-z})\varphi$. Moreover, recall \eqref{remark:varphi2}, $\left|\frac{\p \varphi(t,y)}{\p z_k}\right| \lesssim \left|\tilde{\varphi} (t,y)\right|$, we deduce that
\begin{align*}
K_1(r, \varphi)& =  \OO(|\mu_1| e^{-z})\left(\varphi +\tilde{\varphi}\right) + \OO(|\mu_2| e^{-z})\left(\varphi +\tilde{\varphi}\right) = \sum_{j=1}^2 O_{H^1}(|\mu_j| e^{-\frac{3}{4} z})
\end{align*}
since $\|\varphi (y)/ (1 + e^{\frac{1}{2}(y - z_1(t))}) \|_{L^2} + \|\tilde{\varphi} (t,y)/ (1 + e^{\frac{1}{2}(y - z_1(t))}) \|_{L^2} \lesssim e^{\frac{1}{4}z(t)}$. For $J_1(\tilde{r})$, note that $\tilde{r}$ does not depend on $\mu_1$, $\mu_2$ and by the product rule, the same way as we control $K_1(r,\varphi)$, we have $(\dot{z}_k - \mu_k) \frac{\p \tilde{z}}{\p z_k} = \OO (\left|\vec{m}_k\right|e^{-z}) \varphi$ thus
\[J_1(\tilde{r}) = \sum_{j=1}^2 \OO\left(|\vec{m}_j| e^{-z}\right)\left(\varphi +\tilde{\varphi} \right) =  \sum_{j=1}^2 O_{H^1}\left(|\vec{m}_j| e^{-\frac{3}{4} z}\right). \]
The term $J_2(\tilde{r})$ is quadratic in $\tilde{r}$ so $J_2(\tilde{r}) = O_{H^1}(e^{-2z})$. Recall from \eqref{remark:varphi1} that
\[\|\p_y^k \varphi (t,y)\|_{L^2} \lesssim  e^{\frac{1}{4}z(t)-\frac{k}{2}z(t)},\quad \mbox{for } k \in \mathbb{N}\]
so all terms in $K_2(r,\varphi)$ can be controlled in $H^1$ as $O_{H^1}\left(e^{-\frac{5}{4} z}\right)$. Therefore,
\begin{multline}
\|E \|_{H^1} \lesssim |\mu_1|z e^{-\frac{3}{4} z} + |\mu_2| z e^{- \frac{3}{4}z}+ e^{-\frac{5}{4} z} + \sum_{j=1}^2 \left|\vec{m}_j \right| \left( e^{-z} + |\mu_1| e^{-\frac{3}{4}z} + |\mu_2| e^{-\frac{3}{4}z}\right).
\end{multline}
The estimate \eqref{closeH:V} is a direct consequence of the definition of $r(s,y)$ (see Lemma \ref{lem:A}) and the choice \eqref{remark:varphi} of $\varphi (y)$.
\qed
\subsection{Modulation of the approximate solution} We state a standard modulation result around $V$ based on the Implicit Function Theorem (see e.g. Lemma 3.1 in \cite{MaMeinvent}) and we omit its proof.
\begin{lemma} [Modulation around $V$] \label{lem:mod} For $p \neq5$, there exist $\bar{\omega}_0 > 0 , \bar{z}_0>0, C>0$ such that if $w(t)$ is a solution of \eqref{rescale:eq} on some interval $I$ satisfying for some $0< \omega_0 < \bar{\omega}_0$, $z_0 > \bar{z}_0$
\begin{equation}\label{hypo:mod}
\forall t \in I, \qquad \inf_{z_1 - z_2 > z_0} \|w(t) - V(\cdot; (0,0, z_1, z_2)) \|_{H^1} \leq \omega_0.
\end{equation}
Then there exists a unique $\mathcal{C}^1$ function $\Gamma (t)= (\mu_1 (t), \mu_2(t), z_1(t), z_2(t))$ such that $w(t,y)$ decomposes on $I$ as
\begin{equation}
w(t,y) = V(y; \Gamma (t)) + \epsilon (t, y)
\end{equation}
which satisfies the orthogonality conditions
\begin{equation}\label{ortho}
\int \epsilon(t) \tilde{R}_1 (t) = \int \epsilon(t)\p_y \tilde{R}_1 (t) = \int \epsilon(t) \tilde{R}_2 (t) = \int \epsilon(t)\p_y \tilde{R}_2 (t) =0
\end{equation}
and for all $t\in I$
\begin{equation}
z(t) = z_1(t) - z_2(t) > z_0 - C \omega_0, \qquad \|\epsilon(t)\|_{H^1} + |\mu_1(t)|+|\mu_2(t)| \leq C \omega_0.
\end{equation}
Moreover, the equation of the rest term $\epsilon (t,y)$ writes
\begin{equation}\label{eq:eps}
\p_t \epsilon + \p_y \left(\p^2_y \epsilon - \epsilon + |V + \epsilon|^{p-1}(V +\epsilon) - |V|^{p-1}V \right) +  \vec{m}_1 \cdot \vec{M}_1 +  \vec{m}_2 \cdot \vec{M}_2  + E =0
\end{equation}
where $\vec{m}_k, \vec{M}_k$ and $E$ defined in Proposition \ref{prop:lead}.
\end{lemma}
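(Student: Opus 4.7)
The plan is to apply the quantitative Implicit Function Theorem to the orthogonality default map, exactly as in Lemma~3.1 of \cite{MaMeinvent}. On the open set $\Omega = \{(\mu_1,\mu_2,z_1,z_2) : |\mu_k| < \bar\omega_0,\ z_1 - z_2 > \bar z_0\}$, define
\[ F : H^1(\RR) \times \Omega \to \RR^4, \qquad F(w, \Gamma)_k = \la w - V(\cdot;\Gamma),\, \Phi_k(\Gamma) \ra, \]
where $(\Phi_1,\Phi_2,\Phi_3,\Phi_4) = (\tilde R_1,\, \p_y \tilde R_1,\, \tilde R_2,\, \p_y \tilde R_2)$. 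At any reference $\Gamma_0 = (0,0,z_1^0,z_2^0) \in \Omega$ with $w_0 = V(\cdot;\Gamma_0)$, one has $F(w_0,\Gamma_0) = 0$; the task is to invert $F$ in $\Gamma$ near this reference, uniformly as $z \to \infty$.

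The main computation is the Jacobian $D_\Gamma F(w_0,\Gamma_0)$. From the construction of $V$ and the refined profile $r$ of Lemma~\ref{lem:A} we have $\p_{\mu_1} V = \Lambda \tilde R_1 + O(e^{-z})$, $\p_{z_1} V = -\p_y \tilde R_1 + O(e^{-z})$, $\p_{\mu_2} V = \sigma \Lambda \tilde R_2 + O(e^{-z})$, $\p_{z_2} V = -\sigma \p_y \tilde R_2 + O(e^{-z})$. Using the exponential decay \eqref{asyQ}, all cross-interaction inner products such as $\la \Lambda \tilde R_1,\tilde R_2\ra$ or $\la \p_y \tilde R_1,\tilde R_2\ra$ are $O(z e^{-z})$, while $\la \p_y R_k, R_k\ra = 0$ by parity. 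Hence
\[ D_\Gamma F(w_0,\Gamma_0) = \mathrm{diag}\bigl( -\la \Lambda Q,Q\ra,\ \|Q'\|_{L^2}^2,\ -\sigma\la \Lambda Q,Q\ra,\ \sigma\|Q'\|_{L^2}^2 \bigr) + O(z e^{-z}). \]
Its determinant equals $\sigma^2 \la \Lambda Q,Q\ra^2 \|Q'\|_{L^2}^4 + O(ze^{-z})$, which is bounded away from zero uniformly in $\Gamma_0 \in \Omega$ (for $\bar z_0$ large), because by \eqref{int:value}
\[ \la \Lambda Q, Q\ra = \Bigl(\tfrac{1}{p-1} - \tfrac{1}{4}\Bigr) \|Q\|_{L^2}^2 \neq 0 \quad \Longleftrightarrow \quad p \neq 5. \]
This is precisely where the exclusion of the $L^2$-critical case enters.

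The quantitative IFT therefore yields a unique $\mathcal{C}^1$ map $w \mapsto \Gamma(w)$ on the $H^1$-ball of radius $\omega_0$ around each reference $w_0 = V(\cdot;\Gamma_0)$, solving $F(w,\Gamma(w)) = 0$. For a solution $w(t)$ of \eqref{rescale:eq} satisfying \eqref{hypo:mod} at each $t \in I$, one applies the local IFT at an approximate minimizer of the infimum and patches the local parametrizations: pointwise uniqueness is forced by the nondegeneracy of the Jacobian, and a standard open/closed continuity argument on the connected interval $I$ produces a single $\mathcal{C}^1$ function $\Gamma(t)$ on $I$. Setting $\epsilon(t) = w(t) - V(\cdot;\Gamma(t))$ yields the orthogonality \eqref{ortho} by construction and the bound $\|\epsilon(t)\|_{H^1} + |\mu_k(t)| \leq C\omega_0$ from Lipschitz continuity of $w \mapsto \Gamma(w)$. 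The $\mathcal{C}^1$ time regularity of $\Gamma(t)$ follows by differentiating the identity $F(w(t),\Gamma(t)) = 0$ in $t$ and inverting the Jacobian, using the $\mathcal{C}^1$ regularity of $w(t)$ with values in $H^{-2}$ provided by \eqref{rescale:eq}. Finally, equation \eqref{eq:eps} is obtained by substituting $w = V + \epsilon$ into \eqref{rescale:eq} and using the decomposition $\EE_V = \vec m_1 \cdot \vec M_1 + \vec m_2 \cdot \vec M_2 + E$ of Proposition~\ref{prop:lead}.

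The only genuinely delicate point is the \emph{uniform} invertibility of the Jacobian over the noncompact regime $z \to +\infty$; this rests on the exponential spatial separation of $\tilde R_1$ and $\tilde R_2$ (making all interaction integrals negligible) combined with the nonvanishing identity $\int Q \Lambda Q \neq 0$ for $p \neq 5$. Once this is established, everything else is routine IFT plus a continuation argument.
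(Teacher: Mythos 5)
Your proposal is correct and follows precisely the standard Implicit Function Theorem route that the paper cites (Lemma~3.1 of \cite{MaMeinvent}) without spelling out; in particular you correctly identify the uniform non-degeneracy $\la \Lambda Q, Q\ra \neq 0$ for $p\neq 5$ as the crux. One cosmetic remark: with $\Gamma$ ordered as $(\mu_1,\mu_2,z_1,z_2)$ and $(\Phi_1,\dots,\Phi_4)=(\tilde R_1,\p_y\tilde R_1,\tilde R_2,\p_y\tilde R_2)$ the leading Jacobian is a \emph{permutation} of a diagonal matrix rather than literally $\mathrm{diag}(\cdot)$ (it becomes diagonal upon reordering $\Gamma$ as $(\mu_1,z_1,\mu_2,z_2)$), but the determinant you compute and the invertibility conclusion are unaffected.
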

Note that the choice of the special orthogonality conditions \eqref{ortho} is related to the coercivity property \eqref{coer:eq}.
\section{Backward uniform estimates}\label{sec:uni}
Let $(\mu^{in}, z^{in})\in \RR \times (0, + \infty)$ to be chosen with $0<\mu^{in} \ll 1, z^{in} \gg 1$. Let $u(t,x)$ be solution of \eqref{gkdv} with initial data
\begin{align*}
u (t^{in}, x) =& \, V(x - t^{in}; (\mu^{in}, - \mu^{in}, z_{in}, -z^{in})) + \epsilon^{in} (x - t^{in})\\
=&\,  \left[ Q_{1 + \mu^{in}} (\cdot - z^{in}) +\sigma Q_{1 - \mu^{in}} (\cdot + z^{in})  \right] (x - t^{in}) \\&+ e^{- 2z^{in}}\left[\left(A_1(\cdot- z^{in})+ A_2(\cdot +z^{in})\right)\psi(e^{-z^{in}}\cdot+1)\right](x - t^{in})+ \epsilon^{in} (x - t^{in})
\end{align*}
where $\sigma= -1$, $\epsilon^{in} \equiv 0$ for sub-critical cases while $\sigma =1$, $\epsilon^{in}$ chosen in an appropriate way with $\|\epsilon^{in}\|_{H^1} \leq C (t^{in})^{-\frac{3}{2}}$ for super-critical cases (see Section \ref{sec:super}). By the renormalization \eqref{renor}, we consider $w(t,y) = u(t,y+t)$ solution of \eqref{rescale:eq} on some open interval containing $t^{in}$, observe that
\begin{align*}
&w(t^{in}) = V(y; (\mu^{in}, - \mu^{in}, z_{in}, -z^{in})), \quad \epsilon (t^{in}) = \epsilon^{in}(y),\\
&\mu_1(t^{in}) = - \mu_2(t^{in}) = \mu^{in} , \quad z_1(t^{in}) = - z_2(t^{in}) = z^{in}.
\end{align*}
Denote $\bar{z} = z_1 + z_2, \bar{\mu} = \mu_1 + \mu_2 $, we claim the following uniform estimates:
\begin{proposition} [Uniform backward estimates] \label{esti:prop} There exists $t_0 \gg 1$ such that for all $t^{in} > t_0$, there is a choice of parameters ($\mu^{in}, z^{in}$) with
\begin{equation}
\mu_1(t^{in}) = - \mu_2(t^{in}) = \mu^{in} =\sqrt{\alpha} e^{- z^{in}}, \qquad z_1(t^{in}) = - z_2(t^{in}) = z^{in} \gg 1
\end{equation}
such that the solution $u$ of \eqref{gkdv} exists and satisfies the hypothesis \eqref{hypo:mod} of Lemma \ref{lem:mod} on the rescaled frame $(t,y)$. Moreover, the decomposition given in Lemma \ref{lem:mod} of $u$ satisfies the following uniform estimates, for all $t\in [t_0,t^{in}]$
\begin{equation}\label{esti:uni}
\begin{aligned}
\left|z(t) - 2\log(\sqrt{\alpha} t) \right| \lesssim t^{- \frac{1}{16}},\quad \left|\mu(t)\right| \lesssim t^{-1}\\ 
|\bar{\mu}(t)|\lesssim t^{- \frac{9}{8}}, \quad |\bar{z}(t)|\lesssim t^{- \frac{1}{16}}, \quad\|\epsilon (t)\|_{H^1} \lesssim t^{- \frac{9}{8}}
\end{aligned}
\end{equation}
where 
\begin{equation}
\alpha = - \frac{\sigma c_Q\int e^{-x}Q^p}{\int Q \Lambda Q}=\frac{ 8(p-1)}{|5-p|} \left(2p+2\right)^{\frac{2}{p-1}} \|Q\|_{L^2}^{-2} >0.
\end{equation}
\end{proposition}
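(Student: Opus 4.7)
The plan is a backward-in-time bootstrap argument on a maximal interval $[T^\ast,t^{in}]$. I would introduce bootstrap assumptions consisting of the five inequalities of \eqref{esti:uni} with a fixed large constant $K$ replacing the implicit constant, and then show by continuity that the assumptions can always be upgraded with $K$ replaced by $K/2$, forcing $T^\ast=t_0$. At $t=t^{in}$ the choice $\mu^{in}=\sqrt\alpha e^{-z^{in}}$ gives exactly $z(t^{in})=2\log(\sqrt\alpha t^{in})$ and $\mu(t^{in})=1/t^{in}$, so the bootstrap holds initially with room to spare; for sub-critical $p$ one has $\epsilon(t^{in})=0$, while for super-critical $p$ a suitable $\epsilon^{in}$ with $\|\epsilon^{in}\|_{H^1}\lesssim (t^{in})^{-3/2}$ will be chosen via a topological argument (last step). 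On the bootstrap interval, Lemma~\ref{lem:mod} provides the decomposition $w=V+\epsilon$; differentiating the orthogonality conditions \eqref{ortho} in time and substituting \eqref{eq:eps} produces a $4\times 4$ algebraic system for $(\vec m_1,\vec m_2)$ whose matrix is, at leading order, diagonal with entries $\la\Lambda Q,Q\ra$ and $-\la Q',Q'\ra$. Inverting it and using \eqref{e:R} yields the pointwise bound $|\vec m_1|+|\vec m_2|\lesssim \|\epsilon\|_{H^1}^2+|\mu|z\,e^{-3z/4}+e^{-5z/4}$, of size $O(t^{-9/4})$ under the bootstrap.

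With $\vec m_j$ controlled, the modulation equations become a closed ODE system for $(\mu,z,\bar\mu,\bar z)$. The antisymmetric block reads $\dot\mu=-2\alpha e^{-z}+O(t^{-9/4})$ and $\dot z=2\mu+O(e^{-z})$; differentiating once gives the Newton-type equation $\ddot z=-4\alpha e^{-z}+O(t^{-9/4})$, and integration backward from the initial conditions $z(t^{in})=2\log(\sqrt\alpha t^{in})$, $\mu(t^{in})=1/t^{in}$ produces $z(t)=2\log(\sqrt\alpha t)+O(t^{-1/16})$ and $|\mu(t)-1/t|\lesssim t^{-17/16}$, strictly improving the bootstrap on $z$ and $\mu$. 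The symmetric block $\dot{\bar\mu}=O(t^{-9/4})$ and $\dot{\bar z}=\bar\mu+O(t^{-2})$, integrated from the symmetric initial data $\bar\mu(t^{in})=\bar z(t^{in})=0$, yields $|\bar\mu|\lesssim t^{-9/8}$ and $|\bar z|\lesssim t^{-1/16}$ as asserted.

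The crucial step is closing the bootstrap on $\|\epsilon\|_{H^1}$. For this I would adapt the Lyapunov-type functional of \cite{MMT2}, \cite{CMM}, namely
\begin{equation*}
\mathcal{F}(t)=\int\Big[\tfrac12(\p_y\epsilon)^2+\tfrac12\epsilon^2-\tfrac{1}{p+1}\big(|V+\epsilon|^{p+1}-|V|^{p+1}-(p+1)|V|^{p-1}V\epsilon\big)\Big]dy+\sum_{k=1,2}(1+\mu_k)\!\int\tfrac12\epsilon^2\phi_k\,dy,
\end{equation*}
where $\phi_k$ are smooth cut-offs localizing $L^2$-mass near each soliton. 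By the orthogonality \eqref{ortho} and the coercivity \eqref{coer:eq} in the sub-critical case (respectively \eqref{coer:eq1} in the super-critical case), $\mathcal{F}$ is comparable to $\|\epsilon\|_{H^1}^2$ up to the scalar projections on $Z^\pm$ in the super-critical case. Using \eqref{eq:eps} together with the bound $\|E\|_{H^1}\lesssim t^{-5/2}$ from the first step, a direct computation gives $|d\mathcal{F}/dt|\lesssim \|\epsilon\|_{H^1}\|E\|_{H^1}+t^{-3}\|\epsilon\|_{H^1}^2\lesssim t^{-5/2}\|\epsilon\|_{H^1}+t^{-3}\|\epsilon\|_{H^1}^2$. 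Integrating backward from $t^{in}$, where $\mathcal{F}(t^{in})\lesssim (t^{in})^{-3}$, and closing by Gronwall yields $\|\epsilon(t)\|_{H^1}\lesssim t^{-9/8}$, strictly improving the bootstrap.

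In the super-critical case, coercivity \eqref{coer:eq1} holds only modulo the two scalar projections $\la\epsilon,Z^\pm(\cdot-z_k)\ra$; backward in time the $Z^+$ projection is dynamically stable while $Z^-$ is unstable. Following the topological shooting of \cite{CMM} I would parametrize $\epsilon^{in}=\sum_{k=1,2}b_k^-Z^-(\cdot-z_k^{in})$ with free parameter $b^-=(b_1^-,b_2^-)\in\R^2$ of size $(t^{in})^{-3/2}$, show by linearization around the soliton that the projection $t\mapsto\la\epsilon(t),Z^-(\cdot-z_k(t))\ra$ satisfies a strictly outgoing ODE when it is too large, and apply Brouwer's theorem to select $b^-$ along which $|\la\epsilon(t),Z^-(\cdot-z_k(t))\ra|\lesssim t^{-9/8}$ on all of $[t_0,t^{in}]$. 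The main obstacle is the tight coupling of these ingredients: the refined ansatz of Proposition~\ref{prop:lead} is what produces $\|E\|_{H^1}\sim t^{-5/2}$, which is exactly what is needed to drive the Lyapunov estimate down to $\|\epsilon\|_{H^1}\lesssim t^{-9/8}$, which in turn is what closes the modulation identities and yields the $t^{-1/16}$ accuracy for $z$ and $\bar z$; any mismatch in these decay rates breaks the bootstrap, which is why the exponents $9/8$ and $1/16$ arise together.
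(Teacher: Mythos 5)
Your overall architecture — bootstrap on $[T^*,t^{in}]$, modulation estimates, a localized Lyapunov functional for $\epsilon$, and a topological argument for the supercritical unstable direction — matches the paper's. But there is one genuine gap that would stop the proof, and a few over-optimistic estimates.

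\textbf{Main gap: closing the bound on $z$ cannot be done by direct integration.} You claim that integrating the Newton-type equation backward from $z(t^{in})=2\log(\sqrt\alpha\,t^{in})$ ``produces $z(t)=2\log(\sqrt\alpha t)+O(t^{-1/16})$.'' This is false. What the modulation equations actually give, after passing to $\zeta(t)=e^{z(t)/2}/\sqrt\alpha$ and using the first integral $\mu^2-4\alpha e^{-z}=O(t^{-2-1/8})$, is the \emph{neutral} equation $|\dot\zeta-1|\lesssim t^{-1/8}$. Integrating this from $t^{in}$ down to $t$ with $\zeta(t^{in})=t^{in}$ yields
\[
|\zeta(t)-t|\lesssim\int_t^{t^{in}}\tau^{-1/8}\,d\tau\lesssim (t^{in})^{7/8},
\]
which is of size $(t^{in})^{7/8}$, not $t^{15/16}$; for $t$ near a fixed $t_0$ this is arbitrarily large compared to the bootstrap target $|\zeta(t)-t|\leq t^{15/16}$, so the bootstrap does not close. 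The separation $z(t)$ is one integration removed from the decaying quantities, and the error accumulates from the future. The paper resolves this by a topological shooting on $z^{in}$ (equivalently on $\zeta^{in}\in[t^{in}-(t^{in})^{15/16},\,t^{in}+(t^{in})^{15/16}]$), setting $\xi(t)=(\zeta(t)-t)^2 t^{-15/8}$ and proving the outgoing transversality $\dot\xi(t^*)<-(t^*)^{-1}$ at any exit time; continuity plus the intermediate value theorem then produces a good $z^{in}$. This shooting is needed in \emph{both} the sub- and super-critical cases. In the super-critical case it has to be coupled with the $a^-$ shooting into one Brouwer fixed-point argument on $\mathbb{D}=[-1,1]\times B_{\R^2}(0,1)$, whereas your proposal only shoots on $a^-$. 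Without this step your argument has no mechanism to pin down the phase along the Newton orbit.

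\textbf{Secondary issues (not fatal, but worth flagging).} (i) Your claim $|\vec m_1|+|\vec m_2|\lesssim\|\epsilon\|_{H^1}^2+\dots$ is too strong for the position components: the projection on $\p_y\tilde R_j$ produces a term $\int\epsilon\,L_j(\p_y^2\tilde R_j)$ which is linear in $\epsilon$, so one only gets $|\dot z_j-\mu_j|\lesssim\|\epsilon\|_{L^2}+\|E\|_{L^2}+e^{-z}\lesssim t^{-9/8}$ (the paper's \eqref{eq:z}), not $t^{-9/4}$. (ii) The bound $\|E\|_{H^1}\lesssim t^{-5/2}$ misses a $\log t$ factor from $|\mu_j|\,z\,e^{-3z/4}$; the correct statement is $\|E\|_{H^1}\lesssim t^{-9/4}$, which still suffices. (iii) Your two-sided estimate $|d\mathcal F/dt|\lesssim\|\epsilon\|\,\|E\|+t^{-3}\|\epsilon\|^2$ cannot hold as stated: the term $-\frac12\int(\p_y\epsilon)^2\p_y\Phi_1$ is positive and only bounded by $(|\mu_1|+|\mu_2|)\,e^{-2\rho z}\,\|\epsilon\|^2\sim t^{-9/8}\|\epsilon\|^2$, not $t^{-3}\|\epsilon\|^2$. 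Only a \emph{one-sided} lower bound $\dot{\mathcal F}\geq -C(t^{-9/8}\|\epsilon\|^2+t^{-9/4}\|\epsilon\|)$ is available, which is exactly what the paper proves and what backward integration requires. (iv) For the super-critical final data, parametrizing $\epsilon^{in}$ by only $b^-=(b_1^-,b_2^-)$ destroys the six required orthogonality relations at $t^{in}$; the paper's Lemma~\ref{lem:mode} uses eight free parameters $b\in\R^8$ (including components along $\tilde Z^+_k$, $\tilde R_k$, $\p_y\tilde R_k$) precisely to reimpose them while hitting the prescribed values of $\la\epsilon^{in},\tilde Z^-_k\ra$.
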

Notice in Proposition \ref{esti:prop} that all estimates are independent of $t^{in}$, thus the distance between $u(t)$ and the approximate solution $V(t)$ depends only on $t$ and not on the time $t^{in}$ where $u(t)$ was taken equal to $V(t)+\epsilon^{in}$.
\subsection{Proof of the uniform estimates in sub-critical cases}\label{sec:sub}
\subsubsection{Bootstrap bounds} The proof of Proposition \ref{esti:prop} follows from bootstrapping the following estimates
\begin{equation}\label{boot1}
|\mu_1 + \mu_2| \leq t^{-\frac{9}{8}}
\end{equation}
\begin{equation}\label{boot2}
|z_1 + z_2| \leq t^{-\frac{1}{16}}
\end{equation}
\begin{equation}\label{boot3}
\|\e \|_{H^1} \leq t^{-\frac{9}{8}}
\end{equation}
\begin{equation}\label{boot4}
\left|  \frac{e^{\frac{1}{2}z}}{ \sqrt{\alpha}} - t \right|\leq t^{\frac{15}{16}}
\end{equation}
\begin{equation}\label{boot5}
\frac{1}{2} t^{-1} \leq \mu \leq 2 t^{-1}.
\end{equation}
The bootstrap regime implies immediately that
\begin{equation}\label{esti:z}
z (t) = 2 \log t + \log \alpha + O(t^{- \frac{1}{16}})
\end{equation}
and
\begin{equation}\label{esti:mu}
|\mu_1| + |\mu_2| \lesssim t^{-1}, \qquad |z_1| + |z_2| \lesssim \log t.
\end{equation}
For $t_0$ to be chosen large enough (independent of $t^{in}$), and all $t^{in}>t_0$, we define in view of Lemma \ref{lem:mod}:
\[t^* = \inf \{ \tau \in [t_0, t^{in}];\, \eqref{boot1}-\eqref{boot5} \mbox{ hold on }[\tau, t^{in}] \} .\]
\subsubsection{Control of modulation equation}
\begin{lemma}[Pointwise control of the modulation equations and the error]\label{lem:point} The following estimates hold on $[t_0, t^{in}]$
\begin{equation}\label{eq1}
\left|\dot{\mu}_1(t) + \alpha e^{-z(t)} \right| + \left|\dot{\mu}_2(t) - \alpha e^{-z(t)} \right| \lesssim t^{- \frac{9}{4}},
\end{equation}
\begin{equation}\label{eq2}
\left|\mu_j(t) - \dot{z}_j(t) \right| \lesssim t^{- \frac{9}{8}}.
\end{equation}
and
\begin{equation}\label{eqE}
\| E (t,\cdot)\|_{H^1} \lesssim t^{- \frac{9}{4}}.
\end{equation}
\end{lemma}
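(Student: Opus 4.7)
The bounds on $\vec{m}_j$ follow from the usual trick of differentiating in time the four orthogonality conditions \eqref{ortho}. Substituting the equation \eqref{eq:eps} for $\p_t\epsilon$ and using $\p_t\tilde R_k=\dot\mu_k\Lambda\tilde R_k-\dot z_k\p_y\tilde R_k$ produces, after integration by parts, a $4\times 4$ linear system of the schematic form $M(t)\,(\vec m_1,\vec m_2)^{\top}=\vec b(t)$, where $M$ has entries built from scalar products between $\{\Lambda\tilde R_k,\p_y\tilde R_k\}$ and $\{\tilde R_\ell,\p_y\tilde R_\ell\}$, plus additional contributions coming from the term $\dot\mu_k\int\epsilon\,\Lambda\tilde R_k-\dot z_k\int\epsilon\,\p_y^2\tilde R_k$ that are linear in $\vec m_j$ with $O(\|\epsilon\|_{H^1})$ coefficients. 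The source $\vec b$ is produced by the $H^1$ scalar product of the linear and nonlinear terms of the flow with $\tilde R_k$, $\p_y\tilde R_k$, together with $\int E\,\tilde R_k$ and $\int E\,\p_y\tilde R_k$.

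\textbf{Inversion of the principal matrix.} Using the exponential decay \eqref{asyQ} and the separation $z\gg 1$, all cross scalar products like $\int\Lambda\tilde R_1\tilde R_2$, $\int\p_y\tilde R_1\tilde R_2$ and their derivatives contribute $O(e^{-z})$, whereas on the diagonal one finds the leading blocks
\[\begin{pmatrix}\int\Lambda\tilde R_k\tilde R_k & 0\\ 0 & \int(\p_y\tilde R_k)^2\end{pmatrix}+O(\|\epsilon\|_{H^1}+e^{-z}),\]
using parity (odd times even integrates to zero). Because $p\neq 5$ one has $\int Q\Lambda Q\neq 0$ by \eqref{int:value}, so these $2\times 2$ blocks are uniformly invertible, and after using the bootstrap smallness of $\|\epsilon\|_{H^1}$ and $e^{-z}$ the full matrix $M$ is invertible with $\|M^{-1}\|\lesssim 1$.

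\textbf{Control of the source and closure of the bootstrap.} The contributions of the linear and nonlinear terms in $\epsilon$ to $\vec b$ are bounded by $\|\epsilon\|_{H^1}^{2}+e^{-z/2}\|\epsilon\|_{H^1}$ (the linear-in-$\epsilon$ part benefits from one localization factor $e^{-z/2}$ via integration by parts and the exponential decay of $\tilde R_k$), and by the bootstrap \eqref{boot3} and \eqref{esti:z} this is $\lesssim t^{-9/4}$. The $E$-contribution is controlled by $\|E\|_{H^1}$, which by \eqref{e:R} splits into terms that are either purely geometric ($|\mu_j|ze^{-3z/4}+e^{-5z/4}\lesssim t^{-9/4}$ under the bootstrap) or proportional to $|\vec m_j|$ with a small prefactor $e^{-z}+|\mu_j|e^{-3z/4}\lesssim t^{-3/2}$. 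The latter can be absorbed on the left-hand side. Putting everything together yields $|\vec m_1|+|\vec m_2|\lesssim t^{-9/4}$, which gives \eqref{eq1} directly and, via $\dot z_j-\mu_j=(\vec m_j)_2+a_je^{-z}=O(t^{-9/4})+O(t^{-2})$, also \eqref{eq2} (with room to spare). Feeding these back into \eqref{e:R} finally produces \eqref{eqE}.

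\textbf{Main obstacle.} The only delicate point is the self-referential character of the bound: $\|E\|_{H^1}$ is controlled by $|\vec m_j|$ and vice versa through the orthogonality system. The mechanism that closes the loop is the smallness of the coupling coefficient $e^{-z}+|\mu_j|e^{-3z/4}$ in \eqref{e:R} compared with the invertibility constants of the principal blocks of $M$; once $t_0$ is taken sufficiently large this coefficient is $\ll 1$ and the absorption on the left-hand side is legitimate, which is why the estimates are independent of $t^{in}$.
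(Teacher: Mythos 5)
Your overall strategy — differentiate the orthogonality conditions, substitute the $\epsilon$ equation, integrate by parts, invert the (essentially block-diagonal) principal matrix, and absorb the $\vec m_j$-dependent part of $\|E\|_{H^1}$ using the smallness of the coupling coefficient — is exactly the paper's mechanism, just phrased as a $4\times 4$ linear system instead of a chain of scalar inequalities. So the architecture is right.

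There is, however, a genuine gap in your control of the source term $\vec b$, and it shows up in the discrepancy between your claimed $|\vec m_1|+|\vec m_2|\lesssim t^{-9/4}$ and what the paper actually proves, namely \eqref{eq1} at order $t^{-9/4}$ but \eqref{eq2} only at $t^{-9/8}$. The two modulation equations are \emph{not} symmetric, and your assertion that ``the linear-in-$\epsilon$ part benefits from one localization factor $e^{-z/2}$ via integration by parts'' fails for the translation equations. Concretely: when you differentiate $\int\epsilon\,\tilde R_k=0$ the linear term, after integration by parts, produces $\int\epsilon\,L_k(\p_y\tilde R_k)$, and since $L_k(\p_y\tilde R_k)=\mu_k\p_y\tilde R_k$ this is killed at leading order by the orthogonality $\int\epsilon\,\p_y\tilde R_k=0$, leaving a genuine gain and a $\|\epsilon\|_{L^2}^2$-type bound, which is the content of \eqref{eq:mu}. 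But when you differentiate $\int\epsilon\,\p_y\tilde R_k=0$ the analogous term is $\int\epsilon\,L_k(\p_y^2\tilde R_k)$, and $L_k(\p_y^2\tilde R_k)=\mu_k\p_y^2\tilde R_k+p(p-1)\tilde R_k^{p-2}(\p_y\tilde R_k)^2$: the second piece has no small prefactor and is \emph{not} orthogonal to $\epsilon$, so this term contributes $O(\|\epsilon\|_{L^2})$ with no improvement, precisely the $\|\epsilon\|_{L^2}$ in \eqref{eq:z}. Under the bootstrap this gives only $|\dot z_j-\mu_j|\lesssim t^{-9/8}$, not $t^{-9/4}$. Your final conclusions \eqref{eq1}, \eqref{eq2}, \eqref{eqE} still hold because \eqref{eq2} is only asked at order $t^{-9/8}$ and the $\vec m_j$-dependence in \eqref{e:R} carries an extra $e^{-z}\lesssim t^{-2}$, but the stated uniform bound on $\vec m_j$ is incorrect, and the distinction between the scaling and translation modulation equations — which is the reason the paper's Remark~\ref{remark5} emphasizes the need for the refined ansatz $\tilde r$ — is missed.
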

\begin{proof} [Proof of Lemma \ref{lem:point}]
We claim the following estimates for the modulation equations
\begin{equation}\label{eq:mu}
\left|\dot{\mu}_1(t) + \alpha e^{-z(t)} \right| + \left|\dot{\mu}_2(t) - \alpha e^{-z(t)} \right| \lesssim \|\epsilon (t) \|^2_{L^2} + z(t) e^{-z(t)}\|\epsilon\|_{L^2} + \|E\|_{L^2}
\end{equation}
\begin{equation}\label{eq:z}
\left|\mu_j(t) - \dot{z}_j (t)\right| \lesssim \|\epsilon \|_{L^2} + \|E \|_{L^2} + e^{- z(t)}.
\end{equation}
From \eqref{e:R}, \eqref{boot3}, \eqref{esti:mu}, we have
\begin{equation}\label{esti:E}
\| E\|_{H^1} \lesssim t^{- \frac{5}{2}}\log (t) + t^{- \frac{5}{2}} + \sum_{j= 1}^2 |\vec{m}_j|t^{-2}\lesssim t^{-\frac{9}{4}} + \sum_{j= 1}^2 |\vec{m}_j|t^{-2}.
\end{equation}
Therefore $\left|\dot{\mu}_1(t) + \alpha e^{-z(t)} \right| + \left|\dot{\mu}_2(t) - \alpha e^{-z(t)} \right| \lesssim t^{- \frac{9}{4}}, \left|\mu_j - \dot{z}_j \right| \lesssim t^{- \frac{9}{8}}$ follow from the bootstrap bounds on $\| \epsilon\|_{H^1}$ and $e^{-z(t)}$.
In order to prove \eqref{eq:mu}, \eqref{eq:z}, recall the equation of $\epsilon$, we have
\[
\p_t \epsilon + \p_y \left(\p^2_y \epsilon - \epsilon + |V + \epsilon|^{p-1}(V + \epsilon) - |V|^{p-1}V \right) + \sum_{j=1}^2\vec{m}_j\cdot  \vec{M}_j + E =0
\]
with 
\begin{multline*}
E = \p_y [|\tilde{R}_1 + \sigma \tilde{R}_2|^{p-1}(\tilde{R}_1 + \sigma \tilde{R}_2) - \tilde{R}_1^p - \sigma \tilde{R}_2^p] + I(r)\varphi + K(r,\varphi) + H(\tilde{r}) \\- e^{- \frac{1}{2} z(t)} r \varphi' + e^{- \frac{3}{2} z(t)} r \varphi''' + 3 e^{-  z(t)} \p_y r \varphi'' + 3 e^{- \frac{1}{2} z(t)}  \p_y^2r \varphi'.
\end{multline*}
From the orthogonality condition $\int \epsilon \tilde{R}_k =0$, we expand $\frac{\p}{\p t} \int \epsilon \tilde{R}_1$ and using the equation of $\epsilon (t)$ to obtain
\begin{align*}
0 =& \frac{\p}{\p t} \int \epsilon \tilde{R}_1\\
=& \int \epsilon \p_t \tilde{R}_1 + \int (\p_y^2 \epsilon - \epsilon + p |V|^{p-1} \epsilon) \p_y \tilde{R}_1 \\
&+ \int \left(|V + \epsilon|^{p-1}(V + \epsilon) - |V|^{p-1}V - p |V|^{p-1}  \epsilon \right)\p_y \tilde{R}_1\\
&- (\dot{\mu}_1 +\alpha e^{-z})\int\Lambda \tilde{R}_1 \tilde{R}_1 + \sigma(\mu_2 - \alpha e^{-z})\int\Lambda \tilde{R}_2 \tilde{R}_1 - \int E \tilde{R}_1.
\end{align*}
Using \eqref{closeH:V}, the equation of $Q_v$ and $\frac{\p \tilde{R}_j}{\p \mu_j} = \Lambda \tilde{R}_j$, $\frac{\p \tilde{R}_j}{\p y_j} = - \p_y \tilde{R}_j$, we get
\begin{align*}
0 =\frac{\p}{\p t} \int \epsilon \tilde{R}_1=& \dot{\mu}_1\int \epsilon \Lambda \tilde{R}_1 - (\dot{z}_1 - \mu_1)\int \epsilon \p_y \tilde{R}_1  + \|\epsilon \|_{L^2} O (z e^{-z}) + O(\|\epsilon\|_{L^2}^2)\\
&- (\dot{\mu}_1 +\alpha e^{-z})\int\Lambda \tilde{R}_1 \tilde{R}_1 + \sigma(\dot{\mu}_2 - \alpha e^{-z})O(z e^{-z}) - \int E \tilde{R}_1
\end{align*}
so for $z \gg 1$ and $\|\epsilon\|_{H^1} \ll 1$
\begin{multline}\label{e:1}
|\dot{\mu}_1 +\alpha e^{-z}| \leq C \left(\|\epsilon\|_{L^2}^2 + z e^{-z}\|\epsilon\|_{L^2}^2 + \|E\|_{L^2} \right)\\ + |\dot{z}_1 - \mu_1|O (\|\epsilon\|_{L^2}) + |\dot{\mu}_2 - \alpha e^{-z}| O(ze^{-z}).
\end{multline}
Next, we consider $\int \epsilon \p_y \tilde{R}_1 = 0$ so
\begin{align*}
0 =& \frac{\p}{\p t} \int \epsilon \p_y\tilde{R}_1\\
=& \int \p_y \epsilon \p_t  \tilde{R}_1 + \int (\p_y^2 \epsilon - \epsilon + p |V|^{p-1}  \epsilon) \p_y^2 \tilde{R}_1 \\
&+ \int \left(|V + \epsilon|^{p-1}(V + \epsilon) - |V|^{p-1}V - p |V|^{p-1}  \epsilon\right)\p_y^2 \tilde{R}_1\\
&+ (\dot{z}_1 - \mu_1 - a_1 e^{-z})\int\p_y \tilde{R}_1 \p_y\tilde{R}_1 +\sigma (\dot{z}_2 - \mu_2 + a_2 e^{-z})\int\p_y \tilde{R}_2 \p_y \tilde{R}_1 - \int E \p_y \tilde{R}_1\\
=& \dot{\mu}_1\int \p_y \epsilon \Lambda \tilde{R}_1 - (\dot{z}_1 - \mu_1)\int \p_y \epsilon \p_y \tilde{R}_1  + \|\epsilon \|_{H^1} O (|\mu_1|) + O(\|\epsilon\|_{L^2})\\
&+ (\dot{z}_1 - \mu_1 - a_1 e^{-z})\int\p_y \tilde{R}_1 \p_y\tilde{R}_1 + \sigma(\dot{z}_2 - \mu_2 + a_2 e^{-z})\int\p_y \tilde{R}_2 \p_y \tilde{R}_1 - \int E \p_y\tilde{R}_1
\end{align*}

so we deduce that
\begin{multline}\label{e:2}
|\dot{z}_1 - \mu_1| \leq |a_1| e^{-z} + C\| \epsilon\|_{H^1} \left(1+ |\mu_1|+ e^{-z} \right) + C \|E \|_{L^2}\\
+ |\dot{\mu}_1 +\alpha e^{-z}| O (\|\epsilon\|_{L^2})+ |\dot{z}_2 - \mu_2| O(z e^{-z}) + O (ze^{-2z}).
\end{multline}
Combining two estimates \eqref{e:1}, \eqref{e:2} with their analogues for $|\dot{\mu}_2 +\alpha e^{-z}|$ and $|\dot{z}_2 - \mu_2|$, the estimates \eqref{eq:mu},\eqref{eq:z} are proved. Finally, the estimate \eqref{eqE} is a direct consequence of \eqref{eq1}, \eqref{eq2} and \eqref{esti:E}.
\end{proof}
\subsubsection{Energy functional} We introduce a nonlinear energy functional for $\epsilon(t)$: choose $\rho = \frac{1}{32}$ and set
\[\phi (y) = \frac{2}{\pi} \arctan (\exp (8\rho y)) \]
so that $\displaystyle\lim_{y \to - \infty} \phi = 0$ and $\displaystyle\lim_{y \to + \infty} \phi = 1$. We see that $\forall y \in \RR$,
\[\phi (-y) = 1 - \phi (y), \qquad \phi'(y) = \frac{8\rho}{\pi \cosh (8 \rho y)},\]
\[|\phi''(y) |\leq 8 \rho |\phi'(y)|, \qquad |\phi '''(y)| \leq (8 \rho)^2 |\phi'(y)|.\]
Let 
$$\Phi_1 (t,y) = \frac{\phi(y)}{(1 + \mu_1(t))^2} + \frac{1 -\phi(y)}{(1+ \mu_2(t))^2}, \qquad\Phi_2 (t,y) = \frac{\mu_1(t)\phi(y)}{(1 + \mu_1(t))^2} + \frac{\mu_2(t)(1 -\phi(y))}{(1+ \mu_2(t))^2}$$ 
and consider
\begin{multline}
\WW (t) = \int \bigg[ \left((\p_y\epsilon)^2 + \epsilon^2 - \frac{2}{p+1} \left(|\epsilon+V|^{p+1} - |V|^{p+1} - (p+1)|V|^{p-1} V \epsilon\right) \right)\Phi_1(t,y)\\ + \epsilon^2 \Phi_2 (t,y)\bigg]dy.
\end{multline}
The functional $\WW$ is coercive in $\epsilon$ at the main order and it is an almost conserved quantity for the problem (see \cite{MaMeinvent}, \cite{RaSz11} for a similar functional).
\begin{proposition}[Coercivity and time control of energy functional]\label{prop:energy} For all $t \in [t^*, t^{in}]$,
\begin{equation}\label{coer:W}
\|\epsilon (t)\|_{H^1}^2 \lesssim \WW(t)
\end{equation}
and
\begin{equation}\label{conser:W}
\frac{\p}{\p t}\WW(t) \geq - C_0\, \left( t^{- \frac{9}{8}}\|\epsilon\|_{H^1}^2 + t^{-\frac{9}{4}}\|\epsilon\|_{H^1} \right)
\end{equation}
where $C_0>0$ a constant independent of $t^{in}$.
\end{proposition}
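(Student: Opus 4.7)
The claim splits into the static coercivity \eqref{coer:W} and the dynamic inequality \eqref{conser:W}; I would handle them in that order, following the virial / modulation philosophy of Martel--Merle.

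\emph{Coercivity.} The goal is to localize the one-soliton coercivity \eqref{coer:eq} via the partition $\phi + (1-\phi)\equiv 1$. Since $\tilde R_1$ is centered near $z_1 \approx z/2$ and $\tilde R_2$ near $z_2 \approx -z/2$, while $\phi$ transitions on a fixed scale around $y=0$, the cross product $\tilde R_1 \tilde R_2$ is $O(e^{-z})$ and the nonlinear quadratic piece of $\WW$ Taylor-expands as $\tfrac{p}{(1+\mu_1)^2}\!\int R_1^{p-1}\epsilon^2 \phi + \tfrac{p}{(1+\mu_2)^2}\!\int R_2^{p-1}\epsilon^2(1-\phi)$ up to $O((|\mu_1|+|\mu_2|+e^{-z/4})\|\epsilon\|_{H^1}^2)$. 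Combining with the $\epsilon^2 \Phi_2$ term and rescaling $\wt\epsilon_k(\cdot)=\epsilon(\cdot+z_k)/(1+\mu_k)^{1/(p-1)}$ on each half-line, the quadratic form becomes $\sum_{k=1}^2(1+\mu_k)^{-1/2}\la L\wt\epsilon_k,\wt\epsilon_k\ra$ plus negligible errors. The orthogonality conditions \eqref{ortho} translate into $|\la\wt\epsilon_k,Q\ra|+|\la\wt\epsilon_k,Q'\ra|\lesssim (|\mu_k|+e^{-z/4})\|\epsilon\|_{H^1}$, so \eqref{coer:eq} applies on each side and produces \eqref{coer:W} once $t_0$ is taken large enough to absorb the small errors.

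\emph{Differential inequality.} I would differentiate $\WW$ directly, using \eqref{eq:eps} for $\p_t\epsilon$, and split $\tfrac{d}{dt}\WW$ into four groups:
\begin{enumerate}[(i)]
\item $\p_t\Phi_1,\p_t\Phi_2$ are linear in $\dot\mu_1,\dot\mu_2$ times bounded functions; by \eqref{eq1} they contribute $O(|\dot\mu_1|+|\dot\mu_2|)\|\epsilon\|_{H^1}^2 \lesssim t^{-2}\|\epsilon\|_{H^1}^2$.
\item The Hamiltonian transport part, treating $V$ as frozen: after integration by parts against $\Phi_1$, one obtains a signed virial-type term $\rho\!\int\!((\p_y\epsilon)^2+\epsilon^2)\phi'\,dy\geq 0$ (which is kept as a positive surplus, not needed for the lower bound) plus remainders weighted by $\phi',\phi'',\phi'''$ and therefore supported near $y=0$, where $|V|^{p-1}=O(e^{-(p-1)z/2})$; these remainders are bounded by $e^{-z/4}\|\epsilon\|_{H^1}^2$, and cubic/quartic pieces of the nonlinear expansion contribute $O(\|\epsilon\|_{H^1}^3)\lesssim t^{-9/8}\|\epsilon\|_{H^1}^2$.
\item The motion of the solitons via $\vec m_j \cdot \vec M_j$: writing $\vec M_j$ out, the orthogonality conditions \eqref{ortho} kill the leading scalar products, leaving $O((|\vec m_1|+|\vec m_2|)\|\epsilon\|_{H^1}^2)\lesssim t^{-9/8}\|\epsilon\|_{H^1}^2$ by \eqref{eq1}--\eqref{eq2}.
\item The error $E$ contributes $\|E\|_{H^1}\|\epsilon\|_{H^1}\lesssim t^{-9/4}\|\epsilon\|_{H^1}$ by \eqref{eqE}.
\end{enumerate}
Summing (i)--(iv) yields \eqref{conser:W} with a constant $C_0$ independent of $t^{in}$.

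\emph{Main obstacle.} The delicate step is (ii): one must verify that all terms produced by the non-commutation of $\p_y(\p_y^2-1+p|V|^{p-1})$ with multiplication by $\Phi_1$ either feed the positive virial $\rho\int((\p_y\epsilon)^2+\epsilon^2)\phi'$, or are localized where $V$ is exponentially small, or carry an explicit factor $|\mu_k|$ or $e^{-z}$. The explicit derivative bounds $|\phi''|\leq 8\rho|\phi'|$, $|\phi'''|\leq(8\rho)^2|\phi'|$ combined with $8\rho=\tfrac14$ are precisely what makes the algebra close: the error integrals are majorized by $\tfrac18 \int((\p_y\epsilon)^2+\epsilon^2)\phi'$ and absorbed into the positive virial surplus. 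Once this monotonicity structure is identified, bookkeeping the remaining quadratic contributions against the modulation estimates of Lemma~\ref{lem:point} is routine.
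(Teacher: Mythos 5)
Your coercivity argument matches the paper's (localization of \eqref{coer:eq} around each soliton; the paper delegates to \cite{MMT2} what you sketch explicitly), but your treatment of $\frac{d}{dt}\WW$ has a genuine gap: you have no group corresponding to the paper's $W_3 = -\int \p_t V\,\big(|V+\epsilon|^{p-1}(V+\epsilon) - |V|^{p-1}V - p|V|^{p-1}\epsilon\big)\Phi_1$, and this term cannot be dropped. You say in (ii) that you treat ``$V$ as frozen,'' which excludes it, but $\p_t V$ carries $\dot{z}_j \sim \mu_j \sim t^{-1}$ and multiplies a quadratic-in-$\epsilon$ integrand that is \emph{not} localized near $y = 0$ and is \emph{not} weighted by $\phi'$: the leading piece of $W_3$ is $\frac{p(p-1)}{2}\sum_j \dot{z}_j \int \p_y\tilde R_j\, |V|^{p-3}V\,\epsilon^2 = O(t^{-1}\|\epsilon\|_{H^1}^2)$, which is an order of magnitude too large to absorb into $t^{-9/8}\|\epsilon\|_{H^1}^2$.

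The way the paper handles this is the crucial missing ingredient: the $\epsilon^2\Phi_2$ term in $\WW$ is not a lower-order correction but a designed counterterm. Differentiating it produces $\int\p_t\epsilon\,\epsilon\,\Phi_2$, whose leading quadratic part is $-\frac{p(p-1)}{2}\sum_j\mu_j\int\p_y\tilde R_j\,|V|^{p-3}V\,\epsilon^2$, again of size $t^{-1}\|\epsilon\|_{H^1}^2$. Only the \emph{difference}, controlled by $|\dot{z}_j - \mu_j|\lesssim t^{-9/8}$ from \eqref{eq2}, is of acceptable size. Your (ii) groups all ``remainders'' as either weighted by $\phi',\phi'',\phi'''$ (hence localized near $y=0$ where $V$ is exponentially small) or cubic/quartic in $\epsilon$; this accounting misses the $\mu_j$-weighted but non-localized quadratic contributions. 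A similar, though smaller, cancellation is used by the paper between $W_1$ and $W_2$ for the $(\dot z_j - \mu_j - a_j e^{-z})\int \p_y\tilde R_j\,\epsilon\,\Phi_2$ terms, via the identity $L_j(\p_y\tilde R_j)=\mu_j\p_y\tilde R_j$ and the pointwise bound $\|(\mu_j\Phi_1 - \Phi_2)e^{-\frac12|y-z_j|}\|_{L^\infty}\lesssim t^{-9/8}$; your item (iii) asserts these are killed by orthogonality and of size $|\vec m_j|\|\epsilon\|_{H^1}^2$, but the contribution is linear in $\epsilon$, not quadratic, and the orthogonality conditions \eqref{ortho} are unweighted while the integrand carries $\Phi_1$ or $\Phi_2$; the paper instead exhibits an exact cancellation. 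In short, the monotonicity does not close by size estimates alone; you must exhibit the two $t^{-1}\|\epsilon\|_{H^1}^2$-order cancellations ($W_2$ against $W_3$, and the $\Phi_2$-weighted piece of $W_1$ against that of $W_2$) for the bound \eqref{conser:W} to hold.
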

\begin{proof}[Proof of Proposition \ref{prop:energy}]
\item[(a)] The proof of the coercivity property \eqref{coer:W} is a standard consequence of \eqref{coer:eq} and the orthogonality properties \eqref{ortho} by an elementary localization argument. We refer to the proof of Lemma 4 in \cite{MMT2}. We observe that locally around each soliton $\tilde{R}_j$, the functional behaves essentially as
\[\int (\p_y \epsilon)^2 + (1 + \mu_j) \epsilon^2 - p\tilde{R}_j^{p-1}\epsilon^2,\]
which is a rescaled version of $\la L\epsilon, \epsilon \ra$.
\item[(b)] Now we compute $\frac{\p}{\p t}\WW (t)$
\begin{align*}
&\frac{1}{2} \frac{\p}{\p t} \WW (t) = \int \p_t \epsilon \left(- \p_y^2 \epsilon + \epsilon - \left(  |V + \epsilon|^{p-1} (V + \epsilon) - |V|^{p-1}V\right) \right)\Phi_1 \\
&- \int \p_t \epsilon\p_y\epsilon\p_y\Phi_1 + \int \p_t \epsilon  \,\epsilon \Phi_2 - \int \p_t V \left(  |V + \epsilon|^{p-1}(V + \epsilon) - |V|^{p-1}V - p |V|^{p-1}  \epsilon\right)\Phi_1\\
&+\frac{1}{2}\int \left((\p_y\epsilon)^2 + \epsilon^2 - \frac{2}{p+1} \left(|\epsilon+V|^{p+1} - |V|^{p+1} - (p+1)|V|^{p-1} V \epsilon\right) \right)\p_t\Phi_1 +\frac{1}{2}\int \epsilon^2 \p_t \Phi_2 .
\end{align*}

First, we consider
\[ W_1 (t) =  \int \p_t \epsilon \left(- \p_y^2 \epsilon + \epsilon - \left(  |V + \epsilon|^{p-1}(V + \epsilon) - |V|^{p-1}V\right) \right)\Phi_1.\]
Using the equation \eqref{eq:eps} of $\epsilon$ 
\[\p_t \epsilon = - \p_y \left(\p^2_y \epsilon - \epsilon + (|V + \epsilon|^{p-1}(V + \epsilon) - |V|^{p-1}V \right) -\sum_{j=1}^2  \vec{m}_j \cdot \vec{M}_j  -E,\]
we get
\begin{align*}
&W_1 =- \frac{1}{2}\int \left(- \p_y^2 \epsilon + \epsilon - \left(  |V + \epsilon|^{p-1}(V + \epsilon) - |V|^{p-1}V\right) \right)^2\p_y\Phi_1\\
&+\int E  \left( \p_y^2 \epsilon - \epsilon +   |V + \epsilon|^{p-1}(V + \epsilon) - |V|^{p-1}V\right)\Phi_1 + \sum_{j=1}^2 \int \vec{m}_j \cdot \vec{M}_j (\p_y^2 \epsilon - \epsilon + p \tilde{R}_j^{p-1} \epsilon)\Phi_1 \\
 &+ \sum_{j=1}^2 \int \vec{m}_j \cdot \vec{M}_j  \left(|V + \epsilon|^{p-1}(V + \epsilon) - |V|^{p-1}V - p |V|^{p-1}  \epsilon \right)\Phi_1+ \sum_{j=1}^2 \int \vec{m}_j \cdot \vec{M}_j (p|V|^{p-1}\epsilon - p\tilde{R}_j^{p-1}\epsilon)\Phi_1 .
\end{align*}
On the one hand, we have, by \eqref{eqE}
\[ \left|\int E  \left( \p_y^2 \epsilon - \epsilon +   |V + \epsilon|^{p-1}(V + \epsilon) - |V|^{p-1}V\right)\Phi_1 \right| \lesssim \|\epsilon \|_{H^1} \|E\|_{H^1} \lesssim t^{- \frac{9}{4}}\|\epsilon \|_{H^1}\]
and by \eqref{eq1}, \eqref{eq2}
\[\left|\int \vec{m}_j \cdot \vec{M}_j  \left(|V + \epsilon|^{p-1}(V + \epsilon) - |V|^{p-1}V - p |V|^{p-1}  \epsilon \right)\Phi_1 \right| \lesssim t^{-\frac{9}{8}}\|\epsilon\|_{H^1}^2 . \]
Using \eqref{esti:z} and the asymptotic bahavior of $Q$ \eqref{asyQ}, we obtain $\vec{M_j} (|V|^{p-1} - \tilde{R}_j^{p-1}) \lesssim |z|^q e^{-z} \lesssim t^{-\frac{3}{2}}$ so by pointwise control modulation equations \eqref{eq:mu}, \eqref{eq:z}
\[\left|\int \vec{m}_j \cdot \vec{M}_j (p|V|^{p-1}\epsilon - p\tilde{R}_j^{p-1}\epsilon)\Phi_1 \right| \lesssim t^{-\frac{3}{2}}\|\epsilon\|^2 .\]
On the other hand, by \eqref{eq1},
\[\left|\int (\dot{\mu}_1 +\alpha e^{-z}) \Lambda \tilde{R}_1 (\p_y^2 \epsilon - \epsilon + p \tilde{R}_j^{p-1} \epsilon)\Phi_1\right|\lesssim t^{-\frac{9}{4}} \|\epsilon\|_{H^1}.\]
Denote $L_j = - f'' + f - p \tilde{R}_j^{p-1} f$ then
\begin{align*}
L_j(\p_y \tilde{R}_j) =&  \mu_j \p_y \tilde{R}_j.
\end{align*}
Thus by \eqref{ortho}, \eqref{eq2} and remark that by the decay property of $Q$, $\phi', \phi''$
\begin{equation}\label{es:phi}
\begin{gathered}
\|e^{-\frac{1}{2}|y-z_j|} \p_y \Phi_1\|_{L^\infty} \lesssim (|\mu_1 |+ |\mu_2|) e^{- 2 \rho z} \lesssim t^{-\frac{9}{8}}\\
\|(\mu_j \Phi_1 -  \Phi_2)e^{-\frac{1}{2}|y-z_j|}\|_{L^\infty}\lesssim (|\mu_1 |+ |\mu_2|) e^{- 2 \rho z} \lesssim t^{-\frac{9}{8}}
\end{gathered}
\end{equation}
for $j=1,2$, here note that $\rho = \frac{1}{32}$, so we obtain
\begin{align*}
&\int (\dot{z}_1 - \mu_1 - a_1 e^{-z}) \p_y \tilde{R}_1 (\p_y^2 \epsilon - \epsilon + p \tilde{R}_j^{p-1} \epsilon) \Phi_1\\=& \int (\dot{z}_1 - \mu_1 - a_1 e^{-z}) L_1(\p_y \tilde{R}_1) \epsilon \Phi_1 + O (\|\epsilon\|_{H^1}^2 (|\mu_1 |+ |\mu_2|) e^{- 2 \rho z})\\
=&  (\dot{z}_1 - \mu_1 - a_1 e^{-z})\int \mu_1 \p_y \tilde{R}_1 \epsilon \Phi_1+ O (t^{-\frac{9}{8}}\|\epsilon\|_{H^1}^2)\\
= & (\dot{z}_1 - \mu_1 - a_1 e^{-z})\int  \p_y\tilde{R}_1 \epsilon \Phi_2+ O (t^{-\frac{9}{8}}\|\epsilon\|_{H^1}^2).
\end{align*} 
The same estimates hold for $\tilde{R}_2$ hence the first term $W_1$ of $\frac{\p}{\p t}\WW$ verifies
\begin{equation*}
\begin{aligned}
W_1(t) =- \frac{1}{2}\int \left(- \p_y^2 \epsilon + \epsilon - \left(  |V + \epsilon|^{p-1}(V + \epsilon) - |V|^{p-1}V\right) \right)^2\p_y\Phi_1\\
 + \sum_{j=1}^2(\dot{z}_j - \mu_j - a_j e^{-z})\int  \p_y\tilde{R}_j \epsilon \Phi_2 + O(t^{- \frac{9}{8}}\|\epsilon\|_{H^1}^2) + O(t^{-\frac{9}{4}}\|\epsilon\|_{H^1}).
\end{aligned}
\end{equation*} 
For the first term, using integration by parts, $\|e^{-\frac{1}{2}|y-z_j|}\p_y \Phi_1\|_{L^\infty} \lesssim (|\mu_1 |+ |\mu_2|) e^{- 2 \rho z} \lesssim t^{-\frac{9}{8}}$, $|\phi'''| \leq (8 \rho)^2 |\phi'| \leq \frac{1}{16}|\phi'|$ and the fact that since $ \mu_1(t) \geq \mu_2(t)$ so $\frac{1}{(1+\mu_1(t))^2} \leq \frac{1}{(1+\mu_2(t))^2}$, $\p_y \Phi_1 \leq 0$, we get
\begin{equation}\label{3}
\begin{aligned}
W_1(t) =&- \frac{1}{2}\int (- \p_y^2 \epsilon)^2 \p_y\Phi_1 - \frac{1}{2}\int \epsilon^2 \p_y \Phi_1 + \int \p_y^2 \epsilon \,\epsilon \p_y \Phi_1 + O(t^{- \frac{9}{8}}\|\epsilon\|_{H^1}^2)\\
 &+ \sum_{j=1}^2(\dot{z}_j - \mu_j - a_j e^{-z})\int  \p_y\tilde{R}_j \epsilon \Phi_2 + O(t^{- \frac{9}{8}}\|\epsilon\|_{H^1}^2) + O(t^{-\frac{9}{4}}\|\epsilon\|_{H^1})\\
 \geq & -\frac{3}{4} \int (\p_y\epsilon)^2 \p_y\Phi_1 - \frac{3}{8} \int  \epsilon^2 \p_y \Phi_1 + \sum_{j=1}^2(\dot{z}_j - \mu_j - a_j e^{-z})\int  \p_y\tilde{R}_j \epsilon \Phi_2\\
  &+ O(t^{- \frac{9}{8}}\|\epsilon\|_{H^1}^2) + O(t^{-\frac{9}{4}}\|\epsilon\|_{H^1}).
\end{aligned}
\end{equation} 

Second, consider
\[W_2= - \int \p_t \epsilon\, \p_y\epsilon\p_y\Phi_1 + \int \p_t \epsilon  \,\epsilon \Phi_2.\]
From the equation of $\p_t \epsilon$ \eqref{eq:eps}
\begin{equation}\label{4}
\begin{aligned}
\int \p_t \epsilon  \,\epsilon \Phi_2 =& \int \left( \p_y^2 \epsilon - \epsilon +   |V + \epsilon|^{p-1}(V + \epsilon) - |V|^{p-1}V\right) \p_y (\epsilon \Phi_2) - \int E \,\epsilon \Phi_2\\
&- \sum_{j=1}^2 \int \vec{m}_j \cdot \vec{M}_j \,\epsilon \Phi_2.
\end{aligned}
\end{equation}
We have
\[\left| \int E (\epsilon \Phi_2) \right| \lesssim \|\epsilon \|_{H^1} \|E\|_{H^1} \lesssim t^{- \frac{9}{4}}\|\epsilon \|_{H^1}\]
and from \eqref{eq1} $|\dot{\mu}_1 +\alpha e^{-z}| \lesssim t^{- \frac{9}{4}}$ so
\[\left|\int (\dot{\mu}_1 +\alpha e^{-z}) \Lambda \tilde{R}_1 (\epsilon \Phi_2) \right| \lesssim t^{-\frac{9}{4}} \|\epsilon \|_{H^1} |\mu_1| \lesssim t^{-3} \|\epsilon \|_{H^1}.\]
And for the first term, using integration by parts and the fact $\p_y \Phi_2 \geq 0$, we get
\begin{align*}
&\int \left( \p_y^2 \epsilon - \epsilon +   |V + \epsilon|^{p-1}(V + \epsilon) - |V|^{p-1}V\right) \p_y (\epsilon \Phi_2)\\ = &- \frac{3}{2} \int (\p_y \epsilon)^2 \p_y \Phi_2 -\frac{1}{2} \int \epsilon^2 \p_y \Phi_2 + \frac{1}{2} \int \epsilon^2 \p_y^3 \Phi_2 + \int \left(|V + \epsilon|^{p-1}(V + \epsilon) - |V|^{p-1}V\right) \p_y (\epsilon \Phi_2)\\
\geq & - \frac{3}{2} \int (\p_y \epsilon)^2 \p_y \Phi_2 -\frac{3}{4} \int \epsilon^2 \p_y \Phi_2 + \int \left(|V + \epsilon|^{p-1}(V + \epsilon) - |V|^{p-1}V\right) \p_y (\epsilon \Phi_2).
\end{align*}
As $\left( |V + \epsilon|^{p-1}(V + \epsilon) - |V|^{p-1}V\right) - p |V|^{p} \epsilon = O(\epsilon^2)$, let consider
\begin{align*}
&\int \left(|V + \epsilon|^{p-1}(V + \epsilon) - |V|^{p-1}V\right) \p_y (\epsilon \Phi_2) = \int p |V|^{p-1} \epsilon \p_y(\epsilon \Phi_2) + \sum_{j=1}^2|\mu_j|\|\epsilon\|_{H^1}^3 \\
= & - \int \frac{p(p-1)}{2} |V|^{p-3}V (\p_y V \Phi_2)\epsilon^2 - \left(\frac{p}{2} -1\right) \int p |V|^{p-1} \epsilon^2 \p_y \Phi_2 + O(s^{-2} \|\epsilon\|_{H^1}^2).
\end{align*}
However, by the decay property of $V$ and $\Phi$, we have
\[\| V \p_y \Phi_2\|_{L^\infty} \lesssim(|\mu_1 |+ |\mu_2|) e^{- 2 \rho z},\]
\[\left\|\Phi_2 \p_y V - \sum_{j=1}^2 \mu_j \p_y \tilde{R}_j\right\|_{L^\infty} \lesssim (|\mu_1 |+ |\mu_2|) e^{- 2 \rho z} + e^{-z}, \]
so gathering these computations
\begin{equation}\label{5}
\begin{aligned}
\int \p_t \epsilon \,\epsilon \Phi_2 \geq& - \frac{p(p-1)}{2} \sum_{j=1}^2 \mu_j \int \p_y \tilde{R}_j|V|^{p-3}V \epsilon^2 -\sum_{j=1}^2(\dot{z}_j - \mu_j - a_j e^{-z})\int  \p_y\tilde{R}_j \epsilon \Phi_2\\
&- \frac{3}{2} \int (\p_y \epsilon)^2 \p_y \Phi_2 -\frac{3}{4} \int \epsilon^2 \p_y \Phi_2+ O (s^{-2}\|\epsilon\|_{H^1}^2) + O (\|\epsilon\|_{H^1}^2 (|\mu_1 |+ |\mu_2|) e^{- 2 \rho z})\\
= & - \frac{p(p-1)}{2} \sum_{j=1}^2 \mu_j \int \p_y \tilde{R}_j|V|^{p-3}V \epsilon^2 -\sum_{j=1}^2(\dot{z}_j - \mu_j - a_j e^{-z})\int  \p_y\tilde{R}_j \epsilon \Phi_2\\
&- \frac{3}{2} \int (\p_y \epsilon)^2 \p_y \Phi_2 -\frac{3}{4} \int \epsilon^2 \p_y \Phi_2 + O (t^{-\frac{9}{8}}\|\epsilon\|_{H^1}^2).
\end{aligned}
\end{equation}
Moreover, by \eqref{eq:eps}, \eqref{es:phi}, integrating by parts and arguing as in \eqref{4}, we have
\begin{align*}
- \int \p_t \epsilon\,\p_y\epsilon\p_y\Phi_1 =& - \int \left(\p^2_y \epsilon - \epsilon + (|V + \epsilon|^{p-1}(V + \epsilon) - |V|^{p-1}V \right)\p_y\left(\p_y\epsilon\p_y\Phi_1\right)\\
 &+ \sum_{j=1}^2 \int \vec{m}_j \cdot \vec{M}_j  \,\p_y\epsilon\p_y\Phi_1  + \int E \,\p_y\epsilon\p_y\Phi_1\\
 \geq & -\int (\p_y^2 \epsilon)^2 \p_y\Phi_1 - \frac{7}{8} (\p_y \epsilon)^2 \p_y \Phi_1 + O (t^{-\frac{9}{8}}\|\epsilon\|_{H^1}^2) \geq O (t^{-\frac{9}{8}}\|\epsilon\|_{H^1}^2)
\end{align*}
as $\p_y \Phi_1 \leq 0$ and $\|E\|_{H^1} \lesssim t^{-\frac{9}{4}}, |m_j| \lesssim t^{- \frac{9}{8}}$, $\|e^{-\frac{1}{2}|y-z_j|} \p_y \Phi_1\|_{L^\infty} \lesssim t^{- \frac{9}{8}}$.
Therefore, we deduce that
\begin{multline}\label{2}
W_2 \geq - \frac{p(p-1)}{2} \sum_{j=1}^2 \mu_j \int \p_y \tilde{R}_j|V|^{p-3}V \epsilon^2 -\sum_{j=1}^2(\dot{z}_j - \mu_j - a_j e^{-z})\int  \p_y\tilde{R}_j \epsilon \Phi_2\\
- \frac{3}{2} \int (\p_y \epsilon)^2 \p_y \Phi_2 -\frac{3}{4} \int \epsilon^2 \p_y \Phi_2 + O (t^{-\frac{9}{8}}\|\epsilon\|_{H^1}^2).
\end{multline}

Next, let
\[W_3 =  - \int \p_t V \left(  |V + \epsilon|^{p-1}(V + \epsilon) - |V|^{p-1}V - p |V|^{p-1} \epsilon\right)\Phi_1.\]
Remark that from the definition of $V$ and $\phi$
\begin{multline}
\left\|\p_t V \Phi_1 - \left(\dot{\mu}_1 \Lambda \tilde{R}_1 - \dot{z}_1\p_y \tilde{R}_1 \right) - \sigma\left(\dot{\mu}_2 \Lambda \tilde{R}_2 - \dot{z}_2\p_y \tilde{R}_2 \right)\right\|_{L^\infty} \\
\lesssim |z|^q e^{-z} + (|\mu_1|+|\mu_2|)e^{- 2 \rho z} \lesssim t^{-\frac{9}{8}}
\end{multline}
as $|\dot{z}_j| \sim |\mu_j| \lesssim t^{-1}, |\dot{\mu}_j| \sim e^{-z} \lesssim t^{-2}$. And from the expansion of $|V+\epsilon|^{p-1}(V+\epsilon)$
\[\left\|\left(  |V + \epsilon|^{p-1}(V + \epsilon) - |V|^{p-1}V - p |V|^{p-1} \epsilon\right) - \frac{p(p-1)}{2}\epsilon^2 V|V|^{p-3}\right\|_{L^\infty} \lesssim \|\epsilon\|^3_{L^\infty}.\]
Combining with \eqref{boot1}-\eqref{boot5}, \eqref{eq1}, \eqref{eq2}, we have
\begin{equation*}
\begin{aligned}
&\left|W_3  - \frac{p (p-1)}{2} \dot{z}_1 \int \p_y \tilde{R}_1 V|V|^{p-3} \epsilon^2 - \frac{p (p-1)}{2} \dot{z}_2 \int \sigma \p_y \tilde{R}_2 V|V|^{p-3} \epsilon^2 \right|\\
\lesssim & \,\sum_{j=1}^2 |\dot{z_j}| \|\epsilon \|_{H^1}^3 + \sum_{j=1}^2 |\dot{\mu}_j |\|\epsilon \|_{H^1}^2 + t^{-\frac{9}{8}} \|\epsilon\|_{H^1}^2
\lesssim  t^{-\frac{9}{8}} \|\epsilon \|_{H^1}^2,
\end{aligned}
\end{equation*}
in other words,
\begin{equation}\label{1}
W_3  = \frac{p (p-1)}{2} \dot{z}_1 \int \p_y \tilde{R}_1 V|V|^{p-3} \epsilon^2 + \frac{p (p-1)}{2} \dot{z}_2 \int \sigma \p_y \tilde{R}_2 V|V|^{p-3} \epsilon^2 + O (t^{-\frac{9}{8}}\|\epsilon\|^2_{H^1}).
\end{equation}

Finally,
\[ W_4 = \frac{1}{2}\int \left((\p_y\epsilon)^2 + \epsilon^2 - \frac{2}{p+1} \left(|\epsilon+V|^{p+1} - |V|^{p+1} - (p+1)|V|^{p-1} V \epsilon\right) \right)\p_t\Phi_1 +\frac{1}{2}\int \epsilon^2 \p_t \Phi_2 .\]
Since 
\[\p_t \Phi_1 = \frac{-2(1 + \mu_1(t))\dot{\mu}_1(t)\phi(y)}{(1 + \mu_1(t))^3} + \frac{-2(1 + \mu_2(t))\dot{\mu}_2(t)(1 -\phi(y))}{(1+ \mu_2(t))^3},\]
\[\p_t\Phi_2 (t,y) = \frac{\left[(1 + \mu_1(t))\dot{\mu}_1 (t) - 2 \mu_1(t)\dot{\mu}_1(t)\right]\phi(y)}{(1 + \mu_1(t))^3} + \frac{\left[(1 + \mu_2(t))\dot{\mu}_2 (t) - 2 \mu_2(t)\dot{\mu}_2(t)\right](1 -\phi(y))}{(1+ \mu_2(t))^3},\] 
we get that 
\begin{equation*}
\left|W_4 \right| \lesssim (|\dot{\mu_1}|+ |\dot{\mu}_2|)\|\epsilon \|_{H^1}^2 \lesssim t^{-2} \|\epsilon \|_{H^1}^2
\end{equation*}
as $\left|\dot{\mu}_1 + \alpha e^{-z} \right| + \left|\dot{\mu}_2 - \alpha e^{-z} \right| \lesssim t^{- \frac{9}{4}}$ and $|e^{-z}| \lesssim t^{-2}$ so
\begin{equation}\label{6}
W_4 = O (t^{-2}\|\epsilon\|_{H^1}^2).
\end{equation}

To conclude, recall that by \eqref{eq2}, $\left|\mu_j(t) - \dot{z}_j(t) \right| \lesssim t^{- \frac{9}{8}}$ and remark that by explicit computations
\[\left|\p_y \Phi_1 + 2 \p_y \Phi_2\right| = \left|\frac{\mu_1^2(t) \phi'(y)}{(1+\mu_1(t))^2} + \frac{-\mu_2^2(t) \phi'(y)}{(1+\mu_2(t))^2}\right| \lesssim |\mu_1(t)|^2 + |\mu_2(t)|^2 \lesssim t^{-2}, \]
we can deduce from \eqref{3}, \eqref{2}, \eqref{1}, \eqref{6} that
\[\frac{d}{ds}\WW(t) = W_1(t) + W_2(t) + W_3(t) +W_4(t)  \geq -\, C_0 \left( t^{- \frac{9}{8}}\|\epsilon\|_{H^1}^2 + t^{-\frac{9}{4}}\|\epsilon\|_{H^1} \right)\]
for some $C_0>0$ as required.
\end{proof}
\subsubsection{End of the proof of Proposition \ref{prop:energy}} We close the bootstrap estimates \eqref{boot1}-\eqref{boot5}.
\\
\textbf{step 1} Closing the estimates in $\epsilon$ \eqref{boot3}. By \eqref{conser:W} in Proposition \ref{prop:energy}, we have
\[\frac{\p}{\p t}\WW(t) \geq - \,C_0 \left( t^{- \frac{9}{8}}\|\epsilon\|_{H^1}^2 + t^{-\frac{9}{4}}\|\epsilon\|_{H^1} \right) \geq - \,C t^{- \left(\frac{9}{4} + \frac{9}{8}\right)}.\]
Thus, by integration on $[t,t^{in}]$ for any $t \in [t^*, t^{in}]$, using $\epsilon (t^{in}) = 0$ in sub-critical cases (in super-critical cases $\|\epsilon (t^{in})\|_{H^1} \leq C(t^{in})^{-\frac{3}{2}}$, see Section \ref{sec:super} for details), we obtain
\[\WW (t) \lesssim t^{- \left(\frac{9}{4}+ \frac{1}{8}\right)}\]
so by \eqref{coer:W}
\[\|\epsilon\|_{H^1}^2 \lesssim t^{- \left(\frac{9}{4}+ \frac{1}{8}\right)}.\]
Therefore, for $t_0$ large enough, for all $t\in [t^*, t^{in}],$
\[\|\epsilon\|_{H^1}^2 \leq \frac{1}{2} t^{- \frac{9}{4}}\]
which strictly improves the estimates on $\|\epsilon\|_{H^1}$ in \eqref{boot3}.
\\
\smallskip
\textbf{step 2} Closing the parameters $\bar{\mu}, \bar{z}$. From the equations \eqref{eq1}, \eqref{eq2} and the estimates \eqref{boot1}, \eqref{boot4} in the bootstrap regime, we obtain
\[|\dot{\mu}_1 + \dot{\mu}_2| \lesssim t^{- \frac{9}{4}}, \qquad |\dot{z}_1 + \dot{z}_2| \lesssim t^{- \frac{9}{8}}.\]
By integrating on $[t, t^{in}]$, 
\[|\mu_1 + \mu_2| \leq t^{-\frac{5}{4}}, \qquad |z_1 + z_2| \leq t^{-\frac{1}{8}}\]
as we choose initial data
\[\mu_1 (t^{in})= - \mu_2 (t^{in}), \qquad z_1 (t^{in}) = - z_2 (t^{in}).\]
This improves the estimates \eqref{boot1}, \eqref{boot2}.
\\
\smallskip
\textbf{step 3} Closing the parameters $\mu, z$. Recall that by Lemma \ref{lem:point}
\[\left|\dot{\mu}_1(t) + \alpha e^{-z(t)} \right| + \left|\dot{\mu}_2(t) - \alpha e^{-z(t)} \right| \lesssim t^{- \frac{9}{4}},\]
\[\left|\mu_j (t) - \dot{z}_j  (t)\right| \lesssim t^{- \frac{9}{8}}.\]
Thus we deduce for $\mu(t) = \mu_1(t) - \mu_2(t)$ and $z(t) = z_1(t) - z_2(t) $
\[\left|\dot{\mu} + 2 \alpha e^{-z}\right| \lesssim t^{-\frac{9}{4}},\]
\[|\mu - \dot{z}| \lesssim t^{- \frac{9}{8}}.\]
We get
\[\left|\dot{\mu} \mu + 2 \alpha \dot{z} e^{-z}\right| \lesssim t^{- \left(3+\frac{1}{8}\right)}.\]
since $|\mu| \lesssim t^{-1} , |\dot{ \mu}| \lesssim t^{-2}$. Therefore, by explicit choice of initial data
\[\mu_1(t^{in}) = - \mu_2(t^{in}) =\sqrt{\alpha} e^{- z^{in}},\quad z_1(t^{in}) = - z_2(t^{in}) = z^{in}\]
then $\mu(t^{in}) = 2 \sqrt{\alpha} e^{-\frac{z(t^{in})}{2}}$, we integrate on $[t, t^{in}]$: for any $t \in [t^* ,t^{in}]$
\[\left|\mu^2 - 4\alpha e^{-z} \right| \lesssim t^{- \left( 2 + \frac{1}{8} \right)}.\]
Combining with \eqref{boot4}, $|\mu - 2 s^{-1}| \lesssim t^{- \left( 1 + \frac{1}{16} \right)}$ which closes \eqref{boot5}. Now, we need to finish the bootstrap argument for $z$ \eqref{boot4}. Let consider
\[\left|\mu - 2\sqrt{\alpha} e^{-\frac{1}{2} z} \right| +|\mu - \dot{z}| \lesssim t^{- \frac{9}{8}}\]
then we get
\[\left|\dot{z} - 2\sqrt{\alpha} e^{- \frac{1}{2}z}\right| \lesssim t^{-\frac{9}{8}}.\]
Note that $\frac{d}{dt}\left(e^{\frac{1}{2}z}\right) =\frac{1}{2} \dot{z}e^{\frac{1}{2}z}$ thus
\begin{equation}\label{dd}
\left|\frac{d}{dt}\left(e^{\frac{1}{2}z}\right) -  \sqrt{\alpha} \right| \lesssim t^{-\frac{1}{8}}
\end{equation}
here we use $|e^{-z}|\lesssim t^{-2}$. 

Next, we need to adjust the initial choice of $z^{in}\gg 1$ through a topological argument (see \cite{CMM} for a similar argument).
We define $\zeta $ and $\xi$ the following two functions on $[T^*,T^{in}]$
\be\label{d:zx}
\zeta(t) = \frac{e^{\frac 1 2 z}}{ \sqrt{\alpha}}  ,\quad
\xi(t) = (\zeta(t)-t)^2 t^{-\frac{15}{8}}.
\ee
Then,~\eqref{dd} writes
\be\label{ddd}
| \dot \zeta(t) - 1|\lesssim t^{-\frac{1}{8}}.
\ee
According to~\eqref{boot4}, our objective is to prove that there exists a suitable choice of 
\[\zeta(t^{in})=\zeta^{in}\in [t^{in}-(t^{in})^{\frac{15}{16}},t^{in}+(t^{in})^{\frac{15}{16}}],\]
so that $t^*=t_0$.
Assume for the sake of contradiction that for all $\zeta^{\sharp}\in [-1,1]$, the choice
\[
\zeta^{in} = t^{in} +  (t^{in})^{-\frac{15}{16}}\zeta^{\sharp}
\]
 leads to $t^*=t^*(\zeta^{\sharp})\in (t_0,t^{in}]$.
Since all estimates in the bootstrap regime except the one on $z$ have been strictly improved on $[s^*,s^{in}]$, it follows from  $t^*(\zeta^{\sharp})\in (t_0,t^{in}]$ and continuity that
\[
|\zeta(t^*(\zeta^{\sharp})) - t^*| = (t^*)^{\frac{15}{16}}\quad \hbox{i.e.} \quad
\zeta(t^*(\zeta^{\sharp})) = t^* \pm (t^*)^{\frac{15}{16}}.
\]
We need a transversality condition to reach a contradiction.
We compute:
\begin{equation}\label{xi}
\dot \xi(t) = 2 (\zeta(t)-t)(\dot \zeta(t) - 1) t^{-\frac{15}{8}}
- \frac{15}{8}(\zeta(t)-t)^{2} t^{-\frac{23}{8}}.
\end{equation}
At $t=t^{*}$, this gives
\[
\left| \dot \xi(t^{*}) + \frac{15}{8} (t^*)^{-1} \right|\lesssim (t^*)^{-\frac{17}{16}}.
\]
Thus, for $t_0$ large enough,
\be\label{ng}
\dot \xi(t^{*}) < - (t^*)^{-1}.
\ee
A consequence of the transversality property~\eqref{ng} is the continuity of the function $\zeta^{\sharp}\in[-1,1]\mapsto t^*(\zeta^{\sharp})$. Indeed, let $\epsilon > 0$ then there exists $\delta >0 $ such that $\xi (t^*(\zeta^\sharp) - \epsilon) > 1 + \delta$ and $\xi (t^*(\zeta^\sharp) + \epsilon) < 1 - \delta$. Moreover, by definition of $t^*(\zeta^\sharp)$ (choosing $\delta$ small enough) for all $t \in [t^*(\zeta^\sharp) + \epsilon, t^{in}]$ we have $ \xi (t) < 1 - \delta$. But from the continuity of the flow, there exists $\iota >0$ such that for all $|\tilde{\zeta}^\sharp - \zeta^\sharp| < \iota$
$$\forall t\in [t^{*}(\zeta^\sharp) - \epsilon, t^{in}], \quad |\tilde{\xi} (t) - \xi (t)| \leq \delta / 2$$
so we obtain that $t^*(\zeta^\sharp) - \epsilon \leq t^* (\tilde{\zeta}^\sharp) \leq t^*(\xi^\sharp)+\epsilon$ and the continuity of $t^* (\zeta^\sharp)$ as expected. Thus we deduce the continuity of the function $\Phi$ defined by 
\[
\Phi \, : \, \zeta^{\sharp}\in [-1,1] \mapsto 
(\zeta(t^{*})-t^{*})(t^{*})^{\frac{15}{16}}\in \{-1,1\}.
\]
Moreover, for $\zeta^{\sharp}=-1$ and $\zeta^{\sharp}=1$, in these two cases $\xi(t^{in})=1$, from ~\eqref{xi} we have that $\dot{\xi}(t^{in}) < 0$ thus $t^{*} =t^{in}$. Therefore, $\Phi(-1)=-1$ and $\Phi(1)=1$, but this is a contradiction with the continuity.

\smallskip

In conclusion, there exists at least a choice of 
 \[\zeta(t^{in})=\zeta^{in}\in \left(t^{in}-(t^{in})^{\frac{15}{16}},t^{in}+(t^{in})^{\frac{15}{16}}\right)\]
such that $t^*=t_0$. This concludes our bootstrap argument for \eqref{boot4}.
\subsection{Proof of the uniform estimates in super-critical cases}\label{sec:super} In this section, we present some modifications to prove the result in super-critical cases. Some extra parameters are needed in order to control the instability created by $Z^\pm$. Denote
\[\tilde{Z}^\pm_k (t,y) =Z^\pm_{1 + \mu_k (t)} (y - z_k(t)).\] 
Thus, instead of considering the final data
$u(t^{in}) = V(x - t^{in}; (\mu^{in}, - \mu^{in}, z_{in}, -z^{in}))$ as in sub-critical cases, we look at solution $u(t)$ of \eqref{gkdv} with final data
\begin{align*}
u (t^{in}, x) =& \, V(x - t^{in}; (\mu^{in}, - \mu^{in}, z_{in}, -z^{in})) + \epsilon^{in} (x - t^{in})
\end{align*}
where
\begin{multline}
\epsilon^{in} (y) = b_1^+ \tilde{Z}^+_1(t^{in}, y) + b_1^- \tilde{Z}^-_1(t^{in}, y) + b_2^+ \tilde{Z}^+_2(t^{in}, y)+ b_2^- \tilde{Z}^-_2(t^{in}, y)\\
+ b_3 \tilde{R}_1(t^{in}, y) + b_4\tilde{R}_2(t^{in}, y) + b_5 \p_y \tilde{R}_1(t^{in}, y) +b_6 \p_y \tilde{R}_2(t^{in}, y)
\end{multline}
and $b= (b_1^+, b_1^-, b_2^+, b_2^-,b_3, b_4, b_5,b_6)$ belongs to some small neighborhood of $0$ in $\R^8$.

We consider the decomposition of $w(t,y) = u(t, y+t)$ by Lemma \ref{lem:mod}
\[w(t,y) = V(y; \Gamma (t)) + \epsilon (t, y)\]
that satisfies the orthogonality conditions \eqref{ortho}. Define
\begin{equation}
a^\pm_k (t) = \int \epsilon (t,y) \tilde{Z}^\pm_k (t,y) dy, \qquad a^\pm(t) = (a^\pm_1(t), a^\pm_2(t)) .
\end{equation}
The following lemma allows us to establish a one-to-one mapping between the choice of $b= (b_1^+, b_1^-, b_2^+, b_2^-,b_3, b_4, b_5,b_6)$ and the initial constraints $a^+ (t^{in}) =0, a^-(t^{in})=a^{in}$.
\begin{lemma} [Modulated data in direction $Y^\pm$]\label{lem:mode} There exists $C > 0$ such that for all $t^{in} \geq t_0$ and for all $a^{in} =(a^{in}_1, a^{in}_2) \in B_{\R^2}(0,(t^{in})^{-\frac{3}{2}})$, there is a unique $b$ so that $||b|| \leq C \|a^{in}\|$ ($C$ independent of $t^{in}$) and the initial data satisfies
\begin{equation}
\begin{gathered}
\left(\int \epsilon^{in}(y) \tilde{Z}^-_1(t^{in}, y)dy , \int \epsilon^{in}(y) \tilde{Z}^-_2(t^{in}, y)dy \right)= (a^{in}_1,a^{in}_2),\\
\int \epsilon^{in} \tilde{Z}^+_1(t^{in})  = \int \epsilon^{in} \tilde{Z}^+_2(t^{in})  = \int \epsilon^{in} \tilde{R}_1 (t^{in}) = \int \epsilon^{in}\p_y \tilde{R}_1 (t^{in}) \\= \int \epsilon^{in} \tilde{R}_2 (t^{in}) = \int \epsilon^{in}\p_y \tilde{R}_2 (t^{in}) =0.
\end{gathered}
\end{equation}
\end{lemma}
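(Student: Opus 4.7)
The plan is to formulate the lemma as a linear-algebra problem (no genuine Implicit Function Theorem is needed because the map $b\mapsto \epsilon^{in}$ is already linear). Writing $\epsilon^{in}$ as in the statement, the eight prescribed conditions assemble into a single linear system
\[
G(t^{in})\, b \;=\; (a^{in}_1,\, a^{in}_2,\, 0,\, 0,\, 0,\, 0,\, 0,\, 0)^\top,
\]
where $G(t^{in})$ is the $8\times 8$ Gram matrix of the eight generators
\[
\{\tilde Z^+_1,\ \tilde Z^-_1,\ \tilde Z^+_2,\ \tilde Z^-_2,\ \tilde R_1,\ \tilde R_2,\ \p_y\tilde R_1,\ \p_y\tilde R_2\}(t^{in},\cdot)
\]
in $L^2(\R)$. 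The lemma therefore reduces to showing that, for $t^{in}$ large and $|\mu^{in}|$ small, $G(t^{in})$ is invertible with operator norm of its inverse bounded independently of $t^{in}$; the estimate $\|b\|\le C\|a^{in}\|$ then follows at once from $b = G(t^{in})^{-1}(a^{in}_1, a^{in}_2, 0,\dots,0)^\top$.

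To obtain this uniform invertibility I would first exploit the separation of the two solitons. Since $z^{in}\gg 1$ and all of $Z^\pm, Q, Q'$ belong to $\mathcal{Y}$, every $L^2$ inner product between a function centered at $z^{in}$ and one centered at $-z^{in}$ is $O(z^{in}e^{-2z^{in}})$. Hence, up to an exponentially small perturbation and after reordering coordinates, $G(t^{in})$ is block-diagonal with two identical $4\times 4$ diagonal blocks $M_{\mu^{in}}$, each being the Gram matrix of $\{Z^+_{1+\mu^{in}},Z^-_{1+\mu^{in}}, Q_{1+\mu^{in}}, Q'_{1+\mu^{in}}\}$. By continuity in $\mu^{in}$ and elementary perturbation theory, it suffices to establish invertibility of the limiting block $M_0$ at $\mu^{in}=0$.

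For $M_0$, parity gives $\langle Q, Q'\rangle = 0$, and self-adjointness of $L$ combined with $L(\p_y Z^\pm) = \pm e_0 Z^\pm$ and $LQ'=0$ yields
\[
e_0\,\langle Z^\pm, Q'\rangle \;=\; \pm\bigl\langle L(\p_y Z^\pm), Q'\bigr\rangle \;=\; \pm\bigl\langle \p_y Z^\pm, LQ'\bigr\rangle \;=\; 0.
\]
Thus $Q'$ is $L^2$-orthogonal to the other three generators and $M_0$ decouples as $\|Q'\|_{L^2}^2$ direct-summed with the $3\times 3$ Gram matrix $H$ of $\{Z^+, Z^-, Q\}$. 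Invertibility of $H$ is equivalent to the $L^2$-linear independence of these three functions, which is immediate from their spectral interpretation: they are eigenfunctions of the operator $L\p_y$ with distinct eigenvalues $e_0,\, -e_0,\, 0$ (for $Q$ this is $L\p_y Q = LQ' = 0$), and eigenvectors with distinct eigenvalues are linearly independent. Hence $H$ is positive definite and $M_0$ invertible.

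The main obstacle is the quantitative perturbation bookkeeping: one has to verify that the exponentially small off-diagonal block of $G(t^{in})$, together with the $O(\mu^{in})$ deformation of the generators away from $\mu^{in}=0$, is dominated by the spectral gap of $M_0$, uniformly in $t^{in}$. Once this is in place, Banach's lemma on invertibility of a perturbation of an invertible operator gives $\|G(t^{in})^{-1}\| = O(1)$ and the conclusion of the lemma follows.
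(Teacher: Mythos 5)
Your proposal is correct and follows essentially the same route as the paper: reduce to invertibility of an $8\times 8$ Gram-type matrix, observe that it is block-diagonal up to $O(\mu^{in})+O(e^{-cz^{in}})$ corrections, and deduce invertibility of the limiting $4\times 4$ block from the linear independence of $Z^{+},Z^{-},Q,Q'$ (with $Z^{\pm},Q$ being eigenfunctions of $L\p_y$ with distinct eigenvalues $\pm e_0,0$ and $Q'$ orthogonal to all three). One small remark: you correctly invoke only linear independence of the eigenfunctions of the non-self-adjoint operator $L\p_y$ rather than orthogonality, which is the slightly cleaner statement (the paper's phrasing ``linearly independent and orthogonal'' is a little loose for $Z^{+},Z^{-}$); in either case linear independence is exactly what invertibility of the Gram block requires.
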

\begin{proof}[Proof of Lemma \ref{lem:mode}] Denote
\[c = \left(\int \epsilon^{in} \tilde{Z}^-_1 ,\int \epsilon^{in} \tilde{Z}^+_1,\int \epsilon^{in} \tilde{R}_1 , \int \epsilon^{in}\p_y \tilde{R}_1   ,\int \epsilon^{in}\tilde{Z}^-_2,\int \epsilon^{in} \tilde{Z}^+_2,\int \epsilon^{in} \tilde{R}_2 ,\int \epsilon^{in}\p_y \tilde{R}_2\right)\]
and consider the linear maps
\begin{alignat*}{2}
\Psi:\, \RR^8 &\to H^1(\RR) &\qquad  \Phi : \, H^1(\RR) &\to \RR^8\\
b &\mapsto \epsilon^{in}  &\qquad \epsilon^{in}&\to c\\
\end{alignat*}
and $\Omega = \Phi \circ \Psi : \RR^8 \to \RR^8$. We can check that for some functions $A(y) , B(y) \in \mathcal{Y}$
\begin{align*}
\Omega = \Phi \circ \Psi &= 
   \left(
    \begin{array}{c;{2pt/2pt}r}
    \mbox{\LARGE $N$} &\mbox{\LARGE $0$}  \\ \hdashline[2pt/2pt]
    \mbox{\LARGE $0$} & \mbox{\LARGE $N$}
    \end{array}
    \right)
   + O(|\mu^{in}|)+ O(\left|\la A(y+z^{in} \vec{e}_1) , B(y) \ra\right|) \\
   &= \left(
    \begin{array}{c;{2pt/2pt}r}
    \mbox{\LARGE $N$} &\mbox{\LARGE $0$}  \\ \hdashline[2pt/2pt]
    \mbox{\LARGE $0$} & \mbox{\LARGE $N$}
    \end{array}
    \right)+ O(|\mu^{in}|) +  O(e^{- z^{in}})
\end{align*}
where $N$ is the Gramian matrix of $Z^\pm, Q, \p_y Q$ which are linearly independent. Indeed, $Z^+, Z^-, Q$ are linearly independent and orthogonal since they are eigenfunctions of $L\p_y$ corresponding to different eigenvalues $e_0, -e_0,0$. On the other hand, $\p_y Q$ are orthogonal to $Z^+, Z^-, Q$ (see Lemma 4.9 in \cite{VC1} for more properties of $Z^\pm$) so they are linearly independent. Thus $\det N \neq 0$ and with $z^{in} \gg 1, 0 < \mu^{in} \ll 1$, we have that $\Omega
$ is invertible around $0$. Therefore, for any $a^{in} \in B_{\R^2}(0,(t^{in})^{-\frac{3}{2}})$, we can choose 
$$b = \Omega^{-1}(( a^{in}_1, 0,0, 0,a^{in}_2, 0,0,0)), \qquad||b|| \leq ||\Omega^{-1}||\, |a^{in}|$$
to conclude the lemma.
\end{proof}
We now define the maximal time interval $[T(a^{in}), t^{in}]$ on which the bootstrap bounds \eqref{boot1}--\eqref{boot5} hold and 
\begin{equation}
\|a^\pm(t)\| \leq t^{- \frac{3}{2}}
\end{equation}
for all $t \in [T(a^{in}), t^{in}]$. The uniform backward estimates of Proposition \ref{esti:prop} state that there is a choice of ($\mu^{in}, z^{in}, a^{in}$) with
\begin{equation}
\mu^{in} =\sqrt{\alpha} e^{- z^{in}}, \qquad  z^{in} \gg 1, \qquad a^{in} \in B_{\R^2}(0,(t^{in})^{-\frac{3}{2}})
\end{equation}
such that $T(a^{in}) =t_0$. Indeed, we proceed as for sub-critical cases in Section \ref{sec:sub} and improve all estimates in the bootstrap bounds except those of $a^\pm (t)$ and $z(t)$. Remark that 
\[\epsilon(t^{in}) \lesssim \|b\|\]
so if we choose $a^{in} \in B_{\R^2}(0,(t^{in})^{-\frac{3}{2}})$, by Lemma \ref{lem:mode}, we have $\|\epsilon(t^{in})\|_{H^1}^2 \lesssim t^{-3}$ which is still enough to conclude 
\[\WW (t) \lesssim t^{- \left(\frac{9}{4}+ \frac{1}{8}\right)}\]
from the fact that $\frac{\p}{\p t}\WW(t) \geq - \,C_0 \left( t^{- \frac{9}{8}}\|\epsilon\|_{H^1}^2 + t^{-\frac{9}{4}}\|\epsilon\|_{H^1} \right) \geq - \,C t^{- \left(\frac{9}{4} + \frac{9}{8}\right)}.$ It seems to us that the reasoning to close the bootstrap bound of $z(t)$ still works, in fact, it is, however we will control $a^\pm (t)$ through a suitable value of $a^{in}$ also by a topological argument so we have to choose $(z^{in}, a^{in})$ in the same time. Now we claim the following preliminary estimates on the parameters $a^\pm(t)$.
\begin{lemma}\label{lem:10} For all $t \in [T(a^{in}), t^{in}]$,
\begin{equation}\label{e:a}
\bigg| \frac{d a^\pm}{dt} (t) \mp e_0 a^\pm (t) \bigg| \lesssim ||\epsilon||_{H^1}^2 + t^{-\frac{9}{4}}
\end{equation}
\end{lemma}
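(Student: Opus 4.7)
The plan is to differentiate $a^\pm_k(t) = \int \epsilon(t,y)\tilde{Z}^\pm_k(t,y)\,dy$ in time and carefully isolate the main term $\mp e_0 a^\pm_k$ arising from the eigenvalue identity $L(\p_x Z^\pm) = \pm e_0 Z^\pm$, while showing every remaining contribution is bounded by $\|\epsilon\|_{H^1}^2 + t^{-9/4}$.

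Writing $\dot a^\pm_k = \int \p_t \epsilon\, \tilde Z^\pm_k + \int \epsilon\, \p_t \tilde Z^\pm_k$, I would first substitute equation \eqref{eq:eps} into the first integral and integrate by parts in $y$ to obtain
\[
\int\bigl[\p_y^2\epsilon - \epsilon + |V+\epsilon|^{p-1}(V+\epsilon) - |V|^{p-1}V\bigr]\p_y\tilde Z^\pm_k \;-\; \sum_j\int \vec m_j\cdot\vec M_j\,\tilde Z^\pm_k \;-\; \int E\,\tilde Z^\pm_k .
\]
Then I would Taylor-expand $|V+\epsilon|^{p-1}(V+\epsilon) - |V|^{p-1}V = p|V|^{p-1}\epsilon + O(\epsilon^2)$, and use the localization of $\p_y\tilde Z^\pm_k$ near $z_k$ together with the decomposition $V = \tilde R_k + \text{(other soliton)} + \tilde r$ to replace $p|V|^{p-1}$ by $p\tilde R_k^{p-1}$ at a cost of order $e^{-z}\|\epsilon\|_{L^2}\lesssim t^{-25/8}$. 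Integrating by parts once more and invoking the rescaled eigenvalue identity $L_k(\p_y\tilde Z^\pm_k) = \pm e_0(1+\mu_k)^{3/2}\tilde Z^\pm_k$ (with $L_k f = -f'' + (1+\mu_k)f - p\tilde R_k^{p-1}f$), one finds
\[
\int(\p_y^2\epsilon - \epsilon + p\tilde R_k^{p-1}\epsilon)\p_y\tilde Z^\pm_k = \mp e_0(1+\mu_k)^{3/2}a^\pm_k + \mu_k\int \epsilon\,\p_y\tilde Z^\pm_k ,
\]
the extra $\mu_k$-piece coming from the mismatch between the coefficient $1$ in the linearization and $1+\mu_k$ in $L_k$.

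The key observation, which is also the main obstacle, is that the term $\mu_k\int\epsilon\,\p_y\tilde Z^\pm_k$ is only of size $t^{-17/8}$, worse than the target $t^{-9/4}$. It must be combined with the contribution of $\p_t\tilde Z^\pm_k = -\dot z_k\,\p_y\tilde Z^\pm_k + \dot\mu_k\,(\p Z^\pm_v/\p v)(y-z_k)$: adding the two produces $(\mu_k-\dot z_k)\int\epsilon\,\p_y\tilde Z^\pm_k$, and by \eqref{eq2} we have $|\mu_k-\dot z_k|\lesssim t^{-9/8}$, so the combined quantity is $O(t^{-9/8}\|\epsilon\|_{H^1}) = O(t^{-9/4})$ in the bootstrap regime. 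The $\dot\mu_k$ piece is harmless: $|\dot\mu_k|\lesssim t^{-2}$ gives $O(t^{-25/8})$.

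Finally I would bound the remaining pieces. For the modulation term with $j=k$, the exact identity $\langle \p_y\tilde R_k,\tilde Z^\pm_k\rangle = 0$ (inherited by scaling from $\langle Q',Z^\pm\rangle = 0$) kills the $\p_y\tilde R_k$ component, while $\langle\Lambda\tilde R_k,\tilde Z^\pm_k\rangle = O(1)$ is multiplied by $|\dot\mu_k + \alpha e^{-z}|\lesssim t^{-9/4}$ by Lemma \ref{lem:point}; for $j\neq k$, the solitons are at distance $z$, so both $\langle\Lambda\tilde R_j,\tilde Z^\pm_k\rangle$ and $\langle\p_y\tilde R_j,\tilde Z^\pm_k\rangle$ are $O(ze^{-z})$ by direct exponential-weight computation, yielding $O(t^{-9/8}\cdot t^{-2}\log t)$, negligible. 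The error term gives $|\int E\,\tilde Z^\pm_k| \leq \|E\|_{L^2} \lesssim t^{-9/4}$ by \eqref{eqE}, and the $O(\epsilon^2)$ piece of the Taylor expansion yields $O(\|\epsilon\|_{H^1}^2)$. Assembling every estimate delivers $|\dot a^\pm_k \mp e_0 a^\pm_k|\lesssim \|\epsilon\|_{H^1}^2 + t^{-9/4}$, which is the claim.
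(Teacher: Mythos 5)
Your proposal is correct and follows essentially the same route as the paper: differentiate $a^\pm_k$, substitute the $\epsilon$-equation, rewrite the linear part using the rescaled operator $L_k = -\p_y^2 + (1+\mu_k) - p\tilde R_k^{p-1}$ so that the eigenvalue identity $L_k(\p_y\tilde Z^\pm_k) = \pm e_0(1+\mu_k)^{3/2}\tilde Z^\pm_k$ produces the main term, and estimate the remaining pieces via the modulation bounds \eqref{eq1}, \eqref{eq2}, \eqref{eqE}, the orthogonality $\langle \p_y\tilde R_k,\tilde Z^\pm_k\rangle = 0$, and the exponential separation of the two solitons. You correctly single out, and even highlight more explicitly than the paper does, the key cancellation $(\mu_k - \dot z_k)\int\epsilon\,\p_y\tilde Z^\pm_k$ that must be formed by combining the coefficient mismatch in $L_k$ with the $\dot z_k$ contribution from $\p_t\tilde Z^\pm_k$ — this is precisely what the paper does when it writes the $-(1+\mu_k)\epsilon$ coefficient up front and keeps the $-(\dot z_k - \mu_k)\int\epsilon\,\p_y\tilde Z^\pm_k$ term separately.
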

\begin{proof} [Proof of Lemma ~\ref{lem:10}] Recall the equation of $\epsilon$:
\[
\p_t \epsilon + \p_y \left(\p^2_y \epsilon - \epsilon + |V + \epsilon|^{p-1}(V + \epsilon) - |V|^{p-1}V \right) + \sum_{j=1}^2\vec{m}_j\cdot  \vec{M}_j + E =0
\]
and note that $\int \p_y \tilde{R}_k \tilde{Z}^\pm_k =0$ which follows from
\[\int \p_y Q Z^\pm  = \pm \frac{1}{e_0}\int \p_y Q L(\p_y Z^\pm) = \pm \frac{1}{e_0}\int \p_y L(\p_y Q) Z^\pm =0.\]
Then, we have
\begin{align*}
\frac{d a^\pm_k}{dt} (t)=& \int \p_t\epsilon \tilde{Z}^\pm_k  +\int \epsilon \p_t \tilde{Z}^\pm_k \\
=&\int (\p_y^2\epsilon - (1+\mu_k)\epsilon + p |V|^{p-1}\epsilon)\p_y \tilde{Z}^\pm_k \\
&+ \int \left(|V + \epsilon|^{p-1}(V + \epsilon) - |V|^{p-1}V - p |V|^{p-1}  \epsilon \right)\p_y \tilde{Z}^\pm_k\\
&- (\dot{\mu}_1 +\alpha e^{-z})\int\Lambda \tilde{R}_1 \tilde{Z}^\pm_k + \sigma(\mu_2 - \alpha e^{-z})\int\Lambda \tilde{R}_2 \tilde{Z}^\pm_k - \int E \tilde{Z}^\pm_k\\
&+ \dot{\mu}_k\int \epsilon \Lambda \tilde{Z}^\pm_k - (\dot{z}_k - \mu_k)\int \epsilon \p_y \tilde{Z}^\pm_k.
\end{align*}
Using $Z \in \mathcal{Y}$, for $k\neq j$
\[\int |\tilde{R}_j| (|\tilde{Z}^\pm_k|+|\p_y \tilde{Z}^\pm_k|) \lesssim z^q e^{-z} \]
hence denote $L_k= -f'' +(1+\mu_k)f - p \tilde{R}_k^{p-1}f$, we have
\[\int (\p_y^2\epsilon - (1+\mu_k)\epsilon + p |V|^{p-1}\epsilon)\p_y \tilde{Z}^\pm_k = \int \epsilon L_k (\p_y \tilde{Z}^\pm_k) + O (\|\epsilon\|_{H^1}e^{-\frac{3}{4}z}).\]
Moreover from \eqref{eq1}, \eqref{eq2}, \eqref{eqE}, we get
\begin{equation*}
\begin{gathered}
\left|(\dot{\mu}_1 +\alpha e^{-z})\int\Lambda \tilde{R}_1 \tilde{Z}^\pm_k\right| + \left|\sigma(\mu_2 - \alpha e^{-z})\int\Lambda \tilde{R}_2 \tilde{Z}^\pm_k\right| \lesssim \|\epsilon\|^2_{H^1},\\
\left|\int E \tilde{Z}^\pm_k\right| \lesssim t^{-\frac{9}{4}},\\
\left|\dot{\mu}_k\int \epsilon \Lambda \tilde{Z}^\pm_k\right| + \left|(\dot{z}_k - \mu_k)\int \epsilon \p_y \tilde{Z}^\pm_k\right| \lesssim \|\epsilon\|^2_{H^1}.
\end{gathered}
\end{equation*}
Using the equation of $Z^\pm$ \eqref{eqZ}, we obtain
\[\int \epsilon L_k (\p_y \tilde{Z}^\pm_k) = \pm e_0 (1 + \mu_k)^{\frac{3}{2}}\int \epsilon \tilde{Z}^\pm_k.\]
As $|\mu_k| \lesssim t^{-1}, |a^\pm| \lesssim t^{-\frac{3}{2}}$, we get
\[\frac{d a^\pm_k}{dt} (t)= \pm e_0 a^\pm_k (t)+ O(||\epsilon||_{H^1}^2) + O(t^{-\frac{9}{4}})\]
as required.
\end{proof}
We now control $a^\pm(t)$ through topological arguments by noticing that the direction $a^+(t)$ is already stable. Consider $\zeta(t) = \frac{e^{\frac 1 2 z}}{ \sqrt{\alpha}}$ and $\xi(t)$ as defined in~\eqref{d:zx}.
\begin{lemma} [Control of $a^\pm(t)$]\label{lem:11} There exist $\zeta^{in} = \zeta (t^{in}) \in [t^{in} -  (t^{in})^{-\frac{15}{16}}, t^{in} +  (t^{in})^{-\frac{15}{16}}]$ (in consequence, $z^{in} \gg 1$) and $a^{in} \in B_{\R^2}(0,(t^{in})^{-\frac{3}{2}})$ such that $T(a^{in}) = t_0$.
\end{lemma}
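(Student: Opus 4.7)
The plan is to combine the topological argument from the sub-critical case for the scaling parameter $\zeta^{in}$ with a second topological argument that controls the unstable direction $a^-$. I parametrize the initial data by $(\zeta^\sharp, a^\sharp) \in [-1,1] \times \bar{B}_{\R^2}(0,1)$ via
\[
\zeta(t^{in}) = t^{in} + (t^{in})^{\frac{15}{16}}\zeta^\sharp, \qquad a^{in} = (t^{in})^{-\frac{3}{2}} a^\sharp,
\]
so that the parameter space is homeomorphic to the closed $3$-ball with boundary an $S^2$. First, I would repeat verbatim the bootstrap improvement argument of Section \ref{sec:sub} for the bounds \eqref{boot1}--\eqref{boot3} and \eqref{boot5}: the bound on $\|\epsilon\|_{H^1}$ still closes because Lemma \ref{lem:mode} yields $\|\epsilon^{in}\|_{H^1} \lesssim \|b\| \lesssim \|a^{in}\| \lesssim (t^{in})^{-3/2}$, hence $\WW(t^{in}) \lesssim (t^{in})^{-3}$, which combined with $\partial_t \WW \geq -C_0 (t^{-9/8}\|\epsilon\|_{H^1}^2 + t^{-9/4}\|\epsilon\|_{H^1})$ yields $\|\epsilon(t)\|_{H^1}^2 \lesssim t^{-9/4-1/8}$ as before.

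Next, I would handle the stable direction $a^+$ directly: by Lemma \ref{lem:10}, $\frac{d a^+}{dt} = e_0 a^+ + O(\|\epsilon\|_{H^1}^2 + t^{-9/4})$, and since $a^+(t^{in}) = 0$ by construction, Duhamel's formula backward in time with propagator $e^{e_0(t-t^{in})} \leq 1$ for $t \leq t^{in}$ yields
\[
|a^+(t)| \lesssim \int_t^{t^{in}} e^{-e_0(s-t)} s^{-\frac{9}{4}}\, ds \lesssim t^{-\frac{9}{4}},
\]
strictly improving $\|a^+(t)\| \leq t^{-3/2}$. So only the bounds on $\xi(t) = (\zeta(t)-t)^2 t^{-15/8}$ and on $\eta(t) \defeq \|a^-(t)\|^2 t^3$ remain to be closed by topology.

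The key transversality conditions at the saturation time $T$ are: as in \eqref{ng}, $\dot\xi(T) < -(T)^{-1}$ when $\xi(T) = 1$; and using Lemma \ref{lem:10} one computes
\[
\frac{d}{dt}\|a^-\|^2 = -2e_0\|a^-\|^2 + O(\|a^-\|(\|\epsilon\|_{H^1}^2 + t^{-9/4})),
\]
which at $\eta(T) = 1$ (so $\|a^-(T)\| = T^{-3/2}$) gives $\dot\eta(T) = -2e_0 + O(T^{-3/4}) + 3 T^{-1} < -e_0$ for $t_0$ large enough. Suppose for contradiction that $T(\zeta^\sharp, a^\sharp) > t_0$ for every parameter choice. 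Then at $T$ at least one of $\xi(T)=1$ or $\eta(T)=1$ holds, and I define
\[
\Phi(\zeta^\sharp, a^\sharp) = \bigg(\frac{\zeta(T)-T}{T^{15/16}},\ T^{\frac{3}{2}}\, a^-(T)\bigg) \in \partial([-1,1]\times \bar B_{\R^2}(0,1)) \simeq S^2.
\]
Both transversality conditions, together with continuous dependence of the flow on initial data, make $\Phi$ continuous by the same $\epsilon$--$\delta$ argument used after \eqref{ng} in Section \ref{sec:sub}. On $\partial([-1,1]\times \bar B_{\R^2}(0,1))$ the saturation holds already at $t^{in}$ (in at least one of the two bounds) and transversality forces $T = t^{in}$, so $\Phi$ restricts to the identity on the $S^2$ boundary. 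This gives a continuous retraction $\bar B^3 \to S^2$, contradicting the no-retract theorem.

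The main obstacle is the joint continuity of $\Phi$ at ``corner'' configurations where $\xi(T) = \eta(T) = 1$ simultaneously: here two transverse exit conditions occur at the same instant, and one must argue that the vector $(\dot\xi(T),\dot\eta(T))$ is strictly inward-pointing in both coordinates to guarantee that the exit point varies continuously on the 2-dimensional stratum where the two boundary pieces of $[-1,1]\times \bar B_{\R^2}(0,1)$ meet. Once this is in hand, the retraction argument produces at least one interior parameter $(\zeta^\sharp, a^\sharp)$ for which $T(a^{in}) = t_0$, as claimed.
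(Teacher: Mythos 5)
Your proposal follows essentially the same route as the paper: parametrize $(\zeta^{\sharp},a^{\sharp})$ over the solid cylinder $[-1,1]\times\bar B_{\R^2}(0,1)$, show the stable direction $a^+$ self-closes by backward Duhamel, derive the two strict transversality inequalities $\dot\xi(T)<-T^{-1}$ and $\dot\eta(T)<-e_0$ at exit, and conclude via Brouwer's no-retraction theorem (the paper states it as the fixed-point theorem; these are equivalent). The corner issue you flag is a genuine point of care that the paper's one-line continuity claim skips over, but it is resolved precisely by the observation you make, namely that both transversality inequalities are simultaneously strict, so the exit is non-tangential on every stratum of $\partial\mathbb{D}$ including the codimension-two edges $\{\pm1\}\times\partial B_{\R^2}(0,1)$.
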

\begin{proof}  [Proof of Lemma ~\ref{lem:11}] First of all, we claim that for all $a^{in} \in B_{\R^2}(0,(t^{in})^{-\frac{3}{2}}) $, the following inequality holds for all $ t\in [T(a^{in}), t^{in}]$
\begin{equation}
\left|a^+(t)\right| \leq \frac{1}{2}t^{-\frac{3}{2}}.
\end{equation}
Indeed, it follows the bootstrap bounds, ~\eqref{e:a} and $a^+(t^{in}) =0$ that for all $t \in [T(a^{in}), t^{in}]$
\begin{align*}
|a^+ (t)| &\lesssim  e^{e_0t}\int_t^{t^{in}} e^{- e_0\tau} \tau^{-\frac{9}{4}} d\tau \\
& = \frac{e^{e_0t}}{e_0}[e^{-e_0t}t^{-\frac{9}{4}} - e^{-e_0t^{in}}(t^{in})^{-\frac{9}{4}}] -  \frac{9e^{e_0t} }{4e_0} \int_t^{t^{in}} e^{- e_0\tau} \tau^{-\frac{13}{4}} d\tau\\
&\leq\frac{1}{e_0} t^{-\frac{9}{4}} \leq \frac{1}{2} t^{-\frac{3}{2}}
\end{align*}
for $t_0$ to be large enough. 

Let $\mathbb{D} = [-1,1]\times B_{\R^2}(0,1)$ equipped with the norm $\|(x,y)\|= \max(\|x\|, \|y\|)$. Now we suppose that for all $(\zeta^{\sharp}, a^{\sharp})\in \mathbb{D}$, the choice
\[
\zeta^{in} = t^{in} +  (t^{in})^{-\frac{15}{16}}\zeta^{\sharp},\quad
a^{in} = a^{\sharp}(t^{in})^{-\frac{3}{2}}
\]
gives us $T(a^{in}) =T(\zeta^{\sharp}, a^{\sharp}) \in (t_0, t^{in}]$. Recall that
\begin{equation}\label{xi1}
\dot \xi(t) = 2 (\zeta(t)-t)(\dot \zeta(t) - 1) t^{-\frac{15}{8}}
- \frac{15}{8}(\zeta(t)-t)^{2} t^{-\frac{23}{8}}.
\end{equation}
On the other hand, consider
$$\mathcal{N}(t) = t^{3}\|a^-(t)\|^2$$
then for $t \in (T(\zeta^{\sharp}, a^{\sharp}), t^{in}]$, by the bound on $||\epsilon||_{H^1}^2$ and ~\eqref{e:a}, we have
\begin{align*}
\dot{\mathcal{N}} (t)&= t^{3} \la 3 t^{-1}a^-(t) + 2 \frac{da^-}{dt}(t),a^-(t)\ra\\
&= t^{3} (3t^{-1} - 2 e_0) \|a^-(t)\|^2 + O\left(t^{-\frac{3}{4}}t^{\frac{3}{2}}\|a^-(t)\|\right).
\end{align*}
Therefore, with $t_0$ large enough ($\frac{3}{t_0} < \frac{1}{2}e_0$), we deduce that
\begin{equation}\label{e:N}
\dot{\mathcal{N}} (t) \leq -\frac{3}{2} e_0 \mathcal{N} (t)  + Ct^{-\frac{3}{4}}\sqrt{\mathcal{N}(t)}.
\end{equation}
Denote 
$$\Psi_1 (t) = (\zeta(t)-t)(t)^{\frac{15}{16}},$$
$$\Psi_2 (t) = a^-(t) t^{\frac{3}{2}}.$$
From the definition of $T(a^{in})$ and the continuity of flow, at the limit $T(\zeta^{\sharp}, a^{\sharp})$, we have one of the following situation
\begin{equation}
\Psi_1 (S(\zeta^{\sharp}, a^{\sharp})) = \pm 1, \qquad \Psi_2 \in B_{\R^2}(0,1)
\end{equation}
or
\begin{equation}
\|\Psi_2 (S(\zeta^{\sharp}, a^{\sharp})) \|= 1 \Leftrightarrow \Psi_2 \in \p B_{\R^2}(0,1), \qquad \Psi_1 \in [-1, 1]
\end{equation}
where $\p B_{\R^2}(0,1)$ is the boundary of $B_{\R^2}(0,1)$. Remark that in the first case, we have 
$$\dot{\xi} (T(\zeta^{\sharp}, a^{\sharp})) < - (T(\zeta^{\sharp}, a^{\sharp}))^{-1} < 0$$
and in the second case we have $\mathcal{N}(T(\zeta^{\sharp}, a^{\sharp}))=1$
$$\dot{\mathcal{N}} (T(\zeta^{\sharp}, a^{\sharp})) \leq - e_0 <0.$$
A consequence of the above transversality property is the continuity of $(\zeta^{\sharp}, a^{\sharp}) \mapsto T((\zeta^{\sharp}, a^{\sharp}))$ thus the following map
\begin{displaymath}\begin{array}{ccccc}
\Psi & : & \mathbb{D} & \to & \p \mathbb{D} \\
 & & (\zeta^{\sharp}, a^{\sharp}) & \mapsto & (\Psi_1 (T(\zeta^{\sharp}, a^{\sharp})), \Psi_2 (T(\zeta^{\sharp}, a^{\sharp})))\end{array}\end{displaymath}
is also continuous where $\p \mathbb{D}$ is the boundary of $\mathbb{D}$. Note that if $a^{\sharp} \in \p B_{\R^2}(0,1)$, then from ~\eqref{e:N}, $\dot{\mathcal{N}} (t^{in}) < 0$, we have $T(\zeta^{\sharp}, a^{\sharp}) = t^{in}$ and if $\zeta^{\sharp} =\pm 1$, then from ~\eqref{xi1}, $\dot{\xi}(t^{in}) <0$, we also have $T(\zeta^{\sharp}, a^{\sharp}) = t^{in}$. Thus $\Psi (\zeta^{\sharp}, a^{\sharp}) = (\zeta^{\sharp}, a^{\sharp})$ for all $(\zeta^{\sharp}, a^{\sharp}) \in \p \mathbb{D}$, which means that the restriction of $\Psi$ to the boundary of $\mathbb{D}$ is the identity. But the existence of such a map contradicts the Brouwer fixed point theorem. In conclusion, there exists a final data $(z^{in}, a^{in})$ such that $T(a^{in}) = t_0$, which concludes the proof of Proposition \ref{esti:prop} in super-critical cases.
\end{proof}
\section{Construction of solution}\label{s:3}
Applying Proposition \ref{esti:prop} with $t^{in}= n$ for any $n$ large enough, there exists a solution $u_n(t)$ of \eqref{gkdv} on the interval $[t_0, n]$ whose decomposition 
\[(\Gamma^n(t); \epsilon_n (t)) = ((z_1^n (t), z_2^n(t), \mu_1^n (t) , \mu_2^n (t)); \epsilon_n (t))\]
satisfies the uniform estimates \eqref{esti:uni}. Denote
\[\tilde{N}_n (t,x) =  Q_{1 + \mu_1^n(t)} (x -  t - z_1^n(t)) + \sigma Q_{1+\mu_2^n(t)}(x  - t - z_2^n(t)).\]
From \eqref{closeH:V} $\|V_n (t,x) - \tilde{N}_n (t,x)\|_{H^1} \lesssim t^{-\frac{3}{2}} \log^q (t)$ and \eqref{esti:uni} $\|\epsilon_n (t)\|_{H^1} \leq t^{- \frac{9}{8}}$, we have
\begin{equation}\label{remark1}
\|u_n(t) - \tilde{N}_n (t,x)\|_{H^1} \lesssim t^{- \frac{9}{8}}.
\end{equation}
On the other hand, by setting
\begin{equation}
N(t,x) = Q(x-t- \log(\sqrt{\alpha} t)) + \sigma Q(x-t+\log(\sqrt{\alpha} t))
\end{equation}
we deduce from \eqref{esti:uni} that
\begin{equation}\label{remark2}
\begin{aligned}
&\|\tilde{N}_n (t,x) - N(t,x)\|_{H^1} \lesssim |\mu_1^n (t)|+|\mu_1^n (t)|+|z_1^n (t) - \log (\sqrt{\alpha} t)|+|z_2^n (t) + \log (\sqrt{\alpha} t)|\\
\lesssim & \left| \frac{\bar{\mu}(t) + \mu(t)}{2} \right|+\left| \frac{\bar{\mu}(t) + \mu(t)}{2} \right|+ \left| \frac{\bar{z}(t) + z(t)}{2} - \log (\sqrt{\alpha} t)\right| + \left| \frac{\bar{z}(t) - z(t)}{2} - \log (\sqrt{\alpha} t)\right|\\
\lesssim & \,\,t^{- \frac{1}{16}}.
\end{aligned}
\end{equation}
Therefore, there exist a sequence of backward solutions $u_n \in \mathcal{C} ([t_0, n], H^1)$ of \eqref{gkdv} such that for all $t \in [t_0 , n],$
\begin{equation}\label{esti:main}
\|u_n(t)- N(t,x)\|_{H^1} \lesssim t^{- \frac{1}{16}}.
\end{equation}
\begin{remark}\label{remark5} We see that the size of the extra term $\tilde{r} (t,y)$ in the definition  \eqref{V0:eq} of the approximate solution $V(t,y)$ is much smaller than the estimate on $\epsilon(t,y)$. However, by Lemma \ref{lem:point}, the term
\[\tilde{r} (t,y)=e^{- z(t)}\left(A_1(y - z_1(t)) + A_2 (y - z_2(t))\right)\varphi (t,y)\]
improves the computation of the error to the flow \eqref{flow} to obtain \eqref{eqE}. This refinement is essential to close the bootstrap \eqref{boot3} on $\epsilon(t,y)$, see Proposition \ref{prop:energy}, since without it, one would obtain $E$ of size $t^{-2}$ and $\epsilon$ of size $t^{-1}$, which is not sharp enough to exploit the modulation equations \eqref{eq1}, \eqref{eq2}.
\end{remark}
Next, we  construct a function $u_0 \in H^1 (\RR)$ as a strong limit of a subsequence of $u_n(t_0)$.
\begin{lemma}\label{l:comp}
There exist $u_0 \in H^1 (\RR)$ and a sub-sequence, still denoted $u_n$, such that
$$u_n (t_0) \rightharpoonup u_0 \mbox{ weakly in } H^1(\RR)$$
$$u_n (t_0) \rightarrow u_0 \mbox{ in } H^\sigma(\RR), \mbox{ for } 0 \leq \sigma <1$$
as $n \to \infty$.
\end{lemma}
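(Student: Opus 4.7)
The plan is a standard compactness argument. From the uniform estimate \eqref{esti:main} applied at $t = t_0$, we have
\[
\|u_n(t_0)\|_{H^1} \leq \|N(t_0)\|_{H^1} + C t_0^{-1/16}
\]
uniformly in $n$, so the sequence $\{u_n(t_0)\}_n$ is bounded in the reflexive Hilbert space $H^1(\RR)$. Sequential weak compactness (Banach--Alaoglu) then yields a subsequence, still denoted $\{u_n(t_0)\}$, converging weakly to some $u_0 \in H^1(\RR)$, with $\|u_0\|_{H^1} \le \liminf \|u_n(t_0)\|_{H^1}$ by lower semicontinuity.

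For the strong $H^\sigma$ convergence with $0 \le \sigma < 1$, I would apply the Rellich--Kondrachov compact embedding $H^1([-k,k]) \hookrightarrow H^\sigma([-k,k])$. A diagonal extraction over $k \in \NN$ produces a further subsequence such that $u_n(t_0) \to u_0$ strongly in $H^\sigma([-k,k])$ for every fixed $k$.

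To pass from this local convergence to convergence on the full line, I would exploit the uniform spatial localization inherited from the soliton ansatz: since $N(t_0,\cdot)$ decays exponentially as $|x - t_0| \to \infty$ by \eqref{asyQ}, and $\|u_n(t_0) - N(t_0)\|_{H^1}$ is small uniformly in $n$ by \eqref{esti:main}, the $L^2$-tail of $u_n(t_0)$ outside a large interval around $t_0$ is controlled uniformly in $n$. Combining this tightness with the local strong convergence on each compact set, and then invoking the interpolation inequality $\|f\|_{H^\sigma} \lesssim \|f\|_{L^2}^{1-\sigma} \|f\|_{H^1}^\sigma$ together with the uniform $H^1$ bound, one obtains strong convergence $u_n(t_0)\to u_0$ in $H^\sigma(\RR)$ for every $\sigma \in [0,1)$. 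The only mildly delicate step is the tail control on $\RR$, which is routine given the localization of $N(t_0)$ already established; the remainder is a direct application of weak compactness and Rellich--Kondrachov.
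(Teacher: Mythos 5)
Your outline is correct in structure (weak compactness, local strong convergence, tail control, interpolation), and the first three and the last of those four are fine. The genuine gap is in the tail-control step, which is the heart of the lemma. You claim that the uniform $L^2$-tightness of $u_n(t_0)$ follows from the localization of $N(t_0)$ together with the bound $\|u_n(t_0)-N(t_0)\|_{H^1}\lesssim t_0^{-1/16}$. But this only gives
\[
\int_{|x|>K}|u_n(t_0,x)|^2\,dx \;\lesssim\; t_0^{-1/8} + \int_{|x|>K}|N(t_0,x)|^2\,dx,
\]
and the first term on the right is a \emph{fixed} positive number once $t_0$ is fixed; it does not tend to zero as $K\to\infty$, nor as $n\to\infty$. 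So you cannot make the tail arbitrarily small this way, and strong $L^2$ (hence $H^\sigma$) convergence on all of $\RR$ does not follow: the defect of compactness could escape to infinity with size $\sim t_0^{-1/16}$.

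The paper closes exactly this gap with a propagation argument that you omitted. Fix $\delta_1>0$, choose a \emph{later} time $t_1>t_0$ so large that $\|u_n(t_1)-N(t_1)\|_{H^1}\lesssim t_1^{-1/16}<\sqrt{\delta_1}$ for $n>t_1$; then $u_n(t_1)$ has $L^2$-tail $<5\delta_1$ outside $|x|>K_2$ for suitable $K_2$. One then transports this smallness from $t_1$ back to $t_0$ by differentiating $\int |u_n(t)|^2\, g\bigl(\tfrac{|x|-K_2}{\gamma_1}\bigr)dx$ in time (a Kato-type flux identity) and observing that the flux contribution is $\lesssim \gamma_1^{-1}\sup_t\|u_n(t)\|_{H^1}^2\,(t_1-t_0)$, which is $<\delta_1$ once $\gamma_1\gtrsim \delta_1^{-1}(t_1-t_0)$. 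This trades a large spatial cut-off radius against the length of the time interval, and it is what makes the tail at $t_0$ arbitrarily small uniformly in $n$. Without this dynamical monotonicity step the argument does not close; the remainder of your proposal (weak $H^1$ limit, Rellich--Kondrachov on compacts, interpolation $\|f\|_{H^\sigma}\lesssim\|f\|_{L^2}^{1-\sigma}\|f\|_{H^1}^\sigma$) is exactly as in the paper.
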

\begin{proof} [Proof of Lemma ~\ref{l:comp}]
By the bounds on $u_n$ and interpolation, it is enough to prove that the sub-sequence $u_n(t_0) \xrightarrow{L^2} u_0$ as $n\to \infty$. First, we claim the following: $\forall \delta_1 > 0, \delta_1 \ll 1$, $\exists n_0 \gg 1$, $ \exists K_1 = K_1(\delta_1) >0$ such that $\forall n \geq n_0$
\begin{equation}\label{claim}
\int_{|x| > K_1} |u_n(t_0, x)|^2 dx < \delta_1.
\end{equation}
Indeed, by \eqref{esti:main}, we have, for all $n$
\[\|u_n(t)- N(t,x)\|_{H^1} \lesssim t^{- \frac{1}{16}}.\]
A direct consequence of the above estimate is
\begin{equation}\label{b:un}
||u_n(t)||_{H^1} < C
\end{equation}
for all $t \in [t_0, n]$ since $||N(t)||_{H^1} \leq 2 ||Q||_{H^1} $. Furthermore, for fixed $\delta_1$, there exists $t_1 > t_0$ such that 
$$||u_n(t_1) - N(t_1)||_{H^1} \lesssim (t_1)^{-\frac{1}{16}} < \sqrt{\delta_1} $$
for $n$ large enough that $n > t_1$; in others words, we have
$$\int |u_n(t_1,x) - N(t_1,x)|^2 dx < \delta_1.$$
Besides, for $K_2 \gg 1$ large enough we have
$$\int_{|x| > K_2} |N(t_1,x)|^2 dx < \delta_1.$$
Consider now a $\mathcal{C}^1$ cut-off function $g : \RR \to [0,1]$ such that : $g \equiv 0$ on $(- \infty, 1]$, $0 < g' < 2$ on $(1,2)$ and $g \equiv 1$ on $[2, + \infty)$. Since $||u_n(t)||_{H^1} < C$ bounded in $H^1$ independently of $n$ and $t \in [t_0, n]$, we can choose $\gamma_1 > 0$ independent of $n$ such that
$$\gamma_1 \geq \frac{2}{\delta_1} (t_1-t_0) C^2.$$
We have by direct calculations, for $t \in [t_0, n]$
\begin{align*}
\left| \frac{d}{dt} \int |u_n(t, x)|^2 g\left( \frac{|x| -K_2}{\gamma_1} \right) dx \right|& = \left| \frac{1}{\gamma_1} \im \int u \left( \nabla \bar{u}\cdot \frac{x}{|x|} \right) g' \left( \frac{|x| - K_2}{\gamma_1} \right) \right|\\
&\leq \frac{2}{\gamma_1} \sup_{n \geq t\geq t_0}||u_n(t)||_{H^1}^2 \leq \frac{\delta_1}{t_1 - t_0}.
\end{align*}
By integration from $t_0$ to $t_1$
\begin{align*}
\int |u_n(t_0, x)|^2 g\left( \frac{|x| -K_2}{\gamma_1} \right) dx - \int |u_n(t_1, x)|^2 g\left( \frac{|x| -K_2}{\gamma_1} \right) dx \\\leq \int_{t_0}^{t_1} \left| \frac{d}{dt} \int |u_n(t, x)|^2 g\left( \frac{|x| -K_2}{\gamma_1} \right) dx \right| \leq \delta_1 .
\end{align*}
From the properties of $g$ we conclude:
\begin{align*}
&\int_{|x| > 2 \gamma_1 + K_2} |u_n(t_0, x)|^2 dx \leq \int |u_n(t_0, x)|^2 g\left( \frac{|x| -K_2}{\gamma_1} \right) dx \\\leq &\int |u_n(t_1, x)|^2 g\left( \frac{|x| -K_2}{\gamma_1} \right) dx + \delta_1 \leq \int_{|x| >  K_2} |u_n(t_1, x)|^2 dx + \delta_1 \leq 5 \delta_1 .
\end{align*}
Thus ~\eqref{claim} is proved. As $||u_n(t_0)||_{H^1} < C$, there exists a subsequence of $(u_n)$ (still denoted by $(u_n)$)
and $u_0\in H^1$ such that 
\[\begin{aligned}
&u_n(t_0) \rightharpoonup u_0 \quad \hbox{weakly in $H^1(\RR)$,}\\
&u_n(t_0) \rightarrow u_0 \quad \hbox{ in $L^2_{loc}(\RR)$, as $n\to +\infty$}
\end{aligned}\]
and by ~\eqref{claim}, we obtain that $u_n(t_0) \xrightarrow{H^\sigma} u_0$, for $0\leq \sigma <1$.
\end{proof}
To conclude the proof of Main Theorem, we consider $u(t)$ the solution of \eqref{gkdv} corresponding to $u (t_0) = u_0$. From \cite{KPV}, we have the continuous dependence of the solution upon the initial data, so for all $t \in [t_0, + \infty)$,
\[u_n(t) \to u(t) \quad \mbox{ in } H^\sigma (\RR),\quad s_c \leq \sigma <1\]
\[u_n(t) \rightharpoonup u(t)\quad \mbox{ in } H^1 (\RR).\]
where $s_c<1$ is the critical exponent. Thus, from \eqref{esti:main}, $\|u_n(t)- N(t,x)\|_{H^1} \lesssim t^{- \frac{1}{16}}$, passing to the weak limit as $n\to + \infty$, we have
\[\|u(t)- N(t,x)\|_{H^1} \leq C t^{- \frac{1}{16}}.\]
Therefore, recall the value of $\alpha$ given in \eqref{eq:alpha}, we have constructed a solution $u(t)$ satisfying the conclusion of Main Theorem.

\subsection*{Acknowledgements}  
This paper has been prepared as a part of my PhD under the supervision of Y.~ Martel. I would like to express my gratitude for his constant support and many helpful discussions.
\\


\begin{thebibliography}{10}
\bibitem{CL}
T.~Cazenave and P.~L.~Lions.
 Orbital stability of standing waves for some nonlinear Schr\"odinger equations.
 {\em Comm. Math. Phys.}, 85 (1982), no. 4, 549 -- 561.
\bibitem{CMM}
R.~C\^ote, Y.~ Martel and F.~Merle.
 Construction of multi-soliton solutions for the $L^2$-supercritical gKdV and NLS equations.
{\em Rev. Mat. Iberoam.} 27 (2011), no. 1, 273--302.
\bibitem{VC1}
V.~Combet.
 Construction and characterization of solutions converging to solitons for supercritical gKdV equations.
 {\em Diff. Int. Eqs.}, 23 (2010), 513 -- 568.
\bibitem{VC2}
V.~Combet.
 Multi-soliton solutions for the supercritical nonlinear gKdV equations.
 {\em Comm. Partial Differential Equations}, 36 (2011), 380 -- 419.
 \bibitem{DM}
T.~ Duyckaerts and F.~Merle.
 Dynamic of threshold solutions for energy-critical NLS.
{\em Geom. Funct. Anal.} 18 (2009), no. 6, 1787--1840. 
\bibitem{GSS}
M.~Grillakis, J.~Shatah and W.~A.~Strauss.
 Stability theory of solitary waves in the presence of symmetry.
 {\em J. Funct. Anal.}, 197 (1987), 74 -- 160.
\bibitem{KPV}
C. ~E. ~Kenig, G.~ Ponce and L.~Vega.
 Well-posedness and scattering results for the generalized Korteweg-de Vries equation via the contraction principle.
{\em Comm. Pure Appl. Math.} 46 (1993), 527--602.
\bibitem{KMR}
J.~Krieger, Y.~Martel and P.~Rapha\"el.
 Two-soliton solutions to the three-dimensional gravitational Hartree equation.
{\em Comm. Pure Appl. Math.} 62 (2009), no. 11, 1501--1550.
\bibitem{L}
Z.~W.~Lin.
 Instability of nonlinear dispersive solitary waves.
{\em J. Funct. Anal.} 255 (2008), 1191--1224.
\bibitem{Martel1}
Y.~Martel.
 Asymptotic N-soliton-like solutions of the subcritical and critical generalized Korteweg-de Vries equations.
{\em Am. J. Math} 127 (2005), no. 5, 1103--1140. 
\bibitem{MMarch} 
Y.~Martel and F.~Merle.
Asymptotic stability of solitons for sub-critical generalized Korteweg-de Vries equations.
 {\em   Arch. Ration. Mech. Anal.}, 157 (2001), no. 3, 219--254.
 \bibitem{MMannal} 
Y.~Martel and F.~Merle.
Description of two soliton collison for the quartic gKdV equation.
 {\em   Ann. of Math.}, 174 (2011), 757--857.
\bibitem{MaMeinvent}
Y.~Martel and F.~Merle.
 Inelastic interaction of nearly equal solitons for the quartic gKdV equation.
{\em Invent. Math.} 183 (2011), no. 3, 563--648. 
\bibitem{MMR1}
 Y.~Martel, F.~Merle and P.~Rapha\"el.
  Blow up for the critical generalized Korteweg de Vries equation. I: Dynamics near the soliton.
 {\em  Acta Math.} 212 (2014), no. 1, 59--140.
 \bibitem{MMT2} 
Y. Martel, F. Merle and T.-P. Tsai.
 Stability and asymptotic stability in the energy space of the sum of $N$ solitons for the subcritical gKdV equations.
{\em Comm. Math. Phys.}  {231} (2002), 347--373.
\bibitem{MRlog}
 Y.~Martel and P.~Raphael.
  Strongly interacting blow up bubbles for the mass critical NLS. To appear in {\em Ann. Sci. Ecole Norm. S.}.
\bibitem{Mmulti}
F.~ Merle.
 Construction of solutions with exactly $k$ blow-up points for the Schr\"odinger equation with critical nonlinearity. {\em Comm. Math. Phys.} 129 (1990), no. 2, 223--240.
\bibitem{Mi03}
T.~Mizumachi.
 Weak interaction between solitary waves of the generalized KdV equations.
{\em SIAM J. Math. Anal.} 35 (2003), no. 4, 1042--1080.
  \bibitem{NV1}
T.~V. Nguyen.
 Existence of multi-solitary waves with logarithmic relative distances for the NLS equations.
 {\em Preprint}.
\bibitem{Olm}
E.~Olmedilla.
 Multiple pole solutions of the nonlinear Schr\"odinger equation.
{\em Physica D.} 25 (1987), 330--346.
\bibitem{PW}
 R.~L.~Pego and M.~I.~Weinstein.
  Asymptotic stability of solitary waves. {\em Comm. Math. Phys.} 164 (1992), 305--349.
  \bibitem{PW2}
 R.~L.~Pego and M.~I.~Weinstein.
  Eigenvalues and instabilities of solitary waves.  {\em Philos. Trans. Roy. Soc London Ser.} 340 (1994), no. 1656, 47--94.
\bibitem{RaSz11}
P.~Rapha\"el and J.~Szeftel.
 Existence and uniqueness of minimal blow-up solutions to an
  inhomogeneous mass critical {NLS}.
 {\em J. Amer. Math. Soc.}, 24(2):471--546, 2011. 
\bibitem{PCS}
P.C.~Schuur.
 Asymptotic analysis of soliton problems, an inverse scattering approach.
 {\em Lecture Notes in Math.}, vol. 1232, Springer-Verlag, Berlin, 1986.
 \bibitem{OW}
M.~Watida and K.~Ohkuma.
 Multiple-pole solutions of modified Korteweg-de Vries  equation.
{\em J. Phys. Soc. Jpn.} 51 (1982), no. 6, 2029--2035.
 \bibitem{TW}
M.~Wadati and M.~Toda.
 The exact N-soliton solution of the Korteweg-de Vries equation.
 {\em J. Phys. Soc. Japan}, 32(1972) 1403--1411.
  \bibitem{We86}
M.~I. Weinstein.
 Lyapunov stability of ground states of nonlinear dispersive evolution equations.
 {\em Comm. Pure Appl. Math.}, 39 (1986), 51--68.
\end{thebibliography}
\end{document}